\theoremstyle{plain}
\newtheorem{theorem}{Theorem}[section]
\newtheorem{lemma}[theorem]{Lemma}
\theoremstyle{definition}
\newtheorem{remark}[theorem]{Remark}
\newtheorem{definition}[theorem]{Definition}
\newtheorem{example}[theorem]{Example}
\numberwithin{equation}{section}
\def\be{\begin{equation}}
\def\ee{\end{equation}}
\begin{document}

\title[The Loewner-Nirenberg Problem in Singular Domains]
{The Loewner-Nirenberg Problem in Singular Domains}
\author[Han]{Qing Han}
\address{Beijing International Center for Mathematical Research\\
Peking University\\
Beijing, 100871, China} \email{qhan@math.pku.edu.cn}
\address{Department of Mathematics\\
University of Notre Dame\\
Notre Dame, IN 46556} \email{qhan@nd.edu}
\author[Shen]{Weiming Shen}
\address{School of Mathematical Sciences\\
Peking University\\
Beijing, 100871, China}
\email{wmshen@pku.edu.cn}
\address{Beijing International Center for Mathematical Research\\
Peking University\\
Beijing, 100871, China}

\begin{abstract}
We study the asymptotic behaviors of solutions of the Loewner-Nirenberg problem
in singular domains and prove that the solutions are well approximated by the
corresponding solutions in tangent cones at singular points on the boundary.
The conformal structure of the underlying equation plays an essential role in the derivation of
the optimal estimates.
\end{abstract}

\thanks{The first author acknowledges the support of NSF
Grant DMS-1404596. }
\maketitle

\section{Introduction}\label{sec-Intro}

Assume $\Omega\subset \mathbb{R}^{n}$ is a domain, for  $n\ge 3$. We consider the following
problem:
 \begin{align}
\label{eq-MainEq} \Delta u  &= \frac14n(n-2) u^{\frac{n+2}{n-2}} \quad\text{in }\Omega,\\
\label{eq-MainBoundary}u&=\infty\quad\text{on }\partial \Omega.
\end{align}
This is the so-called Loewner-Nirenberg problem,
also known as the singular Yamabe problem.
For a large class of domains $\Omega$, \eqref{eq-MainEq} and \eqref{eq-MainBoundary}
admit a unique positive solution $u\in C^\infty(\Omega)$.
Geometrically,  $u^{\frac{4}{n-2}}\sum_{i=1}^{n}dx_i\otimes dx_i$ is
a complete metric with the constant scalar curvature $-n(n-1)$ on $\Omega$.

The two dimensional counterpart
is given by, for $\Omega\subset\mathbb R^2$,
\begin{align}
\label{eq-MainEq-HigherDim} \Delta{u} = e^{ 2u } \quad\text{in }\Omega.
\end{align}
More generally, we can study, for a function $f$,
\begin{align*}
\label{eq-MainEq-HigherDim-general} \Delta{u} = f(u)\quad\text{in }\Omega.
\end{align*}

For bounded domains $\Omega$, let $d$ be the distance function to $\partial\Omega$.
If $\partial\Omega$ is $C^2$, then $d$ is
a $C^2$-function near $\partial\Omega$. In a pioneering work,
Loewner and Nirenberg \cite{Loewner&Nirenberg1974} studied asymptotic behaviors
of solutions of \eqref{eq-MainEq}  and \eqref{eq-MainBoundary} and proved, for $d$ sufficiently small,
\begin{equation}\label{eq-EstimateDegree1}|d^{\frac{n-2}{2}}u-1|\le Cd, \end{equation}
where $C$ is a positive constant depending only on certain geometric quantities of $\partial\Omega$.
This follows from a comparison of $u$ and the corresponding solutions
in the interior and exterior tangent balls.
This result has been generalized to more
general $f$ and up to higher order terms, for example, by
Brandle and Marcus \cite{Brandle&Marcus1995},
Diaz and Letelier \cite{DiazLetelier1992}, and  Kichenassamy \cite{Kichenassamy2005JFA}.
All these results require $\partial\Omega$ to have
some degree of regularity. The case where $\partial\Omega$ is singular was
studied by del Pino and Letelier \cite{delPino2002},
and Marcus and Veron \cite{Marcus&Veron1997}.
However, there are no explicit estimates in neighborhoods of singular boundary points in these works.

In \cite{HanShen}, we studied the asymptotic behaviors of solutions of \eqref{eq-MainEq-HigherDim}
and \eqref{eq-MainBoundary}
in planar singular domains, and proved that these solutions are well approximated by the
corresponding solutions in tangent cones near isolated singular points on the boundary.
Based on a combination of conformal transforms and the maximum principle, we derived an
optimal estimate.

In this paper, we study the
asymptotic behaviors of solutions of \eqref{eq-MainEq}  and \eqref{eq-MainBoundary}
near singular points on $\partial\Omega$. Similarly as in
\cite{HanShen}, we prove that the solutions
of \eqref{eq-MainEq}  and \eqref{eq-MainBoundary} can be approximated by the
corresponding solutions in tangent cones at singular points on the boundary.

Presumably, it is more difficult to discuss solutions of
\eqref{eq-MainEq}  and \eqref{eq-MainBoundary} for $n\ge 3$ than those
of \eqref{eq-MainEq-HigherDim}  and \eqref{eq-MainBoundary} for $n=2$, for several reasons.
First, the conformal invariance of domains is more restrictive for $n\ge3$. For example,
cones are not conformal to each other unless they are conjugate.
Second, there are no explicit solutions of \eqref{eq-MainEq}  and \eqref{eq-MainBoundary}
in cones in general. Third, the type of the boundary singularity is more diverse.
We need to introduce new techniques to address these issues.

Our main result in this paper is given by the following theorem.

\begin{theorem}\label{main reslut}
Let $\Omega\subset \mathbb{R}^{n}$ be a bounded Lipschitz domain with
$x_0\in\partial\Omega$ and, for some integer $k\le n$,  let $\partial\Omega$ in a neighborhood of
$x_0$ consist of
$k$ $C^{1,1}$-hypersurfaces $S_1, \cdots, S_k$ intersecting at $x_0$ with the property that the normal vectors
of $S_1, \cdots, S_k$ at $x_0$ are linearly independent. Suppose $ u \in
C^{\infty}(\Omega)$ is  a solution of \eqref{eq-MainEq}-\eqref{eq-MainBoundary},
and $u_{V_{x_0}}$ is the corresponding solution in the tangent cone $V_{x_0}$
of $\Omega$ at $x_0$.
Then, there exist a constant $r$ and a 
$C^{1,1}$-diffeomorphism $T$:
$B_r (x_0)\rightarrow T(B_r(x_0))\subseteq\mathbb R^n$, with
$T(\Omega \bigcap B_r (x_0))= V_{x_0}\bigcap T(B_r(x_0))$ and
$T(\partial\Omega \bigcap B_r (x_0))= \partial V_{x_0}\bigcap T(B_r(x_0))$, such that,
for any $x\in B_{r/2}(x_0)$,
\begin{align}
\label{main-estimate} \left|\frac{u(x)}{u_{V_{x_0}}(Tx)}-1\right|\leq C|x-x_0|,
\end{align}
where $C$ is a positive constant depending only on $n$
and the geometry of $\partial\Omega$.
\end{theorem}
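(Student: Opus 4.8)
The plan is to reduce the problem to a comparison argument between $u$ near $x_0$ and $u_{V_{x_0}}$ near the vertex of the tangent cone, after straightening the boundary with a $C^{1,1}$-diffeomorphism $T$. First I would construct $T$: near $x_0$ each hypersurface $S_i$ is a $C^{1,1}$-graph over its tangent hyperplane $P_i = T_{x_0}S_i$, and since the normals of $S_1,\dots,S_k$ are linearly independent one can write down a map that sends each $S_i$ to $P_i$ while fixing the transversal directions, and verify it is a $C^{1,1}$-diffeomorphism on a small ball $B_r(x_0)$ carrying $\Omega\cap B_r(x_0)$ onto $V_{x_0}\cap T(B_r(x_0))$; here $V_{x_0}$ is the (generalized) cone cut out by the half-spaces bounded by the $P_i$. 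The key point is that $T = \mathrm{id} + O(|x-x_0|^2)$ in $C^{1}$, so the pushed-forward Euclidean metric $g := T_*(\sum dx_i^2)$ satisfies $g = \sum dy_i^2 + O(|y-y_0|)$, i.e. it is a $C^{0,1}$-perturbation of the flat metric, uniformly controlled by the $C^{1,1}$-norms of the $S_i$.

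Next I would exploit the conformal structure. Writing $\tilde u := (u\circ T^{-1})\cdot|\det DT^{-1}|^{\frac{n-2}{2n}}$ or, more cleanly, transplanting the conformal metric $u^{\frac{4}{n-2}}\sum dx_i^2$ by $T$, the function $v := u\circ T^{-1}$ solves the equation $L_g v = \tfrac14 n(n-2) v^{\frac{n+2}{n-2}}$ in $V_{x_0}\cap T(B_r(x_0))$ with $v=\infty$ on the boundary, where $L_g$ is the conformal Laplacian of $g$; since $g$ is a small perturbation of the flat metric this is a small perturbation of \eqref{eq-MainEq}. Meanwhile $u_{V_{x_0}}$ solves the genuinely flat equation on $V_{x_0}$. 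The strategy is then to compare $v$ and $u_{V_{x_0}}$ by the maximum principle on the region $V_{x_0}\cap B_{cr}(y_0)$: one builds super- and sub-solutions of the form $(1\pm C|y-y_0|)\, u_{V_{x_0}}$, uses that both functions blow up like $\operatorname{dist}(\cdot,\partial V_{x_0})^{-\frac{n-2}{2}}$ near the smooth part of the boundary so the boundary comparison on $\partial V_{x_0}$ is automatic, and controls the comparison on the inner sphere $\partial B_{cr}(y_0)$ by the already-known Loewner–Nirenberg estimate \eqref{eq-EstimateDegree1} away from the vertex together with crude barriers. Plugging the ansatz $(1\pm C|y-y_0|)u_{V_{x_0}}$ into $L_g - \tfrac14 n(n-2)(\cdot)^{\frac{n+2}{n-2}}$, the error terms are of two types: the $O(|y-y_0|)$-discrepancy between $g$ and the flat metric acting on $u_{V_{x_0}}$, and the genuine second-order/zeroth-order terms generated by the factor $(1\pm C|y-y_0|)$; the conformal covariance is what makes the former term gain the crucial extra power of $|y-y_0|$ relative to the size of $L_{\mathrm{flat}}u_{V_{x_0}}$, so that for $C$ large (depending only on $n$ and the geometry) the ansatz is a genuine super/sub-solution on $B_{cr}(y_0)\setminus\{y_0\}$. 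The maximum principle then yields $|v/u_{V_{x_0}}-1|\le C|y-y_0|$, which is \eqref{main-estimate} after undoing $T$ and noting $|Tx - x_0|$ and $|x-x_0|$ are comparable.

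The main obstacle I expect is the behavior at the vertex $y_0$ itself, where both $u_{V_{x_0}}$ and the barriers are singular and the perturbation of the metric, while small in $C^0$, is not differentiable. One must show the comparison region can be taken to include a punctured neighborhood of $y_0$ and that no boundary term at $y_0$ is lost — i.e. that the singularity of $u_{V_{x_0}}$ at the vertex is "removable" for the maximum principle in the sense that a barrier which dominates near $\partial V_{x_0}$ and on $\partial B_{cr}$ and solves the inequality on the punctured ball automatically dominates on the whole punctured ball. This requires a quantitative lower bound $u_{V_{x_0}}(y) \ge c\,|y-y_0|^{-\frac{n-2}{2}}$ and an upper bound of the same order near the vertex (both following from comparison with solutions in sub- and super-cones, using that $V_{x_0}$ is squeezed between two honest cones because the normals are independent), plus the observation that any harmless additive error blows up slower than $|y-y_0|^{-\frac{n-2}{2}}$ and hence cannot violate the inequality in the limit $y\to y_0$. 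A secondary technical point is the lack of an explicit formula for $u_{V_{x_0}}$ in the cone; this is handled by the homogeneity $u_{V_{x_0}}(\lambda y) = \lambda^{-\frac{n-2}{2}} u_{V_{x_0}}(y)$ (valid because $V_{x_0}$ is an exact cone) together with elliptic estimates on the link, which give all the derivative bounds on $u_{V_{x_0}}$ needed to run the barrier computation.
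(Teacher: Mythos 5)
Your approach is genuinely different from the paper's, and I believe it has a gap that is not cosmetic.

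Both arguments begin by building a $C^{1,1}$-diffeomorphism $T$ carrying $\Omega$ to $V_{x_0}$ near $x_0$, and your construction (sending each $S_i$ to $P_i$) is in the same spirit as the paper's, which uses signed distances $(d_1,\dots,d_k)$. The divergence is in the main estimate. You propose to push the equation forward by $T$ and compare $v=u\circ T^{-1}$ to $u_{V_{x_0}}$ on a truncated cone $V_{x_0}\cap B_{cr}$ by the maximum principle, using $(1\pm C|y|)\,u_{V_{x_0}}$ as barriers. The paper never works on a truncated cone. Instead, for each fixed $x$ it places interior and exterior tangent balls at the nearest boundary points $p_i\in S_i$, assembles sets $\widetilde B\subseteq\Omega\subseteq\widehat B$ from them, and then uses the explicit M\"obius map \eqref{eq-ConformalTransform} to carry $\widetilde B$ and $\widehat B$ onto \emph{infinite} cones $\widetilde V$, $\widehat V$, where the exact scaling $u(\lambda y)=\lambda^{-(n-2)/2}u(y)$ holds and there is no interior cutoff sphere. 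The anisotropic gradient estimate (Lemma \ref{lemma-AnisotropicEstimate}) and the cone comparison (Theorem \ref{slu-k n1-n2 under tr-general}) then convert the resulting bounds into the statement about $f_{V_{x_0}}(d_1,\dots,d_k)$.

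The gap in your proposal is at the vertex, and you correctly identify it as ``the main obstacle,'' but the resolution you sketch does not hold up. Running the maximum principle on $V_{x_0}\cap B_{cr}$ requires some sacrificial additive term to make the boundary comparison legitimate: the standard device is to compare $v$ with the barrier plus $u_{B_{cr}}$ (the ball solution), which is $+\infty$ on $\partial B_{cr}$ and lets the comparison on $\partial V_{x_0}$ go through by Lemma \ref{lemma-supersution}. But then at a point $y$ with $|y|\ll cr$ one is left with an error $u_{B_{cr}}(y)\sim (cr)^{-(n-2)/2}$, while the target error is $|y|\,u_{V_{x_0}}(y)\sim |y|\cdot|y|^{-(n-2)/2}$ on a central ray. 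The ratio is $(cr)^{-(n-2)/2}|y|^{(n-4)/2}$, which is not $O(1)$ as $y\to 0$ for $n\ge 3$; optimizing over the cutoff radius yields only a H\"older exponent strictly less than $1$. Your claim that ``any harmless additive error blows up slower than $|y-y_0|^{-(n-2)/2}$'' is exactly what fails here: by Lemma \ref{lemma-ExteriorCone} both $v$ and $u_{V_{x_0}}$ blow up at \emph{the same} rate $d^{-(n-2)/2}$, and on a central ray $d\sim|y|$, so a priori the difference $v-(1+C|y|)u_{V_{x_0}}$ is of order $|y|^{-(n-2)/2}$, not smaller. This is precisely the obstruction the paper removes by sending the vertex to infinity via the conformal transform, where scaling replaces the inner-boundary estimate. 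I would also flag that the assertion that ``conformal covariance gains the crucial extra power of $|y-y_0|$'' is not substantiated: since $T$ is not conformal, $g=T_*\delta$ is not conformally flat, the deviation $g-\delta$ is $O(|y|)$ regardless of whether one uses $\Delta_g$ or $L_g$, and $R_g$ involves second derivatives of the merely Lipschitz metric $g$, so it is not even a function. The cleaner formulation is the equation $a_{ij}v_{ij}+b_iv_i=\frac14 n(n-2)v^{(n+2)/(n-2)}$ with $a_{ij}$ Lipschitz and $b_i\in L^\infty$, but this does not produce the extra power either.
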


The estimate \eqref{main-estimate} generalizes \eqref{eq-EstimateDegree1}
to singular domains and is optimal. The power one of the
distance in the right-hand side cannot be improved without better regularity
assumptions of the boundary. This estimate resembles a similar estimate for the 
equation for the positive scalar curvature near isolated singular points, established in 
\cite{Caffarelli1989}. A refined version was proved in \cite{Korevaar1999}. 
Refer to \cite{HanZ2010} for more general equations. 

In the proof of Theorem \ref{main reslut}, we will construct the map $T$, which is determined
by the distances to $S_i$.
The concept of tangent cones will be introduced in Section \ref{sec-Domains}.

We now describe briefly the proof of Theorem \ref{main reslut}, 
which is based on a combination of affine transforms, conformal transforms
and the maximum principle. To make a comparison, we note that, if the domain $\Omega$ is $C^{1,1}$, 
we can place an interior tangent ball and an exterior tangent ball at each of the boundary point 
and then compare the solution in $\Omega$ with the solution in the interior tangent ball and with the solution 
outside the exterior tangent ball. Refer to the proof of Theorem \ref{thrm-C-1,alpha-expansion}
for details. 
Now we assume that $\partial\Omega$ near a boundary point $x_0$ consists of $k$ $C^{1,1}$-hypersurfaces 
$S_1, \cdots, S_k$ intersecting at $x_0$, for some $k\le n$. 
Then, the tangent planes $P_1, \cdots, P_k$ of $S_1, \cdots, S_k$ 
at $x_0$ naturally bound a cone $V_{x_0}$, which is the tangent cone of $\Omega$ at $x_0$. 
Our goal is to compare the solution $u$ in $\Omega$ near $x_0$ 
with the solution $u_{V_{x_0}}$ in $V_{x_0}$. We note that 
a given point $x$ in $\Omega$ may not necessarily be a point in the tangent cone $V_{x_0}$. So as 
a part of the comparison of $u$ with $u_{V_{x_0}}$, we need to construct a map $T$, which maps 
$\Omega$ near $x_0$ onto $V_{x_0}$ near $x_0$, and to compare $u(x)$ with 
$u_{V_{x_0}}(Tx)$. We achieve this in two steps. 

In the first step, we construct two sets $\widetilde B$ and $\widehat B$ with the property 
$\widetilde B\subseteq\Omega\subseteq\widehat B$, where $\widetilde B$ serves the same role as the interior 
tangent ball in the $C^{1,1}$-case and $\widehat{B}$ serves the same role as the complement of the exterior 
tangent ball in the $C^{1,1}$-case. To construct such sets $\widetilde B$
and $\widehat B$, we first place two balls tangent to 
$P_i$ at $p_i$, the closest point to $x$ on $S_i$, for each $i=1, \cdots, k$. We can assign an orientation such that 
we can identify one of these balls as {\it interior} and another as {\it exterior}. As a result, we have 
$k$ interior balls and $k$ exterior balls. Based on how 
$\Omega$ is formed by $S_1, \cdots, S_k$ near $x_0$, we can form 
$\widetilde B$ from the interior balls and $\widehat B$ from the complement of the exterior balls. 
By such an construction, $\widetilde B$ and $\widehat B$ are conformal to infinite cones $\widetilde V$ 
and $\widehat V$. 

In the second step, we compare the solution $u$ in $\Omega$ near $x_0$ with the solution 
$u_{V_{x_0}}$ in the tangent cone $V_{x_0}$. To this end, we first compare $u$ with the solutions 
$\widetilde u$ and $\widehat u$ in $\widetilde B$ and $\widehat B$, respectively, and then compare 
$\widetilde u$ and $\widehat u$ with $u_{V_{x_0}}$. The comparison of $u$ with $\widetilde u$ and $\widehat u$
is based on a simple application of the maximum principle. However, the comparison of 
$\widetilde u$ and $\widehat u$ with $u_{V_{x_0}}$ is delicate and occupies a large portion of the paper. 
Since $\widetilde B$ and $\widehat B$ are conformal to infinite cones $\widetilde V$ 
and $\widehat V$, solutions $\widetilde u$ and $\widehat u$ are related to solutions $\widetilde v$ and 
$\widehat v$ in $\widetilde V$ 
and $\widehat V$ by conformal factors. As a result, we need to compare $u_{V_{x_0}}$ with $\widetilde v$ and 
$\widehat v$, all of which are solutions in cones. Such a comparison is based on 
an anisotropic gradient estimate for solutions in cones. The map $T$ from $\Omega$ to $V_{x_0}$ near $x_0$ 
mentioned above is constructed through the signed distances from $x$ in $\Omega$ to $S_1, \cdots, S_k$ and 
from the corresponding point in $V_{x_0}$ to the faces of $V_{x_0}$. 

The conformal structure of the equation
\eqref{eq-MainEq} plays an essential role. Without such a structure, we will be unable to derive 
the optimal estimate \eqref{main-estimate}. Specifically, if the power $\frac{n+2}{n-2}$ in \eqref{eq-MainEq}
is replaced by other constant $p>1$, we can bound the left-hand side of \eqref{main-estimate} by 
$C|x-x_0|^{\alpha}$, for some constant $\alpha\in (0,1]$, depending on $n$ and $p$. For general $p$, 
there is no conformal structure to utilize. 
The resulted $\alpha$ in fact is much smaller than 1. We will not pursue this issue in the present paper. 


The paper is organized as follows.
In Section \ref{sec-Existence},
we discuss the existence of solutions of \eqref{eq-MainEq}-\eqref{eq-MainBoundary}
in bounded Lipschitz domains and in a certain class of infinite cones.
In Section \ref{sec-BasicEstimates}, we
prove some basic
estimates for asymptotic behaviors near boundary and in particular an 
anisotropic gradient estimate for solutions in cones.
In Section \ref{sec-Domains}, we introduce the class of domains to be discussed in this paper and
analyze their tangent cones.
In Section \ref{sec-DifferentCones}, we compare solutions
in different  cones.
In Section \ref{sec-Singular-1}, we study the
asymptotic
expansions near singular points bounded by $C^{1,1}$-hypersurfaces
and prove  Theorem \ref{main reslut}. 
In Section \ref{Slu-GerSingularDomains}, we discuss the asymptotic expansions 
in more general domains.

\section{Existence of Solutions}\label{sec-Existence}

In this section,
we discuss the existence of solutions of \eqref{eq-MainEq}-\eqref{eq-MainBoundary}
in several classes of domains.

First, we introduce some notations.
Let $x_{0}\in \mathbb R^n$ be a point and $r>0$ be a constant.
Set, for any $x\in B_r(x_0)$,
\begin{equation}\label{eq-SolutionInside}
u_{r, x_0}(x)=\left(\frac{2r}{r^{2}-|x-x_0|^{2}}\right)^{\frac{n-2}{2}}.\end{equation}
Then, $u_{r,x_0}$ is a solution of
\eqref{eq-MainEq}-\eqref{eq-MainBoundary} in $\Omega=B_r(x_0)$.
With $d(x)=r-|x-x_0|$, the distance to the sphere $\partial B_r(x_0)$ from $x\in B_r(x_0)$, we have
$$u_{r,x_{0}}=d^{-\frac{n-2}{2}}\left(1-\frac{d}{2r}\right)^{-\frac{n-2}{2}}.$$
Set, for any  $x\in\mathbb R^n\setminus B_r(x_0)$, 
\begin{equation}\label{eq-SolutionOutside}
v_{r,x_0}(x)=\left(\frac{2r}{|x-x_0|^{2}-r^{2}}\right)^{\frac{n-2}{2}}.\end{equation}
Then, $v_{r,x_0}$ is a solution of
\eqref{eq-MainEq}-\eqref{eq-MainBoundary} in $\Omega=\mathbb R^n\setminus B_r(x_0)$.
With $d(x)=|x-x_0|-r$, the distance to the sphere $\partial B_r(x_0)$ from
$x\in \mathbb R^n\setminus B_r(x_0)$, we have
$$v_{r,x_{0}}=d^{-\frac{n-2}{2}}\left(1+\frac{d}{2r}\right)^{-\frac{n-2}{2}}.$$
These two solutions play an important role in this paper.

Now, we quote a well-known result. 

\begin{theorem}\label{thrm-Existence}
Let $ \Omega $ be a bounded Lipschitz domain in $ \mathbb{R}^{n}$.
Then, \eqref{eq-MainEq} and \eqref{eq-MainBoundary}
admit a unique positive solution $ u \in C^{ \infty}(\Omega)$.
\end{theorem}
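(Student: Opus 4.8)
The plan is to prove existence by the Perron method combined with a barrier argument, and uniqueness by the maximum principle exploiting the monotonicity of the nonlinearity. First I would establish \emph{uniqueness}: if $u_1, u_2$ are two positive solutions, consider for $t>1$ the function $tu_2$ and compare with $u_1$ near the boundary. Since both blow up at $\partial\Omega$ and $tu_2 > u_2$, a standard argument shows $u_1 \le t u_2$ on a neighborhood of $\partial\Omega$; on the remaining compact part one applies the maximum principle to $u_1 - tu_2$, using that $s \mapsto \frac14 n(n-2) s^{(n+2)/(n-2)}$ is increasing so that the difference cannot attain an interior positive maximum. Letting $t \to 1^+$ gives $u_1 \le u_2$, and by symmetry $u_1 = u_2$. (One must be slightly careful to justify the boundary comparison; the cleanest route is to use the explicit exterior-ball supersolution $v_{r,x_0}$ from \eqref{eq-SolutionOutside} together with the fact that $u_i \to \infty$ at $\partial\Omega$.)

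For \emph{existence}, I would first solve the approximating problems with finite boundary data: for each $m \in \mathbb{N}$, standard theory for the semilinear equation $\Delta u = \frac14 n(n-2) u^{(n+2)/(n-2)}$ with the bounded boundary condition $u = m$ on $\partial\Omega$ yields a unique solution $u_m \in C^\infty(\Omega) \cap C(\overline\Omega)$; here one uses $u=0$ as a subsolution and a large constant as a supersolution (the domain is bounded), together with the monotone iteration scheme, and the comparison principle shows $u_m$ is increasing in $m$. The key point is then a \emph{uniform interior bound}: for every $x \in \Omega$, placing the interior ball $B_{d(x)}(x) \subseteq \Omega$ and comparing with the explicit solution $u_{d(x),x}$ from \eqref{eq-SolutionInside} gives
\begin{equation*}
u_m(x) \le u_{d(x),x}(x) = \left(\frac{2}{d(x)}\right)^{\frac{n-2}{2}},
\end{equation*}
independent of $m$. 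Hence $u := \lim_{m\to\infty} u_m$ is finite in $\Omega$, and by elliptic estimates the convergence is in $C^\infty_{\mathrm{loc}}(\Omega)$, so $u$ solves \eqref{eq-MainEq}.

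It remains to verify the boundary blow-up \eqref{eq-MainBoundary}, i.e. $u(x) \to \infty$ as $x \to \partial\Omega$, and this is where the Lipschitz hypothesis enters and constitutes the \textbf{main obstacle}. Since $\Omega$ is Lipschitz, at each boundary point $\xi$ there is a truncated cone $C_\xi \subseteq \Omega$ with vertex $\xi$ and a uniform aperture; one constructs a local subsolution on $C_\xi$ that blows up along the axis, for instance by comparing with the solution in a slightly smaller ball inscribed in the cone and tangent to the axis at a point approaching $\xi$, or by using a homogeneous-in-the-radial-variable barrier adapted to the cone (the solutions in cones discussed in Section \ref{sec-Existence} can be invoked once available). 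Because $u \ge u_m \ge$ (this local subsolution) for $m$ large by comparison on the cone, letting $x \to \xi$ forces $u(x) \to \infty$. Finally, $u$ is positive since $u \ge u_1 > 0$, and $u \in C^\infty(\Omega)$ by interior regularity; combined with the uniqueness established above, this completes the proof. The delicate part throughout is the interplay at $\partial\Omega$: ensuring the approximating solutions do not merely converge to a solution of the equation but to \emph{the} solution with infinite boundary values, for which the Lipschitz cone condition provides exactly enough room to build blow-up barriers.
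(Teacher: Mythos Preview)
The paper does not actually prove Theorem~\ref{thrm-Existence}: it is stated as a well-known result, with only the remark that Loewner and Nirenberg established uniqueness for $C^2$-domains and that the statement extends to Lipschitz domains. Your proposal therefore supplies much more than the paper does.

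Your existence argument is correct and standard: the monotone sequence $u_m$ with boundary value $m$, the uniform interior bound from comparison with $u_{d(x),x}$, $C^\infty_{\mathrm{loc}}$ convergence, and verification of blow-up via interior-cone barriers all work as described. One small imprecision: a Lipschitz domain need not admit exterior tangent balls, so invoking $v_{r,x_0}$ from \eqref{eq-SolutionOutside} directly is not quite right; instead one places a ball inside the uniform \emph{exterior cone} at each boundary point, exactly as in the proof of Lemma~\ref{lemma-ExteriorCone}, and uses $v_{r,x_0}$ for that displaced ball.

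Your uniqueness argument, however, has a genuine gap. The assertion that ``$u_1\le t u_2$ on a neighborhood of $\partial\Omega$'' for \emph{every} $t>1$ does not follow merely from both solutions blowing up. The two-sided estimate $C^{-1}d^{-(n-2)/2}\le u_i\le C d^{-(n-2)/2}$ (from the interior-ball upper bound and the exterior-cone lower bound) gives only $u_1\le C^2 u_2$ near $\partial\Omega$, hence $u_1\le t u_2$ for $t\ge C^2$, not for $t$ close to $1$. Letting $t\to 1^+$ is therefore unjustified; what you actually obtain is $u_1\le t_0 u_2$ for the infimum $t_0$ of admissible $t$, and you must then argue that $t_0=1$. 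One way to close this: since $t_0 u_2$ is a \emph{strict} supersolution when $t_0>1$ (because $t_0 f(u_2)<f(t_0 u_2)$ for the power nonlinearity), a strong maximum principle argument rules out interior contact, while the boundary behaviour must be handled separately. A cleaner route for Lipschitz domains, due essentially to Marcus and V\'eron~\cite{Marcus&Veron1997}, is to show that your constructed $U=\lim u_m$ is simultaneously minimal (from $u_m\le u$) and maximal (by comparing any solution $u$ with the large solution on the inner domain $\{d>\epsilon\}$ and sending $\epsilon\to 0$, using the universal upper bound to control the limit).
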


Loewner and Nirenberg \cite{Loewner&Nirenberg1974} proved the uniqueness 
for $C^2$-domains. In fact, the uniqueness holds for Lipchitz domains as stated in Theorem 
\ref{thrm-Existence}. 

Next, we state a  basic result which will be needed later.

\begin{lemma}\label{lemma-supersution}
Let $ \Omega $ be a domain in $ \mathbb{R}^{n}$,
and $u$ and $v$ be two nonnegative solutions of  \eqref{eq-MainEq}.
Then, $u+v$ is a
nonnegative supersolution of  \eqref{eq-MainEq}.
\end{lemma}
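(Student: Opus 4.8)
The plan is to verify the differential inequality directly by applying the Laplacian to $u+v$ and using convexity of the nonlinearity on the nonnegative reals. Set $p=\frac{n+2}{n-2}>1$ and $c=\frac14 n(n-2)>0$, so that $\Delta u = c u^{p}$ and $\Delta v = c v^{p}$ with $u,v\ge 0$. Then, by linearity of the Laplacian,
\begin{equation*}
\Delta(u+v) = \Delta u + \Delta v = c\bigl(u^{p}+v^{p}\bigr).
\end{equation*}
To show $u+v$ is a supersolution, i.e. $\Delta(u+v)\le c(u+v)^{p}$, it therefore suffices to establish the pointwise inequality $u^{p}+v^{p}\le (u+v)^{p}$ for all $u,v\ge 0$.

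For the elementary inequality, the key step is superadditivity of $t\mapsto t^{p}$ for $p\ge 1$ on $[0,\infty)$. If either $u=0$ or $v=0$ the inequality is an identity, so assume $u,v>0$ and write $s=\frac{u}{u+v}\in(0,1)$, $t=\frac{v}{u+v}=1-s$. Dividing by $(u+v)^{p}$, the claim becomes $s^{p}+t^{p}\le 1=s+t$, which holds because $s^{p}\le s$ and $t^{p}\le t$ when $p\ge 1$ and $s,t\in[0,1]$. Hence $u^{p}+v^{p}\le(u+v)^{p}$ pointwise, and combining with the displayed identity gives $\Delta(u+v)\le c(u+v)^{p}$, so $u+v$ is a nonnegative supersolution of \eqref{eq-MainEq}. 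Nonnegativity is immediate since $u,v\ge 0$.

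There is essentially no obstacle here: the only mild point to be careful about is the regularity needed to make sense of $\Delta(u+v)$ — but since solutions of \eqref{eq-MainEq} are smooth in $\Omega$ (by elliptic regularity, or as recorded in Theorem \ref{thrm-Existence}), $u+v\in C^{\infty}(\Omega)$ and the computation is classical. One could alternatively phrase the superadditivity argument via the convexity of $\varphi(t)=t^{p}$ together with $\varphi(0)=0$, which yields $\varphi(a)+\varphi(b)\le\varphi(a+b)$; either route is routine. This completes the proof.
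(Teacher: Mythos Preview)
Your proof is correct and is precisely the ``straightforward calculation'' the paper alludes to; the paper in fact omits the proof entirely, so your argument fills in exactly what is intended. The superadditivity inequality $u^{p}+v^{p}\le(u+v)^{p}$ for $p\ge 1$ and $u,v\ge 0$ is the whole content, and your normalization argument is a clean way to see it.
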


We omit the proof as it is based on a straightforward calculation. 

\smallskip

In the following, we discuss \eqref{eq-MainEq}-\eqref{eq-MainBoundary}
in infinite cones. Throughout this paper, cones are always solid.

\begin{theorem}\label{existence in cone} For a fixed integer $2\le k\le n$,
let ${V}_{k}$ be an infinite cone in $\mathbb R ^{k}$
such that ${V}_{k}\cap  \mathbb S^{k-1}$ is
a Lipschitz domain in $\mathbb S^{k-1}$.
Then, there exists a solution $u\in C^\infty(V)$ of
\eqref{eq-MainEq}-\eqref{eq-MainBoundary} in $V = {V}_{k}\times \mathbb R^{n-k}$
and $u$ is a function in $x_1, \cdots, x_k$ and satisfies, for any $\lambda>0$ and any 
$x\in V$, 
\begin{equation}\label{eq-scaling} 
u(\lambda x)=\lambda^{-\frac{n-2}{2}}u(x).\end{equation}
\end{theorem}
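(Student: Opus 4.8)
The plan is to construct the solution on $V = V_k \times \mathbb{R}^{n-k}$ by an exhaustion argument, obtaining it as a decreasing limit of solutions on bounded Lipschitz domains, and then to extract the homogeneity \eqref{eq-scaling} and the reduction to the first $k$ variables from the uniqueness of the solution on each approximating domain together with the invariance of the construction under the relevant symmetry groups. First I would fix a large $R$ and a small $\varepsilon$ and consider the bounded Lipschitz domain $\Omega_{R,\varepsilon} = (V_k \cap B_R^k) \times (B_R^{n-k})$, slightly rounded or intersected with $B_{2R}(0) \subset \mathbb{R}^n$ so as to have genuinely Lipschitz boundary and contain a nontrivial piece near the origin; by Theorem \ref{thrm-Existence} there is a unique positive $u_{R} \in C^\infty(\Omega_R)$ solving \eqref{eq-MainEq}-\eqref{eq-MainBoundary} there. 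The key monotonicity is that if $\Omega \subseteq \Omega'$ then $u_{\Omega'} \le u_{\Omega}$ on $\Omega$: indeed $u_{\Omega'}$ is a subsolution on $\Omega$ with finite (hence dominated) boundary values compared to $u_\Omega = \infty$ on $\partial\Omega$, so the maximum principle (in the form used throughout for this equation, comparing a subsolution blowing up more slowly against the true solution) gives the inequality. Hence $\{u_R\}$ is monotone increasing in $R$ (as $\Omega_R$ increases), and bounded above on any fixed compact subset of $V$ by, e.g., the explicit supersolution coming from an interior tangent ball at the nearest boundary point as in \eqref{eq-SolutionInside}, or more simply by $u_{B_\rho(x)}$ for a ball around any interior point. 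Actually one must be slightly careful: as $R\to\infty$ the domains exhaust $V$, so $u_R$ increases; the uniform upper bound on compacta comes from placing, at each point $x\in V$, a fixed ball $B_{d(x)}(x)\subseteq V$ and noting $u_R(x)\le u_{B_{d(x)}(x)}(x) = (2/d(x))^{(n-2)/2}$ by the comparison above. Thus $u := \lim_R u_R$ exists, is finite and positive on $V$, and by interior Schauder/elliptic estimates applied on compact subsets (the $u_R$ being locally uniformly bounded, hence locally uniformly $C^{2,\alpha}$) the convergence is in $C^\infty_{loc}$ and $u$ solves \eqref{eq-MainEq} in $V$. That $u = \infty$ on $\partial V$ follows because near any finite boundary point $\partial V$ contains a piece of a Lipschitz domain into which an interior ball can be inserted, giving the lower bound $u\ge u_{B}$ which blows up.

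Next I would establish \eqref{eq-scaling} and the independence of $x_{k+1},\dots,x_n$. Both are consequences of uniqueness combined with the conformal/scaling covariance of \eqref{eq-MainEq} and the symmetry of $V$. For the scaling: if $u$ solves \eqref{eq-MainEq} on $V$, then $u_\lambda(x) := \lambda^{(n-2)/2} u(\lambda x)$ solves \eqref{eq-MainEq} on $\lambda^{-1}V = V$ (since $V$ is a cone), and it is the Loewner–Nirenberg solution on $V$ because it still blows up on $\partial V$; if $V$ were bounded Lipschitz we would invoke Theorem \ref{thrm-Existence} directly, and for the cone the same uniqueness holds for the limit solution constructed above — one checks that $u_\lambda$ is obtained by the same exhaustion procedure applied to the rescaled domains $\lambda^{-1}\Omega_{R} = \Omega_{\lambda^{-1}R}$, whose limit is again $u$. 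Hence $u_\lambda = u$, which is exactly \eqref{eq-scaling}. The translation invariance in the last $n-k$ variables is identical: for $t\in\mathbb{R}^{n-k}$, $u(x', x'' + t)$ solves \eqref{eq-MainEq} on $V$ (since $V$ is a product with $\mathbb{R}^{n-k}$) and blows up on $\partial V$, and the exhaustion can be chosen translation-compatible, so uniqueness forces $u(x',x''+t)=u(x',x'')$; letting $t$ range over $\mathbb{R}^{n-k}$ shows $u$ depends only on $x_1,\dots,x_k$.

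The one genuine subtlety — and the step I expect to be the main obstacle — is the uniqueness statement for the solution in the unbounded cone $V$, since Theorem \ref{thrm-Existence} is stated only for bounded Lipschitz domains and an unbounded domain with a non-smooth vertex sits outside it. The resolution is that we do not need full uniqueness among all solutions on $V$, only that the particular solution produced by the exhaustion is canonical, i.e. independent of the choice of exhausting sequence and equivariant under the symmetries used above. For that it suffices to show: (i) any two exhaustions $\Omega_R \nearrow V$ and $\Omega'_R \nearrow V$ give the same limit, which follows by a diagonal interleaving $\Omega_{R_1}\subseteq \Omega'_{R_2}\subseteq \Omega_{R_3}\subseteq\cdots$ and the monotonicity comparison; and (ii) if $g$ is one of our symmetries (dilation by $\lambda$, translation in $\mathbb{R}^{n-k}$) then $g$ maps an exhaustion of $V$ to an exhaustion of $V$, so the limit is $g$-invariant. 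One should also record that $u$ is smooth up to the smooth part of $\partial V$ away from the cone axis, though this is not needed for the statement. With these observations in place the proof is complete; the bulk of the work is the routine verification that the comparison arguments via the explicit solutions \eqref{eq-SolutionInside}–\eqref{eq-SolutionOutside} apply in each instance, which I would not spell out in full.
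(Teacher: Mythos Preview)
Your approach is genuinely different from the paper's and, modulo two slips described below, it works. The paper does \emph{not} exhaust $V$ by bounded domains; instead it imposes the separation-of-variables ansatz $u(x)=r^{-\frac{n-2}{2}}g(\theta)$ in polar coordinates on $\mathbb R^k$ from the outset, which builds the scaling property \eqref{eq-scaling} and the independence from $x_{k+1},\dots,x_n$ directly into the construction. The problem then reduces to a semilinear equation for $g$ on the spherical cross-section $S_k=V_k\cap\mathbb S^{k-1}$, solved by monotone approximation with finite boundary data $g_{(i)}=i$, where the upper bound comes (as in your argument) from the ball solution \eqref{eq-SolutionInside}. Your route is more generic and exports to other unbounded invariant domains; the paper's route is shorter because the symmetry reduction is free and no separate uniqueness-of-the-canonical-limit argument is needed.

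Two points to fix. First, you state the comparison $u_{\Omega'}\le u_\Omega$ for $\Omega\subseteq\Omega'$ correctly, and in your first sentence you correctly call the limit ``decreasing,'' but you then twice say $u_R$ \emph{increases} with $R$ and chase an upper bound. It is the other way around: as $\Omega_R\nearrow V$ the solutions $u_R$ \emph{decrease}, so the limit exists trivially (bounded below by $0$); the local uniform upper bound for Schauder estimates is then simply $u_R\le u_{R_0}$ on compacta of $\Omega_{R_0}$ for $R\ge R_0$. Second, your argument that the limit $u$ blows up on $\partial V$ invokes an \emph{interior} ball, which gives an upper bound, not a lower one. What you need is a uniform \emph{lower} barrier blowing up at the boundary: use the exterior cone condition for the Lipschitz cone $V$ and the solution \eqref{eq-SolutionOutside} in the complement of a ball contained in $V^c$ (exactly the device in the proof of Lemma~\ref{lemma-ExteriorCone}); this gives $u_R\ge v_{r,\tilde p}$ uniformly in $R$, hence $u\ge v_{r,\tilde p}$, which blows up at the chosen boundary point. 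With these two corrections your exhaustion/canonical-limit argument goes through, and your deduction of the scaling and translation invariance from the exhaustion being symmetry-equivariant is clean.
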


The scaling property \eqref{eq-scaling} will be used repeatedly in the rest of the paper. 

\begin{proof}
Let $(r,\theta)$ be the polar coordinates in $\mathbb R^k$. Then,
$$\Delta_{\mathbb R^k}=
\frac{\partial^{2}}{\partial r^{2}}+\frac{k-1}{r}\frac{\partial}{\partial
r}+\frac{1}{r^{2}}\Delta_{S^{k-1}},$$
where $\Delta_{\mathbb S^{k-1}}$ is the  Laplace-Beltrami operator on the unit
sphere $  \mathbb S ^{k-1}$. Set
$$u(x)=r^{\alpha} g(\theta),$$
and substitute such a $u$ in \eqref{eq-MainEq} and \eqref{eq-MainBoundary}.
It is easy to see $\alpha=-\frac{n-2}{2}$ and \eqref{eq-MainEq}-\eqref{eq-MainBoundary}
hold if
\begin{align}
\label{eq-MainEq2}\Delta_{ \mathbb S ^{k-1}} g +\left(-\frac{n-2}{2}\right)\left(k-1-\frac{n}{2}\right)g &=
\frac14n(n-2)
g^{\frac{n+2}{n-2}} \quad  \text{in } S_k,\\
\label{eq-MainBoundary2} g &=\infty  \quad\text{on }\partial S_k,
\end{align}
where $S_k={V}_{k}\cap  \mathbb S^{k-1}$.
We will prove that there exists a nonnegative solution $g$ of
\eqref{eq-MainEq2}-\eqref{eq-MainBoundary2}.

For each integer $i\ge 1$, there exists a solution $g_{(i)}\in
C(\overline{S}_{k})\cap C^{\infty}(S_{k}) $ of
\begin{align*}
\Delta_{ \mathbb S ^{k-1}} g_{(i)} +\left(-\frac{n-2}{2}\right)\left(k-1-\frac{n}{2}\right)g_{(i)} &=
\frac14n(n-2)
g_{(i)}^{\frac{n+2}{n-2}} \quad \text{in } S_{k},\\
g_{(i)} &=i\quad\text{on }  \partial S_{k}.
\end{align*}
The proof is based on a standard iteration. 
We now claim  $\Delta_{\mathbb S ^{k-1}}
g_{(i)}\geq 0$ in $S_{k} $, i.e.,
\begin{equation}\label{eq-AlgebraicRelation}\frac{n-2}{2}\left(k-1-\frac{n}{2}\right)g_{(i)} +
\frac14n(n-2)g_{(i)}^{\frac{n+2}{n-2}}
\geq0.\end{equation}
First, \eqref{eq-AlgebraicRelation} obviously holds if $ k-1\geq n/2 $. Next,
we consider the case $ k-1< n/2$. If \eqref{eq-AlgebraicRelation} is violated somewhere,
then $g$ must assume its minimum at some point $\theta_0$ in the set
$$ \left\{\theta\in S^{n}_{k}:\, \frac{n-2}{2}\left(k-1-\frac{n}{2}\right)g_{(i)} +
\frac{n(n-2)}{4}g_{(i)}^{\frac{n+2}{n-2}}< 0 \right\}.$$
On the other hand, we have $
\Delta_{\mathbb S ^{k-1}} g_{(i)} (\theta_{0})\geq 0$, which leads to a contradiction.

By taking a difference, we have
$$\Delta_{\mathbb S ^{k-1}} (g_{(i)}-g_{(i-1)})=c_i(g_{(i)}-g_{(i-1)})\quad \text{in } S_{k},$$
where $c_i$ is a nonnegative function in $S_{k}$. We need \eqref{eq-AlgebraicRelation}
to prove $c_i\ge 0$. 
The maximum principle implies, $g_{(i)}\ge g_{(i-1)}$ for any $i\ge 2$.

For any $\theta \in  S_{k}=V_k\cap\mathbb S^{k-1},$ write $x_\theta=(\theta,0_{\mathbb R^{n-k}})\in\mathbb R^n$.
For an arbitrarily fixed $\theta_{0}\in S_{k},$
take a ball $B_{r_0}(x_{\theta_0})\subset V=V_k\times\mathbb R^{n-k}.$
Then, $u_{(i)}(x)=(\sqrt{x_{1}^{2}+
...+x_{k}^{2}})^{-\frac{n-2}{2}}g_{(i)}(\theta)$ satisfies
\begin{align*}
\Delta u_{(i)} &= \frac14n(n-2)u_{(i)}^{\frac{n+2}{n-2}} \quad \text{in  }
V,\\
u_{(i)} &= (\sqrt{x_{1}^{2}+ ...+x_{k}^{2}})^{-\frac{n-2}{2}}i \quad \text{on }
\partial V.
\end{align*}
Let $u_{r_{0},x_{\theta_0}}$ be the solution of
\eqref{eq-MainEq}-\eqref{eq-MainBoundary} in $B_{r_0}(x_{\theta_0})$,
given by \eqref{eq-SolutionInside}.
By  the maximum principle, we have, for any $x\in B_{r_0}(x_{\theta_0}),$
$$u_{(i)}(x)\leq u_{r_{0},x_{\theta_0}}(x).$$
Then, for any $\theta \in  S_{k}$ with $x_\theta\in
B_{r_0}(x_{0})$,
$$g_{(i)}(\theta)=u_{(i)}(x_\theta)\leq
u_{r_{0},x_{\theta_0}}(x_\theta).$$ Therefore, there exists a  $ g \in C^{\infty}(S_{k})$ such
that $g_{(i)}\rightarrow $ $g$ in $  C^{m}_{loc}(S_{k}) $, for any positive integer  $m$.
 Since $g_{(i)}$ equals $i$ on $\partial S_{k}$, $g$ is a solution of
\eqref{eq-MainEq2}-\eqref{eq-MainBoundary2}.
\end{proof}

If the cone $ V$ as in Lemma \ref{existence in cone} is Lipschitz  near the origin, then the nonnegative solution $u$
of \eqref{eq-MainEq}-\eqref{eq-MainBoundary} for $V$ is unique.
To verify this, let $\widetilde{u}$ be another nonnegative solution of
\eqref{eq-MainEq}-\eqref{eq-MainBoundary}. By Lemma \ref{lemma-supersution} and the
maximum principle, we have, for any $\epsilon $ small  and $ r\gg |x| $ large,
\begin{align*}
\widetilde{u}(x)& \leq u (x-\epsilon {e} ) + u_{r,0}(x), \\
\widetilde{u}(x)& \geq u (x+\epsilon {e} ) - u_{r,0}(x),
\end{align*}
where ${e}$ is some
unit vector in $V$ and $u_{0,r}$ is the solution of \eqref{eq-MainEq}-\eqref{eq-MainBoundary} in $B_r$.
Letting $\epsilon\rightarrow 0$ and $ r\rightarrow \infty $, we obtain $ \widetilde{u}(x) =
u(x) $, which implies the uniqueness.

\section{Basic Estimates}\label{sec-BasicEstimates}

In this section, we prove several basic estimates concerning asymptotic behaviors
of solutions near boundary.
First, we study the
asymptotic behavior near $C^{1,\alpha}$-portions of  $\partial\Omega$.

\begin{theorem}\label{thrm-C-1,alpha-expansion}
Let $\Omega$ be a bounded domain in $\mathbb R^n$ and $\partial\Omega$ be
$C^{1,\alpha}$ near $x_0\in\partial\Omega$ for some $\alpha\in (0,1]$. Suppose
$u\in C^\infty(\Omega)$ is a solution of \eqref{eq-MainEq}-\eqref{eq-MainBoundary}. Then,
$$| d^{\frac{n-2}{2}}u-1|\leq Cd^{\alpha}\quad\text{in }\Omega\cap B_r(x_0),$$
where $d$ is the distance to $\partial\Omega$, and $r$ and $C$
are positive constants depending only on $n$, $\alpha$ and the geometry of  $\Omega$.
\end{theorem}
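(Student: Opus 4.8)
The plan is to prove Theorem~\ref{thrm-C-1,alpha-expansion} by the classical comparison argument of Loewner--Nirenberg, adapted to $C^{1,\alpha}$ boundary regularity, using the explicit interior and exterior sphere solutions $u_{r,x_0}$ and $v_{r,x_0}$ from \eqref{eq-SolutionInside} and \eqref{eq-SolutionOutside} as barriers. Since $\partial\Omega$ is $C^{1,\alpha}$ near $x_0$, at each boundary point $y$ close to $x_0$ we can place an interior tangent ball $B_\rho(y^{+})$ and an exterior tangent ball $B_\rho(y^{-})$, both of some fixed radius $\rho>0$ (depending on the $C^{1,\alpha}$ norm of $\partial\Omega$), touching $\partial\Omega$ at $y$; the key quantitative point is that, because the boundary is $C^{1,\alpha}$ rather than merely $C^{1,1}$, the centers and radii of these osculating balls cannot be chosen uniformly as in the $C^2$ case, but one does get the Hölder-type control $|d(x) - d_{B_\rho(y^{+})}(x)| \le C d(x)^{1+\alpha}$ and similarly for the exterior ball, where $d$ is the distance to $\partial\Omega$ and $d_{B}$ is the distance to the relevant sphere. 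This is the geometric input that produces the exponent $\alpha$ in the final estimate.

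The main steps, in order, are as follows. First I would establish the upper bound: fix $x \in \Omega \cap B_r(x_0)$, let $y \in \partial\Omega$ be a nearest boundary point so $d(x) = |x-y|$, and place the exterior tangent ball $B_\rho(y^{-})$ at $y$ so that $B_\rho(y^{-}) \subseteq \mathbb{R}^n \setminus \Omega$ locally; then $u \le v_{\rho, y^{-}}$ on the relevant piece of boundary and, after handling the behavior away from $x_0$ by adding a small multiple of a fixed interior-ball solution (à la Lemma~\ref{lemma-supersution}) as in the uniqueness argument given after Theorem~\ref{existence in cone}, the maximum principle gives $u(x) \le v_{\rho,y^{-}}(x)$, which via the explicit formula $v_{\rho,y^-} = \tilde d^{-\frac{n-2}{2}}(1 + \tilde d/2\rho)^{-\frac{n-2}{2}}$ with $\tilde d = |x - y^-| - \rho$ yields $d^{\frac{n-2}{2}} u \le 1 + C d^{\alpha}$ once the estimate $|d - \tilde d| \le C d^{1+\alpha}$ is inserted. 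Second, symmetrically, I would use the interior tangent ball $B_\rho(y^{+}) \subseteq \Omega$ and the comparison $u \ge u_{\rho,y^{+}}$ (valid since $u = \infty$ on $\partial B_\rho(y^+) \cap \partial\Omega$ and $u$ is finite elsewhere on $\partial B_\rho(y^+)$, again using the maximum principle with the usual $\epsilon$-translation trick to deal with the non-compactness/infinite boundary data) to obtain $d^{\frac{n-2}{2}} u \ge 1 - C d^{\alpha}$. Combining the two bounds gives $|d^{\frac{n-2}{2}} u - 1| \le C d^{\alpha}$ on $\Omega \cap B_r(x_0)$ for a suitably small $r$.

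The step I expect to be the main obstacle is making the comparison on a bounded neighborhood rigorous: the barriers $u_{\rho,y^{\pm}}$ are defined on balls, not on $\Omega \cap B_r(x_0)$, so one must control $u$ and the barriers on the ``artificial'' part of the boundary $\Omega \cap \partial B_r(x_0)$, where $u$ is finite and the barrier may be small. The clean way is to localize: work in $\Omega \cap B_r(x_0)$, note that on $\partial(\Omega \cap B_r(x_0))$ either we are on $\partial\Omega$ (where $u$ blows up and the barrier inequality holds automatically after a tangent-ball comparison) or on the sphere $\partial B_r(x_0) \cap \Omega$, where $u$ is bounded by a constant $M_r$ depending on $r$; then one compares $u$ with $(1+\epsilon) u_{\rho, y^+}$ translated slightly and with an added harmless supersolution, absorbing $M_r$ by taking $r$ small and using the interior gradient/Harnack-type estimates for $u$. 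This is essentially the argument sketched in the paragraph following Theorem~\ref{existence in cone} and in the original Loewner--Nirenberg paper, and the only genuinely new bookkeeping is tracking how the $C^{1,\alpha}$ modulus of continuity of the boundary normal enters the error $|d - \tilde d|$, which is a routine computation with the definition of a $C^{1,\alpha}$ graph.
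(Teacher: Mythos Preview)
Your overall approach---comparison with the explicit ball and ball-complement solutions \eqref{eq-SolutionInside}, \eqref{eq-SolutionOutside}---is the same as the paper's, but the execution has a genuine gap in the case $\alpha<1$, and two smaller issues.

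First, you have the two barrier inequalities reversed. The interior ball $B_\rho(y^+)\subset\Omega$ gives the \emph{upper} bound $u\le u_{\rho,y^+}$ (since $u_{\rho,y^+}=\infty$ on $\partial B_\rho(y^+)$ while $u$ is finite there), and the exterior ball $B_\rho(y^-)\subset\Omega^c$ gives the \emph{lower} bound $u\ge v_{\rho,y^-}$ on $\Omega$. This is easily fixed, and once fixed there is no ``artificial boundary'' issue at all: the maximum principle is applied on $B_\rho(y^+)$ for the upper bound and on all of $\Omega$ for the lower bound, with no need for the localization and $\epsilon$-translations you describe in your last paragraph.

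The real gap is your treatment of $\alpha<1$. A $C^{1,\alpha}$ boundary with $\alpha<1$ does \emph{not} admit interior or exterior tangent balls of any fixed radius $\rho>0$; the graph $|\varphi(x')|\le M|x'|^{1+\alpha}$ beats $|x'|^2/(2\rho)$ near the origin for every $\rho$. Your sentence ``both of some fixed radius $\rho>0$'' is therefore false, and your subsequent estimate $|d(x)-d_{B_\rho(y^+)}(x)|\le C d(x)^{1+\alpha}$ is unmotivated (indeed, if the ball were tangent at the nearest point $y$ this difference would be exactly zero, giving $d^\alpha$ nowhere). The paper's device is to let the comparison ball depend on the point $x$: with $d=d(x)$ one takes radius $r=d^{1-\alpha}/(2M)$ and offsets the center along the normal by $Md^{1+\alpha}$, using the elementary inequality
\[
|x'|^{1+\alpha}\le d^{1+\alpha}+d^{-(1-\alpha)}|x'|^{2}
\]
to verify $B_r(q)\subset\Omega$ and $B_r(-q)\subset\Omega^c$. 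The factor $d/r\sim d^{\alpha}$ in $(1\pm d/2r)^{-(n-2)/2}$ and the offset $Md^{1+\alpha}$ then produce exactly the $d^{\alpha}$ error. This $x$-dependent scaling of the barrier ball is the one idea your proposal is missing.
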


\begin{proof}
We take $R>0$ sufficiently small such that $\partial\Omega\cap B_{R}(x_0)$ is
$C^{1,\alpha}$.
We fix an $x\in\Omega\cap B_{R/4}(x_0)$ and take $p\in \partial\Omega$, also
near $x_0$, such that
$d(x)=|x-p|$.
Then,  $p\in  \partial\Omega\cap B_{R/2}(x_0)$.
By a translation and rotation, we assume $p=0$ and the $x_n$-axis is the interior normal
to $\partial\Omega$
at 0. Then, $x$ is on the positive $x_n$-axis,
with $d=d(x)=|x|$, and $P=\{x_n=0\}$ is the tangent plane of $\partial\Omega$ at 0.
Moreover, a portion of $\partial \Omega$ near 0 can be expressed by a
$C^{1,\alpha}$-function $\varphi$ in $B'_{R}\subset\mathbb R^{n-1}$, with $\varphi(0)=0$ and
\begin{equation}\label{eq-boundaryC1alpha}
|\varphi(x')|\le M|x'|^{1+\alpha}\quad\text{for any } x'\in B_R'.\end{equation}
Here,  $M$ is a  positive constant chosen to be uniform, independent of $x$.

We first consider the case $\alpha=1$. For any $r>0$,  the lower semi-sphere of
$$x_1^2+...+x_{n-1}^2+(x_n-r)^2=r^2$$ satisfies
$x_n\ge |x'|^2/(2r)$.
By fixing a constant $r$ sufficiently small,
\eqref{eq-boundaryC1alpha} implies
$$B_r(re_n)\subset\Omega\text{ and }B_r(-re_n)\cap \Omega=\emptyset.$$
Let $u_{r, re_n}$ and $v_{r, -re_n}$ be the solutions of
\eqref{eq-MainEq}-\eqref{eq-MainBoundary} in $B_r(re_n)$ and $\mathbb R^n\setminus
B_r(-re_n)$, given by \eqref{eq-SolutionInside}
and \eqref {eq-SolutionOutside},
respectively. Then, by the maximum principle, we have
$$v_{r,-re_n}\le u\le u_{r, re_n}\quad\text{in }B_r(re_n).$$
For the $x$ above in the positive $x_n$-axis with $|x|=d<r$, we obtain
$$d^{-\frac{n-2}{2}}\left(1+\frac{d}{2r}\right)^{-\frac{n-2}{2}}\le u \le
d^{-\frac{n-2}{2}}\left(1-\frac{d}{2r}\right)^{-\frac{n-2}{2}}.$$
This implies the desired result for $\alpha=1$.

Next, we consider $\alpha\in (0,1)$. Recall that $x$ is in the positive $x_n$-axis and
$|x|=d$.
We first note
\begin{equation}\label{eq-boundaryC1alpha2}
|x'|^{1+\alpha}\le d^{1+\alpha}+\frac{1}{d^{1-\alpha}}|x'|^{2} \quad\text{for any
}x'\in\mathbb R^{n-1}.\end{equation}
This follows from the H\"older inequality, or more easily, by considering $|x'|\le d$ and
$|x'|\ge d$
separately. Let $r=d^{1-\alpha}/(2M)$ and $q$ be the point on the positive $x_n$-axis such
that
$|q|=Md^{1+\alpha}+r$. By taking $d$ sufficiently small, \eqref{eq-boundaryC1alpha}
and \eqref{eq-boundaryC1alpha2} imply
$$B_r(q)\subset\Omega\text{ and }B_r(-q)\cap \Omega=\emptyset.$$
Let $u_{r, q}$ and $v_{r, -q}$ be the solutions of
\eqref{eq-MainEq}-\eqref{eq-MainBoundary} in $B_r(q)$ and $\mathbb R^n\setminus B_r(-q)$,
given by \eqref{eq-SolutionInside}
and \eqref {eq-SolutionOutside},
respectively. Then, by the maximum principle, we have
$$v_{r,-q}\le u\le u_{r, q}\quad\text{in }B_r(q).$$
For the $x$ above,
dist$(x, \partial B_r(q))=d-Md^{1+\alpha}$ and
dist$(x, \partial B_r(-q))=d+Md^{1+\alpha}$. Evaluating at such an $x$, we obtain
\begin{align*}&(d+Md^{1+\alpha})^{-\frac{n-2}{2}}
\left(1+\frac{M}{d^{1-\alpha}}(d+Md^{1+\alpha})\right)^{-\frac{n-2}{2}}\\
&\quad \le u\le
(d-Md^{1+\alpha})^{-\frac{n-2}{2}}
\left(1+\frac{M}{d^{1-\alpha}}(d-Md^{1+\alpha})\right)^{-\frac{n-2}{2}}.\end{align*}
This implies the desired result for $\alpha\in (0,1)$. \end{proof}

If $\partial\Omega$ is $C^{1,\alpha}$ near $x_0$ for some $\alpha\in (0,1]$, then
the tangent cone $V_{x_0}$ of $\Omega$ at $x_0$ is a half space and
$v(x)=d(x)^{-\frac{n-2}{2}}$ is  the solution of Loewner-Nirenberg
problem in $V_{x_0}$, where $d$ is the distance to $\partial V_{x_0}$.

\smallskip

Next, we prove a preliminary result for domains with singularity. We note that a finite
circular cone
is determined by its vertex, its axis, its height and its opening angle. The height and the opening
angle are often referred to as the size of the cone. Here, cones are solid.
We point out that we do not assume the boundedness of the domains in the next result.

\begin{lemma}\label{lemma-ExteriorCone}
Let $ \Omega $ be a domain satisfying the uniform exterior cone condition in $\mathbb{R}^{n}$.
Suppose $ u \in C^{ \infty}(\Omega)$ is a solution of \eqref{eq-MainEq}-\eqref{eq-MainBoundary}.
Then there exists a constant $\delta >0$,  depending only on the size of exterior cones,
such that, for any $x\in\Omega$ with $d(x)<\delta$,
$$C^{-1}
\leq d(x)^{\frac{n-2}{2}}u(x) \leq 2^{\frac{n-2}{2}},$$ where $d(x) $ is the distance from
$x$ to $\partial\Omega$, and $C$ is a constant  depending only on $n$ and the size of exterior
cones.
\end{lemma}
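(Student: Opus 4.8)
The plan is to prove the two-sided bound separately, using a scaling normalization together with the comparison principles already set up. For the upper bound, I would exploit the interior ball: since $\Omega$ is open, for any $x\in\Omega$ the ball $B_{d(x)}(x)$ lies in $\Omega$, so by the maximum principle $u(x)\le u_{d(x),x}(x) = (2d(x)/(d(x)^2))^{(n-2)/2}\cdot(\text{value at center})$. Evaluating \eqref{eq-SolutionInside} at the center $x$ of $B_{d(x)}(x)$ gives $u_{d(x),x}(x) = (2/d(x))^{(n-2)/2}\cdot 2^{-(n-2)/2}\cdot\text{...}$; more precisely $u_{r,x_0}(x_0) = (2r/r^2)^{(n-2)/2} = (2/r)^{(n-2)/2}$, so with $r=d(x)$ we get $u(x)\le (2/d(x))^{(n-2)/2}$, i.e. $d(x)^{(n-2)/2}u(x)\le 2^{(n-2)/2}$. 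This needs no smallness of $\delta$ and no cone condition; it is the easy half.

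For the lower bound I would use the uniform exterior cone condition. Fix $x\in\Omega$ with $d(x)<\delta$, let $p\in\partial\Omega$ realize the distance, and place at $p$ an exterior circular cone $W$ (of the uniform size) with $W\cap\Omega=\emptyset$; arrange it so that $x$ lies, roughly, along the axis opposite to $W$'s opening. By the maximum principle, $u$ in $\Omega$ is bounded below on $\Omega\cap B$ (for a suitable ball $B$) by the solution $u_{\mathbb R^n\setminus W}$ of \eqref{eq-MainEq}-\eqref{eq-MainBoundary} on the complement of the exterior cone — which exists and is a genuine comparison barrier because $\mathbb R^n\setminus W$ contains $\Omega$ locally and its solution is dominated by $u$. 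The key point is then to estimate $u_{\mathbb R^n\setminus W}$ from below near its boundary. Here I would invoke the scaling/homogeneity structure: by a translation putting the vertex at $0$, the complement of an infinite circular cone is scale-invariant, so its Loewner–Nirenberg solution $v$ satisfies $v(\lambda y)=\lambda^{-(n-2)/2}v(y)$ as in \eqref{eq-scaling}. Restricting to the axis ray pointing away from $W$, the function $t\mapsto t^{(n-2)/2}v(t\,e)$ is constant in $t$, equal to some positive number $c_0=c_0(n,\text{angle})$. Since the distance from $t e$ to $\partial(\mathbb R^n\setminus W)$ is comparable (by a fixed geometric factor depending only on the opening angle) to $t$, this yields $d(x)^{(n-2)/2}u(x)\ge C^{-1}$ with $C$ depending only on $n$ and the cone size, provided $d(x)$ is small enough that the finite exterior cone of the uniform size actually "looks infinite" at scale $d(x)$ — this is what fixes $\delta$.

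The main obstacle is the lower bound, and within it the precise geometric bookkeeping: one must (i) justify the existence of the comparison solution on the complement of an infinite circular cone (covered by Theorem \ref{existence in cone} after identifying the cone's link as a Lipschitz spherical domain, and by the uniqueness remark following it), (ii) pass from the finite exterior cone of fixed size to the infinite one at the local scale $d(x)$, which is exactly where the smallness $d(x)<\delta$ enters — one rescales by $1/d(x)$ so that the finite cone of fixed height becomes a cone of height $\gg 1$, hence effectively infinite for the comparison on $B_1$, and then uses \eqref{eq-scaling} to transfer the estimate back — and (iii) control the distance-to-boundary ratio along the chosen ray by a constant depending only on the opening angle. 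None of these steps is deep, but assembling them so that all constants depend only on $n$ and the size of the exterior cones is the substance of the argument; the conformal/scaling invariance of \eqref{eq-MainEq} is what makes it work cleanly.
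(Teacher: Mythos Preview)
Your upper bound is correct and identical to the paper's argument: $B_{d(x)}(x)\subset\Omega$, so $u(x)\le u_{d(x),x}(x)=2^{(n-2)/2}d(x)^{-(n-2)/2}$.

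For the lower bound, however, the paper proceeds differently and more simply. It does \emph{not} use the Loewner--Nirenberg solution on the complement of an infinite cone. Instead, it places a ball inside the finite exterior cone: if $W$ has vertex $p$, axis $e_p$, half-angle $\theta$ and height $h$, then for $\widetilde p=p+\frac{d(x)}{\sin\theta}e_p$ one has $B_{d(x)}(\widetilde p)\subset W\subset\Omega^c$ as soon as $d(x)<h(1+1/\sin\theta)^{-1}$ (this is what fixes $\delta$). Hence $\Omega\subset\mathbb R^n\setminus B_{d(x)}(\widetilde p)$ \emph{globally}, the maximum principle gives $u\ge v_{d(x),\widetilde p}$ on all of $\Omega$, and evaluating the explicit formula \eqref{eq-SolutionOutside} at $x$ (where $|x-\widetilde p|\le d(x)(1+1/\sin\theta)$) yields $u(x)\ge C(n,\theta)^{-1}d(x)^{-(n-2)/2}$.

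Your route via the infinite cone has a genuine gap. The comparison $u\ge v$ with $v$ the solution on $\mathbb R^n\setminus W_\infty$ would follow from the maximum principle if $\Omega\subset\mathbb R^n\setminus W_\infty$, but this inclusion can fail: the infinite extension of the exterior cone may re-enter $\Omega$ beyond height $h$ (take $\Omega=B_2\setminus\overline{B_1}$ and $p$ on the inner sphere; the infinite exterior cone at $p$ crosses $\Omega$ on the far side). Your rescaling by $1/d(x)$ does localize the geometry so that $\Omega'\cap B_H\subset\mathbb R^n\setminus W_\infty$ with $H=h/d(x)\gg1$, but then the comparison on $\Omega'\cap B_H$ requires $\widetilde u\ge v$ on the portion $\Omega'\cap\partial B_H$ of the boundary. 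There both functions are finite, no inequality between them is available a priori (a lower bound on $\widetilde u$ is precisely what you are trying to prove), and $v$ need not even be bounded on that set since $\Omega'\cap\partial B_H$ may approach $\partial W_\infty$. The ball-in-cone trick sidesteps all of this because the containment $\Omega\subset\mathbb R^n\setminus B_{d(x)}(\widetilde p)$ is global and the barrier is explicit.
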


\begin{proof}
We take an arbitrarily fixed $ x \in \Omega$. Then,  $ B_{d(x)}(x) \subseteq \Omega$.
Let $u_{d(x),x}$ be the solution of
\eqref{eq-MainEq}-\eqref{eq-MainBoundary} in $B_{d(x)}(x) $,
given by \eqref{eq-SolutionInside}.
By the maximum
principle, we have $$u(x) \leq u_{d(x),x} (x)=
d(x)^{-\frac{n-2}{2}}\left(1-\frac{d(x)}{2d(x)}
\right)^{-\frac{n-2}{2}}=2^{\frac{n-2}{2}}d(x)^{-\frac{n-2}{2}}.$$
Next, we assume $d(x)=|x-p|,$ for some $p\in\partial\Omega$.
There exists a finite circular cone $V_p$, with the vertex $p$, the axis
${e}_{p}$,
the height $h$,  and the opening angle $2\theta$, such that
$V_{p}\subseteq \Omega^{c}$. Here, we can assume $h$ and $\theta$ are
constants independent of  the choice of
$p \in \partial \Omega.$ We further assume
\begin{equation}\label{eq-Requirement-d}d(x)<h(1+\frac{1}{\sin\theta})^{-1}.\end{equation}
Set $\widetilde{p}= p+\frac{1}{\sin\theta}d(x)
{e}_{p}.$ Then, $B_{d(x)}(\widetilde{p})\subseteq\Omega^{c}.$
Let $v_{d(x),\widetilde{p}}$ be the solution of
\eqref{eq-MainEq}-\eqref{eq-MainBoundary} in  $\mathbb R^n\setminus B_{d(x)}(\widetilde{p})$,
given by  \eqref {eq-SolutionOutside}.
By the maximum principle, we have
$$\aligned u(x) &\geq v_{d(x),\widetilde{p}}(x)\geq
\left(d(x)+\frac{1}{\sin\theta}d(x)\right)^{-\frac{n-2}{2}}
\left(1+\frac{d(x)+\frac{1}{\sin\theta}d(x)}{2d(x)}\right)^{-\frac{n-2}{2}}\\
&\geq Cd(x)^{-\frac{n-2}{2}}.\endaligned$$
We conclude the desired estimate. \end{proof}

Lemma \ref{lemma-ExteriorCone} demonstrates that, for any $x\in \Omega$, the values of solutions
in $B_{d(x)/2}(x)$ are comparable.

\begin{remark}\label{rmrk-delta} The requirement on $d(x)<\delta$ in Lemma \ref{lemma-ExteriorCone}
is due to \eqref{eq-Requirement-d}, where $h$ is the height of the exterior cone.
If at each point $p\in \partial\Omega$, there exists an infinite exterior cone with a fixed angle $\theta$, then
Lemma \ref{lemma-ExteriorCone} holds for all $x\in \Omega$. A similar remark also holds
for Lemma \ref{lemma-GradientEstimates} below.
\end{remark}

We next derive estimates of derivatives.

\begin{lemma}\label{lemma-GradientEstimates}
Let $ \Omega $ be a domain satisfying the uniform exterior cone condition in $\mathbb{R}^{n}$.
Suppose $ u \in C^{ \infty}(\Omega)$ is a solution of \eqref{eq-MainEq}-\eqref{eq-MainBoundary}.
Then there exists a constant $\delta >0$,  depending only on the size of exterior cones,
such that, for any $x\in\Omega$ with $d(x)<\delta$,
\begin{equation}\label{eq-Derivative}
d(x)|Du(x)|+d^2(x)|D^2u(x)|\le Cu(x),\end{equation} where $d(x) $ is the distance from
$x$ to $\partial\Omega$, and $C$ is a constant  depending only on $n$ and the size of exterior
cones.
\end{lemma}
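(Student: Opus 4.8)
The plan is to derive the interior derivative estimate \eqref{eq-Derivative} from Lemma \ref{lemma-ExteriorCone} together with standard interior elliptic estimates, after rescaling to normalize the size of the ball on which we work. First I would fix $x\in\Omega$ with $d(x)<\delta$, where $\delta$ is as in Lemma \ref{lemma-ExteriorCone}, and set $d=d(x)$. The ball $B_{d/2}(x)\subset B_d(x)\subseteq\Omega$, and by Lemma \ref{lemma-ExteriorCone} (applied at every point of $B_{d/2}(x)$, whose distances to $\partial\Omega$ all lie between $d/2$ and $3d/2$) we have a two-sided bound $c\,d^{-\frac{n-2}{2}}\le u\le C\,d^{-\frac{n-2}{2}}$ on $B_{d/2}(x)$; in particular $\sup_{B_{d/2}(x)}u\le C\,u(x)$ by the sentence following Lemma \ref{lemma-ExteriorCone} (``the values of solutions in $B_{d(x)/2}(x)$ are comparable'').

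The key step is a scaling argument. Define $w(y)=d^{\frac{n-2}{2}}u(x+dy)$ for $y\in B_{1/2}(0)$. The conformal scaling of \eqref{eq-MainEq} — precisely the property that makes $u\mapsto \lambda^{\frac{n-2}{2}}u(\lambda\cdot)$ preserve solutions, as used in Theorem \ref{existence in cone} — shows that $w$ solves $\Delta w=\tfrac14 n(n-2)w^{\frac{n+2}{n-2}}$ in $B_{1/2}(0)$, and by the previous paragraph $c\le w\le C$ there, with constants depending only on $n$ and the size of the exterior cones. Now the right-hand side $\tfrac14 n(n-2)w^{\frac{n+2}{n-2}}$ is bounded in $L^\infty(B_{1/2}(0))$, so elliptic $L^p$ and Schauder estimates on $B_{1/2}(0)$, combined with Sobolev embedding (or a short bootstrap: $w\in W^{2,p}\hookrightarrow C^{1,\gamma}$, then the right-hand side is $C^{0,\gamma}$, so $w\in C^{2,\gamma}$), give $\|w\|_{C^2(B_{1/4}(0))}\le C$ with $C=C(n,\text{cone size})$. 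Unwinding, $|Dw(0)|=d^{\frac{n}{2}}|Du(x)|$ and $|D^2w(0)|=d^{\frac{n+2}{2}}|D^2u(x)|$, so $d|Du(x)|+d^2|D^2u(x)|\le C\,d^{-\frac{n-2}{2}}\le C\,u(x)$, where the last inequality is the lower bound from Lemma \ref{lemma-ExteriorCone}. This is exactly \eqref{eq-Derivative}.

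Two points deserve care. First, to close the estimate I must keep the constants independent of $x$, which is why I normalize to a unit ball by scaling before invoking the elliptic estimates; the scale-invariance of \eqref{eq-MainEq} under $u\mapsto \lambda^{\frac{n-2}{2}}u(\lambda\cdot)$ is what makes this possible and is the only place the conformal structure enters. Second, the interior estimates I use are standard (Agmon–Douglis–Nirenberg $L^p$ theory and Schauder theory for $\Delta$, plus Sobolev embedding); since the nonlinearity $t\mapsto t^{\frac{n+2}{n-2}}$ is smooth on the range $[c,C]\subset(0,\infty)$, no issue arises from the non-Lipschitz behaviour at $0$. I expect the main (minor) obstacle to be bookkeeping: verifying that Lemma \ref{lemma-ExteriorCone} applies uniformly at every point of $B_{d/2}(x)$ — i.e., that $d(\cdot)<\delta$ and the exterior cone condition hold there — and that the resulting comparability constant, hence the final $C$, depends only on $n$ and the size of the exterior cones. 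If one wants higher derivatives, the same bootstrap yields $d^{m}|D^m u|\le C_m u$ for every $m$, but \eqref{eq-Derivative} is all that is stated and all we need.
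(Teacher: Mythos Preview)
Your proposal is correct and takes essentially the same approach as the paper: both combine Lemma \ref{lemma-ExteriorCone} with standard interior elliptic estimates, the only cosmetic difference being that the paper writes the scaled Schauder estimates directly (with the $d^\beta$, $d$, $d^2$ weights on balls $B_{d/4}(x)$, $B_{d/8}(x)$) while you first rescale to a unit ball via $w(y)=d^{\frac{n-2}{2}}u(x+dy)$ and then quote unit-scale estimates. The bookkeeping point you flag about $d(\cdot)<\delta$ on $B_{d/2}(x)$ is harmless (shrink $\delta$ or note that the upper bound in Lemma \ref{lemma-ExteriorCone} needs no smallness of $d$).
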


\begin{proof} We take an arbitrary $\beta\in (0,1)$. By the standard
interior estimates, we have, for any $x\in \Omega$ with
$d=\operatorname{dist}(x,\partial \Omega)$,
$$\aligned&d^{\beta}[u]_{C^{\beta}(B_{{d}/{8}}(x))}
+d|u|_{L^{\infty}(B_{{d}/{8}}(x))}+d^{1+\beta}[Du]_{C^{\beta}(B_{{d}/{8}}(x))}\\
&\qquad\leq
C\{|u|_{L^{\infty}(B_{{d}/{4}}(x))}
+d^{2}|u^{\frac{n+2}{n-2}}|_{L^{\infty}(B_{{d}/{4}}(x))}\},\endaligned$$
and
$$\aligned &d^{2}|D^{2}u|_{L^{\infty}(B_{{d}/{16}}(x))}+d^{2+\beta}[D^2u]_{C^{\beta}(B_{{d}/{8}}(x))}\\
&\qquad\leq
C\{|u|_{L^{\infty}(B_{{d}/{8}}(x))}
+d^{2}|u^{\frac{n+2}{n-2}}|_{L^{\infty}(B_{{d}/{8}}(x))}  +
d^{2+\beta}[u^{\frac{n+2}{n-2}}]_{C^{\beta}(B_{{d}/{8}}(x))}\},\endaligned$$
where $C$ is a positive constant depending only on $n$ and $\beta$. Then,
Lemma \ref{lemma-ExteriorCone} implies the desired result.
\end{proof}

In \eqref{eq-Derivative}, the gradients at points are estimated in terms of their distances to
the boundary. This estimate is not sufficient in many applications.
In the next result, we estimate the directional derivatives in cones
along certain directions in terms of the distance to the corresponding faces forming the boundary of the cones.
Such an anisotropic gradient estimate plays a fundamental role in this paper.

\begin{lemma}\label{lemma-AnisotropicEstimate}
Let $P_1, \cdots, P_n$ be $n$ hyperplanes in $\mathbb R^n$
passing the origin with linearly independent unit normal vectors $\nu_1, \cdots, \nu_n$,
and $V\subset\mathbb R^n$ be an infinite cone, with its vertex at the origin
and a Lipschitz boundary,
such that
$$\partial V=\bigcup_{i=1}^nF_i,$$
where $F_i$ is a subset of $P_i$ with a nonempty interior.
Suppose $u$ is the
unique nonnegative solution of \eqref{eq-MainEq}-\eqref{eq-MainBoundary} for
$\Omega=V$.
Then, for each $i=1, \cdots, n$ and any $x\in V$,
\begin{equation}\label{eq-anisotropic}
\text{dist}(x, F_i)\left|\partial_{\mu_i} u(x)\right| \leq Cu(x),
\end{equation}
where
$\mu_{_i}$ is a unit vector along $\bigcap_{j \neq i} F_j$,
and $C$ is a positive constant depending only on $n$ and
$\|(\nu_1, \cdots, \nu_n)^{-1}\|$.
Moreover, for any $x, x^*\in V$, if, for any $i=1, \cdots, n$, 
\begin{equation}\label{eq-AnisotripicCondition}
|\langle x-x^*, \mu_i\rangle|\le \tau|\langle x, \mu_i\rangle|,\end{equation}
for some constant $\tau>0$, then 
\begin{equation}\label{eq-AnisotropicDifference} 
|u(x)-u(x^*)|\le C\tau u(x).\end{equation}\end{lemma}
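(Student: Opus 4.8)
The plan is to prove the anisotropic gradient bound \eqref{eq-anisotropic} first, then derive the difference estimate \eqref{eq-AnisotropicDifference} as a consequence by integrating along a suitable path. For \eqref{eq-anisotropic}, fix $i$ and a point $x\in V$. The key point is that the direction $\mu_i$ (along $\bigcap_{j\neq i}F_j$) is precisely the direction in which moving a point changes its distance to $F_i$ linearly while leaving the cone invariant under the associated one-parameter affine rescaling centered at a vertex of the edge $\bigcap_{j\neq i}F_j$. Concretely, I would exploit the scaling property \eqref{eq-scaling}: since $V$ is a cone with vertex $0$, for $\lambda>0$ the map $y\mapsto\lambda y$ preserves $V$, and $u(\lambda y)=\lambda^{-(n-2)/2}u(y)$. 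More relevantly, because $\mu_i$ lies on the edge $\bigcap_{j\neq i}F_j$, which is itself a ray of the cone, for any $t$ the translated cone $V+t\mu_i$ still has the same faces $P_j$ for $j\neq i$ (since $\mu_i$ is parallel to those) and only $F_i$ is translated; thus the dilation of $V$ about a point far out on the edge moves $x$ essentially in the $\mu_i$-direction. Comparing $u$ on $V$ with $u$ on $V\pm t\mu_i$-type perturbations via the maximum principle, together with the interior gradient estimate of Lemma \ref{lemma-GradientEstimates} applied at scale $\mathrm{dist}(x,F_i)$, should yield $\mathrm{dist}(x,F_i)|\partial_{\mu_i}u(x)|\le Cu(x)$. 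The constant depends only on $n$ and $\|(\nu_1,\dots,\nu_n)^{-1}\|$ because that norm controls the comparability of $|x-x_0|$, the various $\mathrm{dist}(x,F_j)$, and the angles of the cone.

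For \eqref{eq-AnisotropicDifference}, given $x,x^*\in V$ satisfying \eqref{eq-AnisotripicCondition}, I would write $x^*-x=\sum_{i=1}^n c_i\mu_i$ (possible since $\mu_1,\dots,\mu_n$ form a basis, being dual to the linearly independent $\nu_i$ up to the invertible Gram-type matrix), and connect $x$ to $x^*$ by the polygonal path that changes one coordinate $c_i$ at a time, staying inside $V$ (this requires checking the path remains in $V$, which follows from the smallness of $\tau$ and convexity considerations along each $\mu_i$-segment since moving in the $\mu_i$ direction only changes $\mathrm{dist}(\cdot,F_i)$). Along the $i$-th segment, $\mathrm{dist}(\cdot,F_i)$ stays comparable to $\mathrm{dist}(x,F_i)$ (using $\tau$ small and $|c_i|\approx|\langle x-x^*,\mu_i\rangle|\lesssim\tau|\langle x,\mu_i\rangle|\lesssim\tau\,\mathrm{dist}(x,F_i)$, where the last comparison again uses $\|(\nu_1,\dots,\nu_n)^{-1}\|$), so \eqref{eq-anisotropic} gives $|\partial_{\mu_i}u|\le C u/\mathrm{dist}(\cdot,F_i)\le C'u(x)/\mathrm{dist}(x,F_i)$ there, and also $u$ itself stays comparable to $u(x)$ along the segment (by Lemma \ref{lemma-ExteriorCone} or by integrating the bound). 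Integrating, the $i$-th segment contributes at most $C|c_i|\cdot u(x)/\mathrm{dist}(x,F_i)\le C\tau u(x)$, and summing over $i=1,\dots,n$ gives \eqref{eq-AnisotropicDifference} with a constant absorbing the factor $n$.

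The main obstacle I anticipate is making the comparison argument in the first part fully rigorous: one must verify that the perturbed domains used in the maximum-principle comparison (translates or dilates of $V$) genuinely contain, or are contained in, $V$ near $x$ up to a controlled error in $\mathrm{dist}(\cdot,F_i)$, and that the error in $u$ introduced thereby is $O(u(x)\cdot t/\mathrm{dist}(x,F_i))$ rather than something worse — this is where the conformal/scaling structure and the dual-basis geometry must be combined carefully. A secondary technical point is ensuring the connecting path in the second part stays inside $V$; if a straight polygonal path fails, one can instead first dilate $x$ slightly toward the interior to gain room, apply the estimate, and then use continuity, at the cost of only an $O(\tau)$ correction. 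Both issues are geometric bookkeeping controlled entirely by $\|(\nu_1,\dots,\nu_n)^{-1}\|$, so the stated dependence of $C$ is the natural one.
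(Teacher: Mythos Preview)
Your proposal has a genuine gap in the argument for the anisotropic gradient bound \eqref{eq-anisotropic}, precisely at the point you flag as the ``main obstacle.''

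The difficulty is this: the estimate $\mathrm{dist}(x,F_i)\,|\partial_{\mu_i}u(x)|\le Cu(x)$ is strictly stronger than the isotropic bound $\mathrm{dist}(x,\partial V)\,|Du(x)|\le Cu(x)$ whenever $\mathrm{dist}(x,F_i)\gg \mathrm{dist}(x,\partial V)$, and both of the tools you invoke naturally operate only at the smaller scale. First, Lemma \ref{lemma-GradientEstimates} ``applied at scale $\mathrm{dist}(x,F_i)$'' is not available: the ball $B_{\mathrm{dist}(x,F_i)}(x)$ need not lie in $V$, since it may hit a closer face $F_j$. Second, the maximum-principle comparison with $V\pm t\mu_i$ does not work because those translated cones are \emph{not} nested with $V$ in either direction: although $\mu_i$ is parallel to each $P_j$ for $j\neq i$, the vertex moves to $t\mu_i$, so near the origin $V$ and $V+t\mu_i$ each contain points the other does not. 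The same objection applies to dilations centered at a point on the edge; only dilations about the vertex preserve $V$, and those move $x$ radially, not along $\mu_i$.

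The paper's proof supplies the missing idea. After a linear change of variables sending each $\mu_i$ to $e_i$ (so the faces lie in coordinate hyperplanes), one orders the face-distances $\widetilde d_1\le\widetilde d_2\le\cdots\le\widetilde d_n$ at the given point and proceeds inductively in $i$. For $i=1$ the isotropic estimate suffices. For $i\ge 2$, either $\widetilde d_i$ is comparable to $\widetilde d_{i-1}$ and one inherits the bound from the previous step, or $\widetilde d_i\gg\widetilde d_{i-1}$; in the latter case one rescales by $\widetilde d_i$ and compares $u$ with the solution in the simpler cone bounded only by the first $i-1$ faces. That auxiliary solution is independent of $\widetilde x_i,\dots,\widetilde x_n$, so its $\widetilde x_i$-derivative vanishes, and the quotient $u/(\text{auxiliary solution})$ is shown to satisfy $|\overline w|\le C\overline d$ and hence $|\overline w_{\overline x_i}|\le C$ by interior estimates. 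For $n=3$ in the last step there is no room for a further cone comparison, and instead one uses the exact scaling identity $u((1+t)x)=(1+t)^{-(n-2)/2}u(x)$ together with the already-established bounds in the first two directions to control the third. This inductive ``peeling off'' of faces is the mechanism that upgrades the scale from $\mathrm{dist}(x,\partial V)$ to $\mathrm{dist}(x,F_i)$, and it is absent from your outline.

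Your argument for \eqref{eq-AnisotropicDifference} by integrating \eqref{eq-anisotropic} along a polygonal path in the $\mu_i$-directions is essentially what the paper does (in the transformed coordinates it becomes integration along coordinate directions), so that part is fine once \eqref{eq-anisotropic} is in hand.
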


Here and hereafter, $\|\cdot\|$ is the norm of matrices, considered as transforms in the Euclidean spaces.
We always treat $\nu_i$ as a column vector. Then, $(\nu_1, \cdots, \nu_n)$ is an invertible
$n\times n$ matrix. In the statement of Lemma \ref{lemma-AnisotropicEstimate}, each
$F_i$ is a face of $V$ and each $\bigcap_{j \neq i} F_j$ is an edge transversal to $F_i$. Hence
in \eqref{eq-anisotropic}, directional derivatives along edges are estimated in terms of the distances
to the corresponding faces.

\begin{proof} Without loss of generality, we assume 
$\langle\nu_i, \mu_i\rangle>0$, for any $i=1, \cdots, n$. 
Let $e_i$ be the unit vector along the $x_i$-axis, for each $i=1, \cdots, n$.
Consider the linear transform $E$ given by 
$$E=(\mu_{1 },\cdots,\mu_{n } )^{-1}.$$
Then, $E$ transforms $\mu_i$ to $e_{i }$ and $P_i$ to the hyperplane $\{ x_i =0\}$.
Set $\widetilde V=EV$ and, for $\widetilde x \in \widetilde V$,
$$\widetilde{u}(\widetilde x) =u(E^{-1}\widetilde x).$$
Under the transform $\widetilde x=Ex$ with $x \in V$, we have
\begin{equation}\label{eq-TransformedEq}a_{ij} \widetilde{u}_{\widetilde x_i\widetilde x_j} =
\frac14n(n-2) \widetilde{u}^{\frac{n+2}{n-2}}\quad\text{in }\widetilde V,\end{equation}
where $(a_{ij}) = EE^T$. We also  have, for $i=1, \cdots, n$,
\begin{equation}\label{eq-RelationSignedDistance}
d_i =c_i|\widetilde x_i|,\end{equation}
where $d_i$ is the distance to $P_i$ and $c_i$ is a constant satisfying
$$\frac{1}{\| E\|} \leq c_i \leq \|E^{-1}\|.$$
We first note $\|E^{-1}\|\le \sqrt{n}$. Next, we claim
\begin{equation}\label{eq-Estimate-E}\| E\| \leq \sqrt{n} \|(\nu_1,\cdots,\nu_n)^{-1}\|.\end{equation}
To prove this, we consider the $n\times n$ matrix $N$ given by
$$N=(\nu_1, \cdots,\nu_n).$$
Then,
$\langle \nu_l, \mu_{j}\rangle=0$ for $l\neq j.$
As a consequence, for any $j=1, \cdots, n$,
\begin{equation}\label{eq-Relation}N^{T}\mu_{j} =\langle \nu_j, \mu_{j}\rangle e_j,\end{equation}
and hence
$\| N^{-1}\| \geq{\langle \nu_{1,j},
\mu_{1,j}\rangle}^{-1}$. By writing \eqref{eq-Relation} in its matrix form
$$N^TE^{-1}=\operatorname{diag}(\langle \nu_1, \mu_{1}\rangle, \cdots, \langle \nu_n, \mu_{n}\rangle),$$
we obtain
\begin{align*}
\|E \|  \leq \|  N^{T}\|\cdot   \|N^{-1}\|
\leq \sqrt{n}\| N^{-1}\|.
\end{align*}
This is \eqref{eq-Estimate-E}.
Now, $\widetilde V$ is bounded by faces $\widetilde F_1, \cdots, \widetilde F_n$, with
each $\widetilde F_i=EF_i$ on a hyperplane.
Without loss of generality, we assume, for each $i=1, \cdots, n$,
$$\widetilde F_i\subset \{\widetilde x_i=0\}.$$
Set, for any $\widetilde x\in \widetilde V$,
$$\widetilde{d}_i =\text{dist}(\widetilde x, \widetilde F_i).$$
We will prove
\begin{equation}\label{eq-gradient-estimate-cone}
|\widetilde{u}_{\widetilde x_i}| \leq C\frac{\widetilde{u}}{\widetilde{d}_i}.
\end{equation}
Without loss of generality, we assume, for a fixed $\widetilde x\in \widetilde V$,
$$\widetilde{d}_1 \leq \widetilde{d}_2 \leq \cdots\leq \widetilde{d}_n.$$
We note $\widetilde{d}_1 =\text{dist}(\widetilde x, \partial \widetilde V).$

{\it Case 1.} First, we prove \eqref{eq-gradient-estimate-cone} for $i=1$.
By Lemma \ref{lemma-ExteriorCone}
and Lemma \ref{lemma-GradientEstimates}, we have, for $i=1,...,n$,
\begin{equation}\label{eq-case1}
|\widetilde u_{\widetilde x_i}|\leq  C\widetilde{d}_1^{-\frac{n-2}{2}-1}   \leq  C\frac{\widetilde{u}}{\widetilde{d}_1}.\end{equation}
In particular, we have \eqref{eq-gradient-estimate-cone} for $i=1$.

{\it Case 2.} Next, we prove \eqref{eq-gradient-estimate-cone} for $i=2$.

{\it Case 2.1.} We assume $\widetilde{d}_2 \leq  8n\|E\| \widetilde{d}_1$.
Then, by \eqref{eq-case1},
$$|\widetilde{u}_{\widetilde x_2}|\leq  C\frac{\widetilde{u}}{\widetilde{d}_1}   
\leq  C\frac{\widetilde{u}}{\widetilde{d}_2}.$$

{\it Case 2.2.} We assume $\widetilde{d}_2 > 8n\|E\| \widetilde{d}_1$.
Set $r ={\widetilde{d}_2}/{8}$.
Then,
$$B_{8r}(\widetilde x)\bigcap \partial \widetilde V \subseteq \{\widetilde x_1= 0\}.$$
Let $\widetilde p$ be the point on $\{\widetilde x_1= 0\}$ with the smallest
distance to $\widetilde x$. Then, $B_{4r}(\widetilde p)\bigcap \partial \widetilde V \subseteq \{\widetilde x_1= 0\}$, 
and $B_{4r}(\widetilde p)\bigcap \partial \widetilde V\cap\{\widetilde x_i=0\}=\emptyset$ for $i=2,\cdots, n$.

Set
$$\overline{u}(\overline x) = r^{\frac{n-2}{2}}{\widetilde u}(\widetilde p+r\overline x).$$
By Theorem \ref{thrm-C-1,alpha-expansion},
we have
\begin{equation}\label{eq-Estimate-overline-w}|(c_1 \overline x_1)^{\frac{n-2}{2}}\overline{u}
-1| \leq C \overline{d},\end{equation}
where $c_1$ is as in \eqref{eq-RelationSignedDistance}.
Although Theorem \ref{thrm-C-1,alpha-expansion} is formulated for bounded domains, the proof
only requires the existence of the interior and exterior tangent balls.
We also point out that Theorem \ref{thrm-C-1,alpha-expansion} holds for solutions of
\eqref{eq-MainEq} and that $\widetilde u$ satisfies
\eqref{eq-TransformedEq}. Therefore, there is an extra factor $c_1$ in \eqref{eq-Estimate-overline-w}.

Set
$$\overline w=(c_1 \overline x_1)^{\frac{n-2}{2}}\overline{u}-1.$$
We denote by $\overline V$ the image of $\widetilde V$ under the transform $(\cdot-\widetilde p)/r$. Then,
$ \overline w$ satisfies
\begin{equation*}
a_{ij}\overline w_{\overline x_i\overline x_j}+ 2a_{ij}
\frac{[(c_1 \overline x_1)^{-\frac{n-2}{2}}]_{\overline x_i}}{(c_1 \overline x_1)^{-\frac{n-2}{2}}} \overline w_{\overline x_j}=
\frac14n(n-2) ( c_1 \overline x_1)^{-2} [(1+\overline w)^{\frac{n+2}{n-2}}-1-\overline w],
\end{equation*}
in $B_4 \cap \overline V$.
For any fixed $\overline{x} \in B_{2} \cap \overline V$, we consider the above equation in 
$B_{d_{\overline{x}}/2}(\overline x)$. 
First, \eqref{eq-Estimate-overline-w} implies 
$$|\overline w|\le Cd_{\overline x}\quad\text{in }B_{d_{\overline{x}}/2}(\overline x).$$
Next, we have 
$$\left|d_{\overline{x}}a_{ij}
\frac{[(c_1 \overline x_1)^{-\frac{n-2}{2}}]_{\overline x_i}}{(c_1\overline x_1)^{-\frac{n-2}{2}}}\right|
\le C\quad\text{in }B_{d_{\overline{x}}/2}(\overline x),$$ 
and 
$$\left|d^2_{\overline{x}}(c_1 \overline x_1)^{-2}
[(1+\overline{w})^{\frac{n+2}{n-2}}-1-\overline{w}]\right|
\leq Cd_{\overline x}\quad\text{in }B_{d_{\overline{x}}/2}(\overline x).$$
By applying the scaled interior estimate in $B_{d_{\overline{x}}/2}(\overline x)$, we get, for $i=1,\cdots, n$,
$$|\overline w_{\overline x_i} (\overline{x}) | \leq C,$$
and then, for  $i=2,\cdots,n$,
$$|\overline{u}_{\overline x_i}| =|[(1+\overline w)(c\overline x_1)^{-\frac{n-2}{2}}]_{\overline x_i}|
=|\overline w_{\overline x_i}(c\overline x_1)^{-\frac{n-2}{2}}|\leq C\overline{u},$$
where we used Lemma \ref{lemma-ExteriorCone}. Therefore, for $i=2,...,n$,
\begin{equation}\label{eq-case2}|\widetilde{u}_{\widetilde x_i}|
=|[ r^{-\frac{n-2}{2}}\overline{u}(\frac{\widetilde x-\widetilde p}{r})]_{\widetilde x_i}|
\leq C\frac{\widetilde{u}}{r}\leq  C\frac{{u}}{\widetilde{d}_2}.\end{equation}
Hence, we have  \eqref{eq-gradient-estimate-cone} for $i=2$.

{\it Case 3.} Next, we prove  \eqref{eq-gradient-estimate-cone} for $i=3$.

{\it Case 3.1.} We assume 
$\widetilde{d}_3 \leq  8n\|E\| \widetilde{d}_2$, 
$\widetilde{d}_2 \leq  8n\|E\| \widetilde{d}_1$.
Then, by \eqref{eq-case1},
$$|\widetilde{u}_{\widetilde x_3}|\leq  C\frac{\widetilde{u}}{\widetilde{d}_1}  
\leq  C\frac{\widetilde{u}}{\widetilde{d}_3}.
$$

{\it Case 3.2.} We assume $\widetilde{d}_3 \leq  8n\|E\| \widetilde{d}_2$,
$\widetilde{d}_2 > 8n\|E\| \widetilde{d}_1$. Then, by \eqref{eq-case2},  we have
$$|\widetilde{u}_{\widetilde x_3}|
\leq  C\frac{{u}}{\widetilde{d}_2} \leq C\frac{{u}}{\widetilde{d}_3}.$$

{\it Case 3.3.} We assume $\widetilde{d}_3> 8n \|E\|\widetilde{d}_2$.
We first consider the case $n=3$. For the given $\widetilde x=(\widetilde x_1, \widetilde x_2, \widetilde x_3)$
and any $t$ sufficiently small, consider
$\widetilde x'=(\widetilde x_1, \widetilde x_2, (1+t)\widetilde x_3)$
and $\widetilde x''=(1+t)\widetilde x$. The mean-value theorem implies
$$|\widetilde u(\widetilde x')-\widetilde u(\widetilde x'')|\le 
|\widetilde u_{\widetilde x_1}(\widetilde \xi_1)t\widetilde x_1|
+|\widetilde u_{\widetilde x_2}(\widetilde \xi_2)t\widetilde x_2|,$$
where $\widetilde \xi_1, \widetilde \xi_2$ are points on the line segment between $\widetilde x'$
and $\widetilde x''$. By what we proved in Case 1 and Case 2, we have
$$|\widetilde u(\widetilde x')-\widetilde u(\widetilde x'')|\le
C\left(\frac{\widetilde u(\widetilde \xi_1)}{\widetilde d_1(\widetilde \xi_1)}|t\widetilde x_1|
+\frac{\widetilde u(\widetilde \xi_2)}{\widetilde d_2(\widetilde \xi_2)}|t\widetilde x_2|\right)
\leq C|t|\widetilde{u}(\widetilde x).$$
Next, by the scaling property \eqref{eq-scaling} of solutions in cones, we get
$$|\widetilde u(\widetilde x)-\widetilde u(\widetilde x'')|
= | (1+t)^{-\frac{n-2}{2}} -1|\widetilde{u}(\widetilde x)\leq C|t|\widetilde{u}(\widetilde x),$$
and hence
$$|\widetilde u(\widetilde x)-\widetilde u(\widetilde x')|
\leq C|t|\widetilde{u}(\widetilde x).$$
Dividing by $|t\widetilde x_3|$ and letting $t\to 0$, we obtain
$$|\widetilde{u}_{\widetilde x_3}(\widetilde x)|\leq  C\frac{\widetilde{u}(\widetilde x)}{|\widetilde{x}_3|}.$$
Note
$$\widetilde d_3^2\le \widetilde x_1^2+\widetilde x_2^2+\widetilde x_3^2
\le \widetilde d_1^2+\widetilde d_2^2+\widetilde x_3^2.$$
By the assumptions on $\widetilde d_1, \widetilde d_2, \widetilde d_3$, 
we obtain $\widetilde d_3\le 2|\widetilde x_3|$
and hence
$$|\widetilde{u}_{\widetilde x_3}(\widetilde x)|\leq  C\frac{\widetilde{u}(\widetilde x)}{\widetilde{d}_3}.$$

Next, we consider the case $n\ge 4$.
Set $r ={\widetilde{d}_3}/{8}$.
Then,
$$B_{8r}(\widetilde x)\cap \partial \widetilde V \subseteq \{\widetilde x_1= 0\}\cup\{\widetilde x_2= 0\}.$$
Let $\widetilde V_{12}$ be the infinite cone bounded by $\{\widetilde x_1= 0\}$ and
$\{\widetilde x_2= 0\}$ such that $B_{8r}(\widetilde x)\cap \widetilde V=B_{8r}(\widetilde x)\cap \widetilde V_{12}$.
Set $\widetilde p$ to be the point on $\partial \widetilde V_{12}$ with the smallest
distance to $\widetilde x$. Then, $B_{4r}(\widetilde p)\cap \partial \widetilde V \subseteq \partial \widetilde V_{12}$.
Let $\widetilde v$ be the nonnegative solution of \eqref{eq-TransformedEq} and \eqref{eq-MainBoundary}
for $\Omega = \widetilde V_{12}$,
which is a function of only two variables $\widetilde x_1$ and $\widetilde x_2$.

Set $$\overline{u}(\overline x) = r^{\frac{n-2}{2}}{\widetilde u}(\widetilde p+r\overline x), $$
and
$$\overline{v}(\overline x) = r^{\frac{n-2}{2}}\widetilde v(\widetilde p+r\overline x).$$
We denote by $\overline V$ and $\overline V_{12}$ the images of $\widetilde V$ and $\widetilde V_{12}$ under
the transform $(\cdot-\widetilde p)/r$, respectively.
Set
$$\overline w=\frac{\overline{u}}{\overline{v}}-1.$$ Then, $ \overline w$ satisfies
\begin{equation*}
a_{ij}\overline w_{\overline x_i\overline x_j}
+ 2a_{ij}\frac{\overline{v}_{\overline x_i}}{\overline{v}} \overline w_{\overline x_j}=
\frac14n(n-2)\overline{ v }^{\frac{4}{n-2}}[(1+\overline w)^{\frac{n+2}{n-2}}-1-\overline w]
\quad\text{in }B_4\cap \overline V_{12}.
\end{equation*}
Next, we claim
\begin{equation}\label{eq-Estimate_w}
|\overline w| \leq C \overline{d}^{\frac{n-2}{2}}\quad\text{in }B_2\cap \overline V.\end{equation}
For any fixed $\overline{x} \in B_2 \cap \overline V$, we note
$\overline V\cap B_{1/2}(\overline x)\subset \overline V_{12}$ and
$\overline V_{12}\cap B_{1/2}(\overline x)\subset \overline V$.
Let $u_{\overline x, 1/2}$ be the solution of \eqref{eq-MainEq}-\eqref{eq-MainBoundary} in $B_{1/2}(\overline x)$,
given by \eqref{eq-SolutionInside}.
Lemma \ref{lemma-supersution} implies
\begin{align*}
\overline u\le \overline v+u_{\overline x, 1/2},\quad \overline u\le \overline v+u_{\overline x, 1/2}
\quad\text{in }B_{1/2}(\overline x).
\end{align*}
Note $u_{\overline x, 1/2}(\overline x)\le  C$. By Lemma \ref{lemma-ExteriorCone}, we have
\begin{align*}
\overline u(\overline x)&\le \overline v(\overline x)+C\le \overline v(\overline x)(1+C\overline d^{\frac{n-2}{2}}),\\
\overline v(\overline x)&\le \overline u(\overline x)+C\le \overline u(\overline x)(1+C\overline d^{\frac{n-2}{2}}).
\end{align*}
This finishes the proof of \eqref{eq-Estimate_w}. By $n\ge 4$, we get
\begin{equation}\label{eq-Estimate-overline-w2}|\overline w| 
\leq C \overline{d}\quad\text{in }B_2\cap \overline V.\end{equation}
For any fixed $\overline{x} \in B_2 \cap \overline V$, we have 
$$|\overline w|\le Cd_{\overline x}\quad\text{in }B_{d_{\overline x}/2}(\overline{x}).$$
Moreover, 
$$\left|d_{\overline{x}}a_{ij}\frac{\overline{v}_{x_j}}{\overline{v}}\right|
\le C\quad\text{in }B_{d_{\overline x}/2}(\overline{x}),$$
and 
$$
d^2_{\overline{x}}\overline{ v }^{\frac{4}{n-2}}
|(1+\overline{w})^{\frac{n+2}{n-2}}-1-\overline{w}|\leq C d_{\overline{x}}
\quad\text{in }B_{d_{\overline x}/2}(\overline{x}).$$
By applying the scaled interior estimate in $B_{d_{\overline x}/2}(\overline{x})$, we have, 
for $i=1,\cdots, n$,
$$|\overline w_{\overline x_i} (\overline x) | \leq C,$$
and then, for $i=3,\cdots, n$,
$$|\overline{u}_{\overline x_i}| =|[(1+\overline w)\overline{v}]_{\overline x_i}|
=|\overline w_{\overline x_i}\overline{v}|\leq C\overline{u},$$
where we used Lemma \ref{lemma-ExteriorCone} and
the fact that $\overline v$ is a function of $\overline x_1$ and $\overline x_2$. Therefore, for $i=3,\cdots, n$,
$$|\widetilde{u}_{\widetilde x_i}| =|[ r^{-\frac{n-2}{2}}\overline{u}(\frac{\widetilde x-\widetilde p}{r})]_{\widetilde x_i}|
\leq C\frac{\widetilde{u}}{r}\leq  C\frac{\widetilde{u}}{\widetilde{d}_3}.$$

Similarly, we can prove \eqref{eq-gradient-estimate-cone} for general $i$. 
This is \eqref{eq-anisotropic} in $\widetilde V$. 

Next, we prove \eqref{eq-AnisotropicDifference}. In $\widetilde V$, 
\eqref{eq-AnisotripicCondition} reduces to 
$$|\widetilde{x}_i-\widetilde{x}^*_i|\le \tau|\widetilde{x}_i|.$$
By \eqref{eq-gradient-estimate-cone} and the fact $|\widetilde{x}_i|\le \widetilde{d}_i$, we have  
$$
|\widetilde{u}(\widetilde{x})-\widetilde{u}(\widetilde{x}^*)|\le C\tau \widetilde{u}(\widetilde{x}).$$
This is \eqref{eq-AnisotropicDifference} in $\widetilde V$. \end{proof}

Lemma \ref{lemma-AnisotropicEstimate}, appropriately modified,
holds if the boundary $\partial V$ is formed from $k$ linearly
independent hyperplanes, for some $k<n$. In this case, we can assume $V=V_k\times\mathbb R^{n-k}$, where
$V_k$ is an infinite cone in $\mathbb R^k$, and then by Lemma \ref{existence in cone},
the solution $u$ is a function of $(x_1, \cdots, x_k)\in \mathbb R^k$.

\section{Domains and Their Tangent Cones}\label{sec-Domains}

In this section, we discuss bounded Lipchitz domains whose boundaries consist of finitely many 
$C^{1,1}$-hypersurfaces locally and focus on the relation between these domains and their tangent cones. 
To do this, we will place these domains in a good position and express boundaries of the Lipchitz domains 
and boundaries of the tangent cones by functions satisfying the same algebraic relation. At the first glance, 
this is a tedious way to describe such a relation and does not seem necessary. As mentioned in the introduction, 
an important step in the derivation of the asymptotic expansion in the proof of Theorem \ref{main reslut}
is to construct two sets, one inside the domains and another containing the domains. The algebraic 
relation governing the relation between the domains and their tangent cones will provide an easy 
description to construct these two sets. As we will see, the same algebraic relation determines four sets, 
the domains bounded by $k$ $C^{1,1}$-hypersurfaces, their tangent cones bounded by 
$k$ hyperplanes, the sets inside the domains bounded by $k$ spheres, and the sets containing the domains 
also bounded by $k$ spheres.

We start our discussion with infinite cones,  which
serve as tangent cones. We emphasize  that
all cones in this paper are solid.

Any finitely many hyperplanes passing the origin divide $\mathbb R^n$ into finitely many connected
components.
Each component is a cone. In the following, we discuss unions of these components.

We first introduce the concept of signed distances.
Let $P$ be a hyperplane with a unit normal vector $\nu$ and $p$ be a point on $P$.
Then, the signed distance of $x$ with respect to $\nu$ is defined by
\begin{equation}\label{oriented distance-for plane}
d(x)=
\begin{cases}
|\text{dist}(x,P)|
& \text{if }  \langle x-p ,\nu \rangle >0,\\
0
& \text{if } x \in P,\\
-|\text{dist}(x,P)|
& \text{if } \langle x- p,\nu \rangle < 0.\\
\end{cases}
\end{equation}
Obviously, the signed distance is independent of the choice of $p\in P$.

Fix an integer $ k\ge 2$. 

\begin{definition}\label{def-cone-lipschitz}
Let $P_1, \cdots, P_k$ be $k$ hyperplanes in $\mathbb R^n$ passing the origin 
with mutually distinct normal vectors and 
let $V$ be a Lipschitz infinite cone. 
Then, $V$ is called to be bounded by $P_1, \cdots, P_k$ if 
$$\partial V\subseteq \bigcup_{i=1}^k  P_i.$$  
We call $F_i=\partial V\cap P_i$ a face of $V$. For convenience, we always assume  
$\overline{\partial V \backslash P_i}\neq
\partial V$, for each $i=1,\cdots, k$.
\end{definition}

Next, we put the hyperplanes $P_1, \cdots, P_k$ and the cone $V$ 
as in Definition \ref{def-cone-lipschitz} in a standard position. 
We choose a coordinate system such that, for each $i=1, \cdots, k$,
$$P_i=\{x\in\mathbb R^n:\, x_n=L_i(x')\},$$
for some linear function $L_i$ in $\mathbb R^{n-1}$, and 
$$V=\{x\in\mathbb R^n:\, x_n>g(x')\},$$
for some $g(x')\in \{L_1(x'), L_2(x'),\cdots,L_k(x') \}$.
Then, 
$$\partial V=\{x\in\mathbb R^n:\, x_n=g(x')\}.$$
Next, let $\nu_1, \cdots, \nu_k$ be unit normal vectors of $P_1, \cdots, P_k$, respectively, such that, 
for any $i=1, \cdots, k$,
\begin{equation}\label{eq-InnerNormal}\langle \nu_i, e_n\rangle >0.\end{equation}
The vectors $\nu_1, \cdots, \nu_k$ are called the {\it inner} unit normal vectors associated with the cone $V$.
Moreover, set 
\begin{align*}H_{i}^{1}&=\{x\in\mathbb R^n:\,x_n>L_i(x')\},\\
H_{i}^{-1}&=\{x\in\mathbb R^n:\,x_n\leq L_i(x')\},\end{align*}
and
\begin{equation}\label{eq-definition-cone}V_{(l_1,\cdots,l_k)}=\bigcap_{i=1}^{k}H_{i}^{l_{i}},\end{equation}
where $l_i=1$ or $-1$ for each $i=1,\cdots,k$. Then, the  Lipschitz infinite cone $V$ as in Definition \ref{def-cone-lipschitz}
can be expressed by the union of some $V_{(l_1,\cdots,l_k)}$, i.e., 
\begin{equation}\label{eq-Conecomponet}V= \bigcup V_{(l_1,...,l_k)}, \end{equation}
where the union is over a finite collection of vectors of the form $(l_1,\cdots,l_k)$,  
with $l_i=1$ or $-1$ for each $i=1,\cdots,k$.

It is straightforward to verify the following result. 

\begin{lemma}\label{lemma-cone-relation} Let $V$ be a  Lipschitz infinite cone bounded by $k$ 
hyperplanes $P_1, \cdots, P_k$ with mutually distinct normal vectors in the above setting. 
Then, 

$\operatorname{(i)}$ $V_{(\underbrace{1,\cdots,1}_{k})}\subseteq V$ and 
$V_{(\underbrace{-1,\cdots,-1}_{k})}\bigcap V = \emptyset$;

$\operatorname{(ii)}$  if $(l_1, \cdots, l_k) \leq (m_1, \cdots, m_k)$
and
$V_{(l_1, \cdots, l_k) }\subseteq V,$
then, 
$V_{(m_1, \cdots, m_k) }\subseteq V.$
\end{lemma}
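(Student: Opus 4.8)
The plan is to establish Lemma \ref{lemma-cone-relation} by working directly with the explicit description of $V$ as a union of components $V_{(l_1,\cdots,l_k)}$ given in \eqref{eq-Conecomponet}, together with the sign conventions fixed by \eqref{eq-InnerNormal}. Recall that $H_i^1=\{x_n>L_i(x')\}$ and $H_i^{-1}=\{x_n\le L_i(x')\}$, that $V=\{x_n>g(x')\}$ with $g\in\{L_1,\cdots,L_k\}$, and that $\partial V=\{x_n=g(x')\}$.

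First I would prove (i). For the inclusion $V_{(1,\cdots,1)}\subseteq V$: if $x\in V_{(1,\cdots,1)}=\bigcap_i H_i^1$, then $x_n>L_i(x')$ for every $i$, so in particular $x_n>L_j(x')=g(x')$ where $j$ is the index with $g=L_j$; hence $x\in V$. For the disjointness $V_{(-1,\cdots,-1)}\cap V=\emptyset$: if $x\in V_{(-1,\cdots,-1)}=\bigcap_i H_i^{-1}$, then $x_n\le L_i(x')$ for every $i$, so $x_n\le g(x')$, which means $x\notin V=\{x_n>g(x')\}$. Both statements are immediate once one uses that $g$ is literally one of the $L_i$'s; this is exactly the reason for putting $V$ in the standard position described before the lemma.

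Next I would prove (ii), the monotonicity statement, where the partial order on sign vectors is the componentwise one with $-1<1$. Suppose $(l_1,\cdots,l_k)\le(m_1,\cdots,m_k)$ and $V_{(l_1,\cdots,l_k)}\subseteq V$. The key geometric observation is that if $l_i\le m_i$ then $H_i^{l_i}\cap\{x_n=L_i(x')\}$-closure considerations aside, one has $H_i^{m_i}\supseteq H_i^{l_i}$ fails in general, so a direct containment of the intersections is not available; instead I would argue as follows. Take $x\in V_{(m_1,\cdots,m_k)}$, so $x$ satisfies the sign condition $\ell_i=m_i$ relative to each $P_i$. I want to show $x\in V$, i.e. $x_n>g(x')$. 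Consider moving from $x$ by continuously decreasing the relevant coordinate or, more robustly, use the fact that $V$ is a union of full-dimensional components $V_{(n_1,\cdots,n_k)}$ and that any such component with a sign vector $\ge(l_1,\cdots,l_k)$ that intersects $V$ in an open set must actually lie in $V$; since $V_{(m_1,\cdots,m_k)}$ and $V_{(l_1,\cdots,l_k)}$ are adjacent across the faces where signs differ, and $V_{(l_1,\cdots,l_k)}\subseteq V$ while $V$ is open with $\partial V\subseteq\bigcup P_i$, a component sharing a face with a subset of $V$ is itself either inside $V$ or outside $V$; one rules out "outside" by noting that crossing from a component inside $V$ to one outside $V$ requires crossing $\partial V\subseteq\bigcup_i P_i$ along a single hyperplane $P_i$, i.e. flipping exactly one sign $l_i$ to $m_i$, and the structure of $g=L_j$ forces the component on the $+$ side of $P_j$ to be in $V$ — hence moving "up" in the order keeps us in $V$. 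I would make this precise by an induction on $\sum_i(m_i-l_i)/2$, the number of indices where the two vectors differ, reducing to the single-flip case and there invoking directly that $V=\{x_n>g(x')\}$ with $g\in\{L_i\}$.

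The main obstacle I anticipate is the single-flip step in (ii): showing that if $V_{(\cdots,l_i,\cdots)}\subseteq V$ with $l_i=-1$, then $V_{(\cdots,1,\cdots)}\subseteq V$. The subtlety is that $V_{(l_1,\cdots,l_k)}\subseteq V$ is a nontrivial hypothesis — it says this particular component happens to be one of the pieces in the union \eqref{eq-Conecomponet} (or contained in that union) — and I must extract from it, together with the fact that $\partial V$ is governed by a single linear function $g=L_j$, that flipping the $i$-th sign upward cannot take us out of $\{x_n>g(x')\}$. If $i\ne j$, the hyperplane $P_i$ does not touch $\partial V$ in the region under consideration (or only along lower-dimensional sets), so the two adjacent components lie on the same side of $\partial V$; if $i=j$, the $+$ side of $P_j$ is by definition the side where $x_n>L_j(x')=g(x')$, which is exactly $V$. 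Handling the first alternative cleanly — that $V_{(l_1,\cdots,l_k)}$ and its $i$-th flip are not separated by $\partial V$ when $i\ne j$ — is where I would spend the most care, likely by a connectedness argument: their union minus $P_i$ is connected and does not meet $\partial V$, so it lies entirely in $V$ or entirely in $V^c$, and the hypothesis pins down which. I expect the rest to be routine bookkeeping with the definitions in \eqref{eq-definition-cone} and \eqref{eq-Conecomponet}, consistent with the paper's remark that the verification is straightforward.
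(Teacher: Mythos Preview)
The paper itself offers no proof; it simply asserts that the verification is straightforward. Your argument for (i) is correct once one reads the index $j$ as depending on $x'$: the hypothesis is that $g(x')\in\{L_1(x'),\ldots,L_k(x')\}$ pointwise, not that $g$ equals some fixed $L_j$ globally. With that reading, both inclusions in (i) are immediate exactly as you wrote.

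For (ii) your overall strategy---induction on the number of flipped indices, reducing to a single flip, then a topological argument across the shared face on $P_i$---is the right one, but two details in your sketch need repair. First, the set ``their union minus $P_i$'' is \emph{not} connected: writing $W=\bigcap_{j\ne i}H_j^{l_j}$, one has $V_{(l)}\cup V_{(m)}=W$, and $\mathrm{int}(W)\setminus P_i$ splits into the two convex open pieces $\mathrm{int}(V_{(l)})$ and $\mathrm{int}(V_{(m)})$, so connectedness alone does not transport the inclusion from one piece to the other. Second, the dichotomy ``$i=j$ versus $i\ne j$'' cannot be posed globally, since $j=j(x')$ varies. The clean fix is local: choose $p\in\mathrm{int}(W)\cap P_i$ (nonempty whenever both pieces are), and note $p\in\overline V$ because $p\in\overline{\mathrm{int}(V_{(l)})}\subseteq\overline V$. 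If $p\in V$, then, as $V$ is open, $\mathrm{int}(V_{(m)})$ meets $V$ near $p$ and hence, being connected and disjoint from $\partial V$, lies in $V$. If instead $p\in\partial V$, then $p_n=g(p')=L_i(p')$ while $p\notin P_j$ for $j\ne i$, so by continuity $g=L_i$ on a neighborhood of $p'$; thus near $p$ one has $V=\{x_n>L_i(x')\}=H_i^1$, which contains $\mathrm{int}(V_{(m)})$. Either way $\mathrm{int}(V_{(m)})\subseteq V$, and passing from the interior to $V_{(m)}$ itself is a routine closure check. This is presumably the ``straightforward'' verification the paper has in mind.
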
 

Here and hereafter, $(l_1, \cdots, l_k) \leq (m_1, \cdots, m_k)$ simply means $l_i \leq m_i$ for each $i=1, \cdots, k$.

\smallskip 

We point out that the interior of $\partial V \bigcap P_i$ may have more than one connected components, 
even for $k\le n$. 

\begin{example}\label{example-4components} Let $P_1, P_2, P_3$ be $3$
linearly independent hyperplanes in $\mathbb R^3$ satisfying \eqref{eq-InnerNormal} for $n=3$ 
and $V\subseteq \mathbb R^{3}$ be an infinite cone given by  
$$V=V_{(1,1,1)}\bigcup V_{(-1,1,1)} \bigcup V_{(1,-1,1)} \bigcup V_{(1,1,-1)}.$$
The interior of each $\partial V \bigcap P_i$  has two connected components.\end{example}

\begin{definition}\label{def-cone-same-relation}
Let $V_1, V_2\subseteq \mathbb R^{n}$ be two infinite cones bounded 
by two sets of 
hyperplanes
$P_{1,1}, \cdots, P_{1,k}$ and $P_{2,1}, \cdots, P_{2,k}$, respectively. Then,  $V_1$ and $V_2$ 
are said to {\it satisfy the same relation} if the collections of  
$(l_{1,1}, \cdots, l_{1,k})$ in \eqref{eq-Conecomponet} for $V_1$ 
and of $(l_{2,1}, \cdots, l_{2,k})$ in \eqref{eq-Conecomponet} for $V_2$ are identical.
\end{definition}

Let $V \subseteq \mathbb R^{n} $ be an infinite cone as in Definition \ref{def-cone-lipschitz}
and $\nu_1, \cdots, \nu_k$ be its inner unit normal vectors.
Now, we require $k\le n$ and $\nu_1, \cdots, \nu_k$ are linearly independent. 
By a rotation,  we assume $V=
V_{k} \times \mathbb R^{n-k}$, with $V_k\subset \mathbb R^k$. 
Then for any $x $, the projection from $x$ to $\mathbb R^k\times\{0\}$ can
be uniquely determined by
$d_{1}(x),...,d_k(x)$, where $d_i$ is the signed distance form $x$ to $P_i$ with respect to $\nu_i$.
With such a 
one-to-one correspondence between $x\in \mathbb R^k\times \{0\}$ and $(d_1, \cdots, d_k)$,
we rewrite the solution of
\eqref{eq-MainEq}-\eqref{eq-MainBoundary} for $\Omega=V$ in Theorem
\ref{existence in cone} as
\begin{equation}\label{eq-Solution-Cone-d-coordinates-nd2}f_{V}(d_1(x) ,...,
d_k(x))=u_{V}(x).\end{equation}
If we treat
$\nu_1, \cdots, \nu_k$  as column vectors,
then the matrix $(\nu_1, \cdots, \nu_k)$ is a $k\times n$ matrix. By the linear independence,
the $k\times k$ matrix $(\nu_1, \cdots, \nu_k)^T(\nu_1, \cdots, \nu_k)$ is invertible. Set
\begin{equation}\label{eq-LinearIndependence}
\sigma(P_1, \cdots, P_k)=\|((\nu_1, \cdots, \nu_k)^T(\nu_1, \cdots, \nu_k))^{-1}\|.\end{equation}
We also note
$$\{d_1>0, \cdots, d_k>0\}\subset V,\quad \{d_1<0, \cdots, d_k<0\}\cap V=\emptyset.$$

For $k=n$, 
$\bigcap_{j \neq i} P_j$ is an {\it edge} of $V$, which is transversal to $P_i$. In the following,
we always denote by $\mu_i$ the unique unit vector such that
\begin{equation}\label{sec-Definition-mu}
\mu_i\in\bigcap_{j \neq i} P_j, \quad \langle \mu_i, \nu_i\rangle>0.
\end{equation}
Then, we can check 
$$\{d_1>0, \cdots, d_n>0\}=\{t_1\mu_1+\cdots+t_n\mu_n:\, t_1>0, \cdots, t_n>0\},$$
and
$$\{d_1<0, \cdots, d_n<0\}=\{t_1\mu_1+\cdots+t_n\mu_n:\, t_1<0, \cdots, t_n<0\}.$$
In Example \ref{example-4components}, edges consist of three lines, instead of three rays 
we usually anticipate. 

\smallskip

Next, we turn our attention to domains.
We always assume that $\Omega$ is a bounded Lipschitz domain, with $x_0\in\partial\Omega$.
We can define the {\it tangent cone} of $\Omega$ at $x_0$ by blowing up
$\Omega$ near $x_0$. We will not present such a definition for general domains. In the following,
we describe an equivalent way to construct tangent cones
for domains bounded by finitely many $C^{1,1}$-hypersurfaces.

\smallskip

Let $S$ be a $C^{1,1}$-hypersurface in a neighborhood of $x_0\in S$,
and $\nu$  be a continuous unit normal vector field over $S$ near $x_0$.
Here and hereafter, we always assume $x_0$ is an interior point of $S$.
For any $x$ close to $x_0$, set $p_{x}$ to be the point on $S$ with the least distance to $x$.
Then, the {\it signed distance} of $x$ to $S$ with respect to $\nu$ is defined by
\begin{equation}\label{oriented distance}
d(x)=
\begin{cases}
|xp_x|
& \text{if }  \langle x-p_x ,\nu_{p_x} \rangle >0,\\
0
& \text{if } x \in S,\\
-|xp_x|
& \text{if } \langle x- p_x,\nu_{p_x} \rangle < 0.\\
\end{cases}
\end{equation}

Fix an integer $k\ge 2$. 

\begin{definition}\label{def-Domain-lipschitz}
Let $S_1, \cdots, S_k$  be $k$ $C^{1,1}$-hypersurfaces passing $x_0$, with 
mutually distinct normal vectors of tangent planes of
$S_1, \cdots, S_k$  at $x_0$, and let $\Omega$ be a bounded Lipschitz domain in $\mathbb R^n$ with 
$x_0\in\partial\Omega$. Then, $\Omega$ is called to be bounded by $S_1, \cdots, S_k$ near $x_0$
if, for some $R>0$,  
$$\partial \Omega \bigcap B_{R}(x_0)\subseteq \bigcup_{i=1}^k  S_i.$$  
We always assume $\overline{ \partial \Omega \bigcap B_{r}(x_0)\backslash S_i}\neq
\partial \Omega \bigcap B_{r}(x_0)$, for any $r \leq R$ and any $i=1,\cdots,k$. \end{definition}

Let $S_1, \cdots, S_k$ and $\Omega$ be as in Definition \ref{def-Domain-lipschitz}. 
We choose a coordinate system such that, 
for  each $i=1, \cdots, k$,
$$S_i\cap B_{R}(x_0)=\{x\in B_{R}(x_0):\, x_n=f_i(x')\},$$
for some $C^{1,1}$-function $f_i$ in $B_{R}'(x_0')$, and 
$$\Omega\cap B_{R}(x_0)=\{x\in B_{R}(x_0):\, x_n>g(x')\},$$
for some $g(x')\in \{f_1(x'), f_2(x'),\cdots,f_k(x') \}$.
Then, 
$$\partial \Omega\cap B_{R}(x_0)=\{x\in B_{R}(x_0):\, x_n=g(x')\}.$$
Next, let $\nu_1, \cdots, \nu_k$ be unit normal vectors of 
the tangent planes of $S_1, \cdots, S_k$ at $x_0$ 
such that, for any $i=1, \cdots, k$,
$$\langle \nu_i, e_n\rangle >0.$$ 
The vectors $\nu_1, \cdots, \nu_k$ are called the {\it inner} unit
normal vectors associated with $\partial\Omega$ at $x_0$.
Moreover, set 
\begin{align*}S_{i}^{1}&=\{x\in B_{R}(x_0):\, x_n>f_i(x')\},\\
S_{i}^{-1}&=\{x\in B_{R}(x_0):\, x_n\leq f_i(x')\},\end{align*}
and 
$$\Omega_{(l_1,\cdots,l_k)}=\bigcap_{i=1}^{k}S_{i}^{l_{i}},$$
where $l_i=1$ or $-1$ for each $i=1,\cdots,k$. Then, $\Omega\cap B_{R}(x_0)$
can be expressed by the union of some $\Omega_{(l_1,...,l_k)}$, i.e., 
\begin{equation}\label{eq-Domaincomponet}\Omega\bigcap B_{R}(x_0)= \bigcup \Omega_{(l_1,\cdots,l_k)}, \end{equation}
where the union is over a finite collection of vectors of the form $(l_1,\cdots,l_k)$,  with $l_i=1$ or $-1$ for each $i=1,\cdots,k$.

It is straightforward to verify the following result. 

\begin{lemma}\label{lemma-domain-relation} Let $\Omega$ be a bounded Lipschitz domain bounded by $k$ 
$C^{1,1}$-hypersurfaces $S_1, \cdots, S_k$ in $B_{R}(x_0)$ 
for some $x_0\in\partial\Omega$ and 
some $R>0$ in the above setting, with mutually distinct normal vectors of the tangent planes of 
$S_1, \cdots, S_k$ at $x_0$. 
Then, 

$\operatorname{(i)}$ $\Omega_{(\underbrace{1,\cdots,1}_{k})}\subseteq \Omega$ and 
$\Omega_{(\underbrace{-1,\cdots,-1}_{k})}\bigcap \Omega = \emptyset$;

$\operatorname{(ii)}$  if $(l_1,\cdots,l_k) \leq (m_1,\cdots,m_k)$
and
$\Omega_{(l_1,\cdots,l_k) }\subseteq \Omega,$
then, 
$\Omega_{(m_1,\cdots,m_k) }\subseteq \Omega.$
\end{lemma}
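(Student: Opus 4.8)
The statement to prove is Lemma \ref{lemma-domain-relation}, which asserts two monotonicity-type properties for the pieces $\Omega_{(l_1,\dots,l_k)}$ into which a domain bounded by $C^{1,1}$-hypersurfaces decomposes near a singular boundary point. This is the direct analogue of Lemma \ref{lemma-cone-relation} for cones, and the plan is to reduce the domain case to the cone case by using the $C^{1,1}$-regularity to control the hypersurfaces $S_i$ by their tangent planes at $x_0$.

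The plan is as follows. First I would reduce to a small ball: choose $R$ small enough that each $f_i$ is $C^{1,1}$ on $B_R'(x_0')$ with $f_i(x_0') = x_0'$-component and $Df_i(x_0')$ equal to the gradient of the tangent plane $P_i$; by shrinking $R$ the hypersurface $S_i$ stays in a thin slab around $P_i$, and the sets $S_i^{\pm 1}$ stay ``close'' to the half-spaces $H_i^{\pm 1}$ cut out by $P_i$. For part (i): by Definition \ref{def-Domain-lipschitz} and the normalization that $\langle \nu_i, e_n\rangle > 0$, the half-space $S_i^1 = \{x_n > f_i(x')\}$ is exactly the side of $S_i$ into which $\Omega$ opens along that face, so $\Omega_{(1,\dots,1)} = \bigcap_i S_i^1$ sits inside $\Omega$ near $x_0$ (this is essentially the definition of $\Omega$ being ``bounded by'' the $S_i$ with $g(x') \in \{f_1,\dots,f_k\}$ and $\Omega = \{x_n > g\}$, which forces $\{x_n > f_i \ \forall i\} \subseteq \{x_n > g\} = \Omega$). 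Symmetrically $\Omega_{(-1,\dots,-1)} = \bigcap_i \{x_n \le f_i(x')\} \subseteq \{x_n \le g(x')\}$, which is disjoint from $\Omega \cap B_R(x_0)$. For part (ii): since the decomposition \eqref{eq-Domaincomponet} expresses $\Omega \cap B_R(x_0)$ as a union of certain $\Omega_{(l_1,\dots,l_k)}$, and since the pieces $\Omega_{(l_1,\dots,l_k)}$ are pairwise disjoint (up to boundary) and their closures cover $B_R(x_0)$, membership $\Omega_{(l_1,\dots,l_k)} \subseteq \Omega$ is equivalent to the tuple $(l_1,\dots,l_k)$ belonging to the index collection $\mathcal{L}$ appearing in \eqref{eq-Domaincomponet}; so it suffices to show $\mathcal{L}$ is an up-set, i.e., closed under increasing the coordinates.

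To show $\mathcal{L}$ is an up-set I would argue as follows. Suppose $(l_1,\dots,l_k) \in \mathcal{L}$, so $\Omega_{(l_1,\dots,l_k)} \subseteq \Omega$, and suppose $m = (m_1,\dots,m_k) \ge (l_1,\dots,l_k)$. It is enough to treat the case where $m$ differs from $l$ in a single coordinate, say $l_{i_0} = -1$ and $m_{i_0} = 1$, and then iterate. Pick a point $x \in \Omega_{(m_1,\dots,m_k)}$ with $x$ close to $x_0$; I want to show $x \in \Omega$. The idea is a connectedness/continuity argument: $x$ lies in $S_i^{m_i} = S_i^{l_i}$ for $i \ne i_0$ and in $S_{i_0}^{1}$. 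Consider sliding $x$ across $S_{i_0}$: because $S_{i_0}$ is $C^{1,1}$ and, near $x_0$, is a graph $x_n = f_{i_0}(x')$, one can connect $x$ to a nearby point $x'' \in \Omega_{(l_1,\dots,l_k)}$ (i.e. with $x''_n \le f_{i_0}(x'')$ and $x'' \in S_i^{l_i}$ for $i \ne i_0$) by a short path that crosses $S_{i_0}$ transversally and crosses no other $S_i$ — this is possible because the normal vectors of the tangent planes are mutually distinct, hence the other hypersurfaces $S_i$ stay a definite distance away along such a path for $R$ small. Along this path the only boundary hypersurface encountered is $S_{i_0}$. Since $x'' \in \Omega_{(l_1,\dots,l_k)} \subseteq \Omega$, and since crossing $S_{i_0}$ from the $l_{i_0}=-1$ side to the $m_{i_0}=+1$ side is precisely crossing from $\Omega^c$-looking region toward the $\Omega$-side determined by the inner normal orientation (recall $g \in \{f_1,\dots,f_k\}$ and the orientation of $\nu_{i_0}$ points into $\Omega$ along the $S_{i_0}$-face), the point $x$ remains in $\Omega$. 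A cleaner way to phrase the same thing: $\Omega \cap B_R(x_0) = \{x_n > g(x')\}$ with $g = \max$ over the active faces in an appropriate sense, and increasing $l_i$ only enlarges the region $\bigcap S_i^{l_i}$ in the $+e_n$ direction, which can only move one further inside $\{x_n > g(x')\}$; combined with $\Omega_{(l_1,\dots,l_k)} \subseteq \Omega$ this gives $\Omega_{(m_1,\dots,m_k)} \subseteq \Omega$.

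The main obstacle is not any deep estimate but getting the combinatorial/topological bookkeeping exactly right: one must be careful that the pieces $\Omega_{(l_1,\dots,l_k)}$ genuinely partition $B_R(x_0)$ (their interiors are disjoint and closures cover), that ``$\Omega_{(l_1,\dots,l_k)} \subseteq \Omega$'' is equivalent to the tuple lying in the index set of \eqref{eq-Domaincomponet}, and that the $C^{1,1}$ graphs can be crossed one at a time without accidentally exiting $B_R(x_0)$ or crossing another $S_i$ — this last point is where the hypothesis of mutually distinct tangent-plane normals and the freedom to shrink $R$ are used. Because the authors describe this as ``straightforward to verify,'' I expect the intended proof is essentially the reduction to the tangent-plane picture where Lemma \ref{lemma-cone-relation} applies verbatim, with $C^{1,1}$-closeness of $S_i$ to $P_i$ ensuring the same incidence combinatorics holds in the ball $B_R(x_0)$ for $R$ small; I would write it in that spirit rather than grinding through the path construction in detail.
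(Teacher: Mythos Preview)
The paper does not give a proof of this lemma (nor of the parallel Lemma~\ref{lemma-cone-relation}); it simply says ``It is straightforward to verify the following result.'' So there is no paper proof to compare against, and your sketch is in the right spirit. Part (i) is exactly as you say: since $g(x')\in\{f_1(x'),\dots,f_k(x')\}$ and $\Omega\cap B_R(x_0)=\{x_n>g(x')\}$, the inclusions $\Omega_{(1,\dots,1)}\subseteq\Omega$ and $\Omega_{(-1,\dots,-1)}\cap\Omega=\emptyset$ are immediate.

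For part (ii) your reduction to showing the index set $\mathcal L$ is an up-set is correct, and the one-coordinate-at-a-time path argument is the right mechanism, but two points should be tightened. First, the phrase ``the other hypersurfaces $S_i$ stay a definite distance away'' is not literally true, since all $S_i$ pass through $x_0$; what you actually need is a path from $x\in\Omega_{(m)}$ to a point $x''$ on the other side of $S_{i_0}$ along which the \emph{signs} of the signed distances $d_j$, $j\ne i_0$, do not change. In the cone case this is clean: move along the edge direction $\mu_{i_0}\in\bigcap_{j\ne i_0}P_j$, so $d_j$ is constant for $j\ne i_0$ while $d_{i_0}$ varies linearly; for the domain case one either uses the corresponding curved path (which exists by $C^{1,1}$ regularity and linear independence of the normals) or, as you note at the end, simply observes that by construction $\Omega$ and its tangent cone $V_{x_0}$ share the same index set $\mathcal L$ (this is exactly how $V_{x_0}$ is defined in \eqref{tangent cone}), so (ii) for $\Omega$ reduces to (ii) for $V_{x_0}$. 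Second, the clause ``crossing from the $-1$ side to the $+1$ side keeps you in $\Omega$'' should be argued by contradiction rather than by orientation alone: the path meets $\bigcup_j P_j$ only at the single point where it crosses $P_{i_0}$; if that point lay on $\partial V$, then locally $V$ would be the $+1$ side of $P_{i_0}$, forcing the nearby points on the $-1$ side (including $x''$) into $V^c$, contradicting $x''\in V_{(l)}\subseteq V$. Hence the crossing point is not on $\partial V$, the path never meets $\partial V$, and $x\in V$. With these two adjustments your argument is complete.
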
 

For domains as in Definition \ref{def-Domain-lipschitz}, we can characterize their tangent cones easily. 
Let $\Omega\subset \mathbb{R}^{n}$ be a bounded Lipschitz domain bounded by $k$ $C^{1,1}$-hyperplanes 
$S_1, \cdots, S_k$ in a neighborhood of $x_0\in\partial\Omega$, satisfying \eqref{eq-Domaincomponet}.
Suppose that the tangent plane $P_i$ of $S_i$ at $x_0$ is
given by $x_n=L_i(x')$, for each $i=1, \cdots, k$. Then,  the tangent cone
$V_{x_0}$ of $\Omega$ at $x_0$ is the infinite cone given by
\begin{equation}\label{tangent cone}
V_{x_0}= \bigcup V_{(l_1,\cdots,l_k)},
\end{equation}
where $V_{(l_1,\cdots,l_k)}$ is defined in \eqref{eq-definition-cone}, 
and the union over $(l_1,\cdots,l_k)$ in \eqref{tangent cone} is the same as that in \eqref{eq-Domaincomponet}.

To end this section, we make a remark concerning the necessity  to put cones and domains in standard positions. 
Recall that we start with a bounded Lipschitz domain which is bounded by finitely many $C^{1,1}$-hypersurfaces near 
a boundary point. By putting this domain in the standard position, we can characterize its tangent cone 
easily, as demonstrated in the proceeding paragraph. The advantage here is to simply take the union over the same set 
of finitely elements in the form $(l_1,\cdots,l_k)$,  with $l_i=1$ or $-1$ for each $i=1,\cdots,k$.
In Section \ref{sec-Singular-1}, we will construct more sets  bounded by spheres along the same line.

\section{Solutions in Different Cones}\label{sec-DifferentCones}

In section, we compare solutions in different cones.

We first prove a basic result. We point out that, for an infinite cone with a Lipschitz boundary,
there always exists a uniform infinite exterior cone at each point of its boundary.

\begin{lemma}\label{slu-dunder-affine-tr}
Let $V_1, V_2\subseteq \mathbb R^{n}$ be two infinite cones with Lipschitz boundaries 
and vertices at the origin and
$u_i$ be the  nonnegative solution of \eqref{eq-MainEq}-\eqref{eq-MainBoundary}
for $\Omega=V_i$, $i=1,2$. Assume there exists a linear transform $T =O_1 AO_2$, for some
$O_1, O_2 \in O(n)$ and $A \in GL(n)$, such that $TV_1= V_2 $. Then, there exist a
positive constant $\epsilon_0$,  depending only on $n$
and the size of  exterior
cones, such that,  if $\|A-I\| \leq
\epsilon_0$, then for any $x \in V_2 $,
$$(1+C\|A-I\|)^{-1} u_2 (x)\leq \ u_1 (T^{-1}x) \leq (1+C\|A-I\|) u_2 (x),$$
where  $C$ is a positive constant depending only on $n$ and the size of  exterior
cones.
\end{lemma}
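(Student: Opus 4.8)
The plan is to reduce the comparison between $u_1\circ T^{-1}$ and $u_2$ to an application of the maximum principle, after first absorbing the orthogonal factors $O_1,O_2$ and exploiting the conformal/scaling structure of the equation. First I would observe that since $O_1,O_2\in O(n)$ map the equation \eqref{eq-MainEq} to itself and preserve cones, distances, and the size of exterior cones, we may assume without loss of generality that $T=A\in GL(n)$ with $\|A-I\|\le\epsilon_0$; replacing $A$ by its polar factor if convenient, one can even assume $A$ is symmetric positive definite, though this is not essential. Set $w(x)=u_1(A^{-1}x)$ for $x\in V_2$. A direct computation shows $w$ satisfies a perturbed equation
\begin{equation*}
a_{ij}w_{x_ix_j}=\tfrac14 n(n-2)w^{\frac{n+2}{n-2}}\quad\text{in }V_2,
\end{equation*}
with $(a_{ij})=(A^{-1})(A^{-1})^T$, so $\|(a_{ij})-I\|\le C\|A-I\|$ when $\epsilon_0$ is small. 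The point is that $w$ is \emph{not} a solution of \eqref{eq-MainEq} in $V_2$ but a solution of a uniformly elliptic equation $C^0$-close to it, and $w=\infty$ on $\partial V_2$ with the same blow-up rate as $u_2$.

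The main step is then to build sub- and supersolutions of \eqref{eq-MainEq} in $V_2$ out of $w$ by a multiplicative perturbation. I would try an ansatz of the form $\overline w=(1+\eta)w$ and $\underline w=(1-\eta)w$ for a small constant $\eta=C\|A-I\|$ (the scaling property \eqref{eq-scaling} guarantees $w$ is homogeneous of degree $-\frac{n-2}{2}$, so a constant multiple is the natural perturbation to try). Plugging $\overline w$ into \eqref{eq-MainEq}: on the left we get $(1+\eta)\Delta w$, and writing $\Delta w=a_{ij}w_{x_ix_j}+(\delta_{ij}-a_{ij})w_{x_ix_j}=\tfrac14 n(n-2)w^{\frac{n+2}{n-2}}+O(\|A-I\|)|D^2w|$, while the right side is $\tfrac14 n(n-2)(1+\eta)^{\frac{n+2}{n-2}}w^{\frac{n+2}{n-2}}$. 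Since $(1+\eta)^{\frac{n+2}{n-2}}-(1+\eta)\ge c\,\eta\, (1+\eta)$ for small $\eta>0$ with $c=c(n)>0$, the gain on the nonlinear term dominates the error term provided the error $\|A-I\|\,|D^2w|$ is controlled by $\eta\, w^{\frac{n+2}{n-2}}=\eta\,w\cdot w^{\frac{4}{n-2}}$. This is exactly where Lemma \ref{lemma-GradientEstimates} enters: $|D^2w(x)|\le C\,u_1(A^{-1}x)/d(A^{-1}x,\partial V_1)^2 \le C\,w(x)/d(x,\partial V_2)^2$, and $w(x)^{\frac{4}{n-2}}$ is comparable to $1/d(x,\partial V_2)^2$ by Lemma \ref{lemma-ExteriorCone} (using that $V_2$ has a uniform infinite exterior cone, so these estimates hold at \emph{all} points by Remark \ref{rmrk-delta}). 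Hence with $\eta=C\|A-I\|$ for a suitably large $C$, $\overline w$ is a supersolution and, symmetrically, $\underline w$ is a subsolution of \eqref{eq-MainEq} in $V_2$.

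Finally I would apply the maximum principle (comparison principle) for \eqref{eq-MainEq} on $V_2$: since $\underline w\le u_2\le\overline w$ forces, by the boundary behavior ($\overline w,\underline w,u_2$ all blow up at the same rate near $\partial V_2$ and all decay like $|x|^{-\frac{n-2}{2}}$ at infinity by homogeneity), the inequalities $\underline w\le u_2\le \overline w$ throughout $V_2$ — this is the same comparison argument used for uniqueness in cones earlier in the excerpt, using $u_2(x\pm\epsilon e)$ and adding $u_{r,0}$ to handle the boundary and the point at infinity. Unwinding, $(1-C\|A-I\|)u_2(x)\le w(x)=u_1(A^{-1}x)\le (1+C\|A-I\|)u_2(x)$, and rewriting $(1-C\|A-I\|)^{-1}\le 1+C'\|A-I\|$ for $\epsilon_0$ small gives the stated two-sided bound. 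The main obstacle is the perturbation estimate in the second paragraph: one must verify that the second-derivative error term coming from $(a_{ij})-I$ is genuinely lower order compared to the nonlinear gain, uniformly up to the boundary of the cone, which is precisely why the anisotropic/interior gradient estimates of Section \ref{sec-BasicEstimates} and the exterior cone condition are needed rather than just a naive linearization.
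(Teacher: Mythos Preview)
Your proposal is correct and follows essentially the same route as the paper: absorb the orthogonal factors, transport $u_1$ to $V_2$ via $T^{-1}$ to obtain a function satisfying a constant-coefficient elliptic equation $O(\|A-I\|)$-close to the Laplacian, then show that the multiplicative perturbation $(1+C\|A-I\|)\,u_1\circ T^{-1}$ is a supersolution of \eqref{eq-MainEq} by balancing the nonlinear gain $[(1+\eta)^{\frac{n+2}{n-2}}-(1+\eta)]w^{\frac{n+2}{n-2}}$ against the error $(a_{ij}-\delta_{ij})w_{ij}$, the latter controlled via Lemmas \ref{lemma-ExteriorCone}--\ref{lemma-GradientEstimates} and Remark \ref{rmrk-delta}; finally compare with $u_2$ by the maximum principle.

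Two minor remarks. First, the transported coefficients are $(a_{ij})=AA^{T}$ (more precisely $TT^{T}=O_1AA^{T}O_1^{T}$), not $(A^{-1})(A^{-1})^{T}$; this does not affect the argument since both satisfy $\|(a_{ij})-I\|\le C\|A-I\|$. Second, for the reverse inequality the paper simply interchanges the roles of $V_1$ and $V_2$ (replacing $A$ by $A^{-1}$) and repeats the supersolution argument, which is slightly cleaner than your subsolution route: comparing a subsolution that blows up on $\partial V_2$ with the solution $u_2$ (which also blows up) needs the translation-plus-$u_{r,0}$ device you mention, whereas the supersolution comparison is immediate.
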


As noted before, $\|\cdot\|$ is the norm of matrices, considered as transforms in the Euclidean spaces.

\begin{proof}
First, $u_1$ satisfies
$$\Delta u_1  = \frac14n(n-2) u_1^{\frac{n+2}{n-2}}\quad\text{in }V_1.$$
Set $ \widetilde{u}_1 (x) = u_1 (T^{-1}x)$, for any $x\in V_2$, and $(a_{ij}) =
 AA^{T}$. Then,
$$a_{ij} \widetilde{u}_{1\, ij} =
\frac14n(n-2) \widetilde{u}_1^{\frac{n+2}{n-2}}\quad\text{in }V_2,$$
and hence
$$-\Delta \widetilde{u}_1  +  \frac14n(n-2) \widetilde{u}_1^{\frac{n+2}{n-2}}
 =(a_{ij}-\delta_{ij})\widetilde{u}_{1\,ij}\quad\text{in }V_2.$$
Therefore,
\begin{align*}
&-\Delta [(1+C\|A-I\|)\widetilde{u}_1 ] +  \frac14n(n-2) [
(1+C\|A-I\|)\widetilde{u}_1]^{\frac{n+2}{n-2}}\\
&\qquad=\frac14n(n-2) [ (1+C\|A-I\|)^{\frac{n+2}{n-2}} -(1+C\|A-I\|) ]
\widetilde{u}_1^{\frac{n+2}{n-2}} \\
&\qquad\qquad+ (1+C\|A-I\|)  (a_{ij} -
\delta_{ij})\widetilde{u}_{1\,ij}.
\end{align*}
We first choose $\epsilon_0 <{1}/{10}$. Then,
$$\|A^{-1}\| =\|(I-(I-A))^{-1}\| \leq
\frac{1}{1-\|A-I\|},$$ and
$$ (1-\|A-I\| )dist( x,\partial V_2) \leq dist(T^{-1} x,\partial V_1) \leq (1+\|A-I\| )
dist( x,\partial V_2).$$
Next, we note
$$|\delta_{ij}-a_{ij}| \leq \|A^{T}A-I\| \leq 3\|A-I\|,$$
and, by Lemma \ref{lemma-ExteriorCone} and Remark \ref{rmrk-delta},
$$C'\leq dist(x, \partial V_i)^{\frac{n-2}{2}}u_i (x) \leq 2^{\frac{n-2}{2}}.$$
Lemma \ref{lemma-GradientEstimates} implies
$$|(\delta_{ij}-a_{ij})\widetilde{u}_{1\,ij}| \leq C_1\|A-I\| dist(x,\partial
V_2)^{-\frac{n+2}{2}}.$$
We also have
\begin{align*}
&\,[ (1+C\|A-I\|)^{\frac{n+2}{n-2}} -(1+C\|A-I\|) ] \widetilde{u}_1^{\frac{n+2}{n-2}} \\
\geq &\, \frac{4}{n-2}C\|A-I\|\widetilde{u}_1^{\frac{n+2}{n-2}}  \geq C\|A-I\| C_{2} (n)
dist(x,\partial
V_2)^{-\frac{n+2}{2}}.
\end{align*}
By taking $C = C_3>2 {C_{1}}/{C_2}$ and requiring $\epsilon_0 \leq
{1}/{C_3}$, we obtain
$$-\Delta [(1+C\|A-I\|)\widetilde{u}_1 ] +  \frac14n(n-2) [
(1+C\|A-I\|)\widetilde{u}_1]^{\frac{n+2}{n-2}} \geq 0.$$
Therefore, by the maximum principle, we get
$$u_2 (x)\leq \ u_1 (T^{-1}x)(1+C\|A-I\|).$$
Interchanging the position of $u_1$ and $u_2$, we also have
$$ u_1 (T^{-1}x) \leq (1+C\|A^{-1}-I\|) u_2 (x)\leq (1+C\|A-I\|) u_2 (x).$$
In summary, by choosing $\epsilon_0 \leq \min\{{1}/{10},{1}/{C_3}\}$, we get the desired estimate.
\end{proof}

Next, we discuss cones introduced in Definition \ref{def-cone-lipschitz}
and compare different $f_{V}$ introduced in \eqref{eq-Solution-Cone-d-coordinates-nd2}
in different cones.
Lemma \ref{lemma-AnisotropicEstimate} plays an essential role.

\begin{lemma}\label{slu-di-dn dunder-affine-tr-genral}
For a fixed $2\le k\le n$,
let $V_1, V_2\subseteq \mathbb R^{n}$ be two infinite cones satisfying the same relation
as in Definition \ref{def-cone-same-relation} in terms of  linearly independent hyperplanes
$P_{1,1}, \cdots, P_{1,k}$ and $P_{2,1}, \cdots, P_{2,k}$, respectively,
and $u_i$
be the unique nonnegative solution of \eqref{eq-MainEq}-\eqref{eq-MainBoundary} for
$\Omega=V_i$, with $f_{V_i}$ given by \eqref{eq-Solution-Cone-d-coordinates-nd2}, for $i=1, 2$.
Assume there exists a linear transform $T =O_1 AO_2$,
for some $O_1, O_2 \in {O}(n)$ and  $A \in {GL}(n)$, such that $TV_ 1= V_2$ and
$T(\partial V_1\cap P_ {1,j})= \partial V_2\cap P_{2,j}$,  for $j=1,\cdots, k$.
For the positive constant $ \epsilon_{0}$  determined in Lemma \ref{slu-dunder-affine-tr},
if $\|A-I\| \leq \epsilon_{0}$, then
$$(1+C\|A-I\|)^{-1} f_{V_2}(d_1,\cdots, d_k)\leq f_{V_1}(d_1, \cdots, d_k)  \leq (1+C\|A-I\|) 
f_{V_2}(d_1,\cdots, d_k),$$
where  $C$ is some positive constant depending
only on $n$ and $\sigma(P_{i,1},...,P_{i,k})$ as defined in
\eqref{eq-LinearIndependence}.
\end{lemma}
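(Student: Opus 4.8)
The plan is to reduce the statement to an application of Lemma \ref{slu-dunder-affine-tr} together with the anisotropic estimate of Lemma \ref{lemma-AnisotropicEstimate}. The obstacle that prevents a direct quotation of Lemma \ref{slu-dunder-affine-tr} is that $f_{V_i}$ is a function of the signed distances $(d_1,\dots,d_k)$ rather than of the point $x$ itself: the transform $T=O_1AO_2$ maps $V_1$ onto $V_2$ and carries the $j$-th face of $V_1$ onto the $j$-th face of $V_2$, but it does \emph{not} in general send a point with signed distances $(d_1,\dots,d_k)$ to a point with the same signed distances, because $A$ distorts lengths and angles. So the comparison one gets for free is between $f_{V_1}(d_1,\dots,d_k)=u_1(x)$ and $u_2(Tx)=f_{V_2}(\bar d_1,\dots,\bar d_k)$ for a \emph{different} tuple $(\bar d_1,\dots,\bar d_k)$, and the main work is to show that replacing $(\bar d_j)$ by $(d_j)$ costs only a factor $1+C\|A-I\|$.

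First I would put both cones in the standard position of Section \ref{sec-Domains}: write $V_i=V_{i,k}\times\mathbb R^{n-k}$ after a rotation, let $\nu_{i,1},\dots,\nu_{i,k}$ be the inner unit normals, and (for the case $k=n$ first) let $\mu_{i,1},\dots,\mu_{i,n}$ be the associated edge directions as in \eqref{sec-Definition-mu}. Fix $x\in V_2$ with signed distances $d_j=d_j(x)$ to $P_{2,j}$, and set $x'=Tx'' $ where $x''=T^{-1}x\in V_1$ has signed distances $\tilde d_j$ to $P_{1,j}$. By Lemma \ref{slu-dunder-affine-tr} applied to $T$,
$$(1+C\|A-I\|)^{-1}u_2(x)\le u_1(T^{-1}x)\le (1+C\|A-I\|)u_2(x),$$
i.e. $(1+C\|A-I\|)^{-1}f_{V_2}(d_1,\dots,d_k)\le f_{V_1}(\tilde d_1,\dots,\tilde d_k)\le(1+C\|A-I\|)f_{V_2}(d_1,\dots,d_k)$. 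The remaining task is to compare $f_{V_1}(\tilde d_1,\dots,\tilde d_k)$ with $f_{V_1}(d_1,\dots,d_k)$. Using the identities relating signed distances to coordinates in the $\mu$-basis — namely $x=\sum_j t_j\mu_{2,j}$ with each $t_j$ a fixed positive multiple of $d_j$, and likewise $T^{-1}x=\sum_j s_j\mu_{1,j}$ with $s_j$ a fixed positive multiple of $\tilde d_j$ — together with the fact that $T^{-1}=O_2^{-1}A^{-1}O_1^{-1}$ differs from an isometry by $\|A^{-1}-I\|\le C\|A-I\|$ (as in the proof of Lemma \ref{slu-dunder-affine-tr}, once $\epsilon_0<1/10$), I would show that the point $x^*\in V_1$ with signed distances $(d_1,\dots,d_k)$ and the point $T^{-1}x$ with signed distances $(\tilde d_1,\dots,\tilde d_k)$ satisfy the hypothesis \eqref{eq-AnisotripicCondition} of Lemma \ref{lemma-AnisotropicEstimate}, i.e.
$$|\langle (T^{-1}x)-x^*,\mu_{1,i}\rangle|\le C\|A-I\|\,|\langle T^{-1}x,\mu_{1,i}\rangle|\quad\text{for each }i.$$
This is the heart of the argument: it amounts to checking that the linear map sending the $(d_j)$-coordinates to the $(\tilde d_j)$-coordinates is $I+O(\|A-I\|)$, with the implied constant controlled by $\sigma(P_{i,1},\dots,P_{i,k})$, which in turn controls the norms of the change-of-basis matrices $(\mu_{i,1},\dots,\mu_{i,k})^{-1}$ exactly as in \eqref{eq-Estimate-E}. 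Granting this, \eqref{eq-AnisotropicDifference} of Lemma \ref{lemma-AnisotropicEstimate} gives
$$|f_{V_1}(\tilde d_1,\dots,\tilde d_k)-f_{V_1}(d_1,\dots,d_k)|=|u_1(T^{-1}x)-u_1(x^*)|\le C\|A-I\|\,u_1(T^{-1}x),$$
and combining this with the chain of inequalities above (and absorbing the small factors, using $\|A-I\|\le\epsilon_0$) yields the claimed two-sided bound, with $C$ depending only on $n$ and $\sigma(P_{i,1},\dots,P_{i,k})$. For $k<n$ the same argument runs verbatim after the reduction $V_i=V_{i,k}\times\mathbb R^{n-k}$, since by the remark following Lemma \ref{lemma-AnisotropicEstimate} the solution depends only on the first $k$ coordinates and the anisotropic estimate holds in the $k$ transversal edge directions.

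The step I expect to be the main obstacle is the coordinate comparison $|\tilde d_j - (\text{linear in }d)_j|\le C\|A-I\||d|$ with the constant expressed through $\sigma$ rather than through $\|A\|$: one must carefully track that $T$ respects the face labelling (this is where the hypothesis $T(\partial V_1\cap P_{1,j})=\partial V_2\cap P_{2,j}$ is essential — without it the faces could be permuted and the estimate would fail), and that the perturbation of the edge directions $\mu_{1,j}$ under $T^{-1}$, measured in the relevant norms, is governed by the linear independence constant $\sigma$ uniformly in the two cones. Everything else is a routine assembly of the maximum-principle comparison and the anisotropic gradient estimate already in hand.
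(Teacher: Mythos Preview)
Your proposal is correct and follows essentially the same route as the paper: apply Lemma \ref{slu-dunder-affine-tr} to compare $u_2(x)$ with $u_1(T^{-1}x)$, then use the anisotropic estimate \eqref{eq-AnisotropicDifference} of Lemma \ref{lemma-AnisotropicEstimate} to pass from $u_1(T^{-1}x)$ to $u_1(x^*)$, where $x^*\in V_1$ has signed distances $(d_1,\dots,d_k)$. The paper is terser on the distance comparison, simply asserting $|\widehat d_j-d_j|\le \|A-I\|\,|d_j|$ (which follows since $T$ carries $P_{1,j}$ to $P_{2,j}$ and distorts distances by at most $\|A^{\pm1}\|$), whereas you route the same estimate through the $\mu$-basis; both arrive at the hypothesis \eqref{eq-AnisotripicCondition} with $\tau=C\|A-I\|$, and the paper likewise treats only $k=n$ explicitly.
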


\begin{proof} We consider only the case $k=n$. 
Fix $d_{1},d_{2},\cdots,d_n$ such that the corresponding $x\in V_2$.
By Lemma \ref{slu-dunder-affine-tr}, we have
\begin{equation}\label{eq-interior-cone-estimate1}
(1+C\|A-I\|)^{-1} u_2 (x)\leq u_1 (T^{-1}x) \leq (1+C\|A-I\|) u_2 (x).\end{equation}
Note, for each $j=1, \cdots, n$,
$$|\widehat{d}_{j}- d_j| \leq \|A-I\| | d_j|,$$
where $\widehat{d}_{j}$ is the signed distance from $T^{-1}x$ to $P_{1,j}$.
Let $x^*$ be the point in $V_1$ such that its signed distance to $P_{1,j}$ is $d_j$. 
Then, 
$$|(T^{-1}x)_i- x^*_i| \leq C\|A-I\| | x^*_i|.$$
By Lemma \ref{lemma-AnisotropicEstimate}, in particular \eqref{eq-AnisotropicDifference}, 
we have 
\begin{equation}\label{eq-interior-cone-estimate2}
|{u}_1(T^{-1}x) -{u}_1(x^*) | \leq C \|A-I\|{u}_1(x^*).
\end{equation}
By combining with \eqref{eq-interior-cone-estimate1} and \eqref{eq-interior-cone-estimate2}, we obtain 
$$(1+C\|A-I\|)^{-1} u_2(x)\leq u_1(x^*)  \leq (1+C\|A-I\|) u_2(x).$$
This implies the desired result. \end{proof}

Next, we express the condition in Lemma \ref{slu-di-dn dunder-affine-tr-genral}
in terms of the inner unit normal vectors.

Let $V$ be an infinite cone  as introduced in Definition \ref{def-cone-lipschitz}
and $\nu_1, \cdots, \nu_k$ be inner unit normal vectors of $V$.
Consider the $n\times k$ matrix
\begin{equation}\label{eq-Definition-N}N=(\nu_1, \cdots,\nu_k).\end{equation}
If $k=n$, let $\mu_1, \cdots, \mu_n$ be the vectors introduced in \eqref{sec-Definition-mu}.
Then, for any $j=1, \cdots, n$,
$$\langle \nu_l, \mu_{j}\rangle=0\quad\text{for }l\neq j,$$
and
$$\langle \nu_j, \mu_{j}\rangle>0.$$
As a consequence, for any $j=1, \cdots, n$,
$$N^{T}\mu_{j} =\langle \nu_j, \mu_{j}\rangle e_j,$$
where $e_j$ is the unit vector along the $x_j$-axis. This implies, in particular, that
$N^T\mu_j$ and $e_j$, or $\mu_j$ and $N^{-T}e_j$,  are along the same direction.

We have the following result.

\begin{theorem}\label{slu-k n1-n2 under tr-general}
For a fixed $2\le k\le n$,
let $V_1, V_2\subseteq \mathbb R^{n}$ be two infinite cones satisfying the same relation
as in Definition \ref{def-cone-same-relation} in terms of  linearly independent hyperplanes
$P_{1,1}, \cdots, P_{1,k}$ and $P_{2,1}, \cdots, P_{2,k}$, respectively,
and $u_i$
be the unique nonnegative solution of \eqref{eq-MainEq}-\eqref{eq-MainBoundary} for
$\Omega=V_i$, with $f_{V_i}$ given by \eqref{eq-Solution-Cone-d-coordinates-nd2}, for $i=1, 2$.
For the positive constant $ \epsilon_{0}$
determined in Lemma \ref{slu-dunder-affine-tr},
if
$$3n \|(N_{1}^{T}N_1)^{-1}\|\|N_1-N_2\| \leq \epsilon_0,$$
then,
$$(1+C\|N_1-N_2\|)^{-1} f_{V_2}(d_1,..., d_k)\leq f_{V_1}(d_1, ..., d_k)  \leq
(1+C\|N_1-N_2\|)f_{V_2}(d_1, ..., d_k),$$
where $N_1$ and $N_2$ are the matrices defined as in \eqref{eq-Definition-N}
for $V_1$ and $V_2$, and  $C$ is a positive constant depending
only on $n$ and $\sigma(P_{i,1},...,P_{i,k})$ as defined in
\eqref{eq-LinearIndependence}.
\end{theorem}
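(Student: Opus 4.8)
The plan is to reduce Theorem \ref{slu-k n1-n2 under tr-general} to Lemma \ref{slu-di-dn dunder-affine-tr-genral} by exhibiting an explicit linear transform $T$ of the required form $T=O_1AO_2$ that carries $V_1$ onto $V_2$ while matching up corresponding faces, and by showing that $\|A-I\|$ is controlled by $\|N_1-N_2\|$ (times a constant depending only on $n$ and $\sigma$). Concretely, since $V_1$ and $V_2$ satisfy the same relation, both are built by the same union \eqref{eq-Conecomponet} over the same collection of sign vectors $(l_1,\dots,l_k)$, the only difference being the hyperplanes $P_{1,j}$ versus $P_{2,j}$, equivalently the inner normals $N_1=(\nu_{1,1},\dots,\nu_{1,k})$ versus $N_2=(\nu_{2,1},\dots,\nu_{2,k})$. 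I would first treat the model case $k=n$, where $N_1,N_2$ are invertible $n\times n$ matrices, and define $A$ so that it sends the edge vectors $\mu_{1,j}$ of $V_1$ to the edge vectors $\mu_{2,j}$ of $V_2$; since $A$ maps each $P_{1,j}$ to $P_{2,j}$ (it preserves the common edge structure) and maps each $H^{l}_{i}$-type halfspace correspondingly, it follows from \eqref{eq-Conecomponet} that $AV_1=V_2$ and $A(\partial V_1\cap P_{1,j})=\partial V_2\cap P_{2,j}$. Then $T=A$ is trivially of the form $O_1AO_2$ with $O_1=O_2=I$, so Lemma \ref{slu-di-dn dunder-affine-tr-genral} applies once $\|A-I\|\le\epsilon_0$.

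The arithmetic heart of the argument is the estimate $\|A-I\|\le C\|N_1-N_2\|$ with $C$ depending only on $n$ and $\sigma(P_{i,1},\dots,P_{i,k})$. For this I would use the relation established just before the theorem: writing $\mu_{m,j}$ in the direction of $N_m^{-T}e_j$, one has, after normalization, $A=N_2^{-T}D N_1^{T}$ for a suitable positive diagonal matrix $D$ accounting for the normalizations $\langle\nu_{m,j},\mu_{m,j}\rangle$ (indeed $N_m^T\mu_{m,j}=\langle\nu_{m,j},\mu_{m,j}\rangle e_j$, so $N_m^{-T}$ carries $e_j$ to a positive multiple of $\mu_{m,j}$). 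From $A=N_2^{-T}DN_1^T$ and $I=N_1^{-T}N_1^T$ one estimates
\begin{equation*}
\|A-I\|\le \|N_2^{-T}\|\,\|DN_1^T-N_2^T\|+\|(N_2^{-T}-N_1^{-T})N_1^T\|,
\end{equation*}
and each term is bounded by a constant multiple of $\|N_1-N_2\|$, using that $\|N_m^{-1}\|$ (equivalently $\|(N_m^TN_m)^{-1}\|^{1/2}$, which is $\sigma(P_{m,1},\dots,P_{m,k})^{1/2}$) is under control, that the diagonal entries of $D$ are close to $1$ when $N_1,N_2$ are close, and the standard perturbation bound $\|N_2^{-1}-N_1^{-1}\|\le\|N_1^{-1}\|\,\|N_2^{-1}\|\,\|N_1-N_2\|$. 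Tracking constants, one arrives at a bound of the shape $\|A-I\|\le 3n\|(N_1^TN_1)^{-1}\|\|N_1-N_2\|$, which is exactly the hypothesis forcing $\|A-I\|\le\epsilon_0$; then Lemma \ref{slu-di-dn dunder-affine-tr-genral} yields the conclusion with $\|A-I\|$ replaced by its upper bound $C\|N_1-N_2\|$ (absorbing the factor $3n\|(N_1^TN_1)^{-1}\|$ into $C$).

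For the general case $2\le k<n$, I would first apply rotations $O_1,O_2$ (which change neither the solutions up to isometry nor the relevant norms) to place each cone in the standard position $V_m=V_{m,k}\times\mathbb R^{n-k}$, so that the normals $\nu_{m,1},\dots,\nu_{m,k}$ all lie in $\mathbb R^k\times\{0\}$ and are linearly independent there. The matrix $A$ is then taken block-diagonal, equal to the identity on the $\mathbb R^{n-k}$ factor and equal on the $\mathbb R^k$ factor to the $k\times k$ transform constructed exactly as in the $k=n$ case (using the $k$-dimensional edge vectors of $V_{m,k}$). This $T=O_1AO_2$ satisfies $TV_1=V_2$ and matches faces, and $\|A-I\|$ is controlled by $\|N_1-N_2\|$ with constant depending on $n$ and $\sigma$ via the $k\times k$ Gram matrices $(N_m^TN_m)^{-1}$; invoking Lemma \ref{slu-di-dn dunder-affine-tr-genral} finishes the proof, since the functions $f_{V_m}$ depend only on the signed distances $d_1,\dots,d_k$, which are unaffected by the identity action on the $\mathbb R^{n-k}$ factor.

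The main obstacle I anticipate is bookkeeping rather than conceptual: one must verify carefully that the single linear map $A$ defined through the edge vectors genuinely sends each halfspace $H^{l_i}_i$ for $V_1$ to the corresponding one for $V_2$ (so that the union \eqref{eq-Conecomponet} is preserved face-by-face, as required to apply Lemma \ref{slu-di-dn dunder-affine-tr-genral}), and that the normalization diagonal matrix $D$ and the various matrix-norm perturbation estimates combine to give precisely the constant $3n\|(N_1^TN_1)^{-1}\|$ in the smallness hypothesis. The subtlety noted in Example \ref{example-4components}, that a face $\partial V\cap P_i$ may be disconnected, does not cause trouble here because $A$ is globally linear and respects each $P_i$ and each sign pattern simultaneously; but it does mean the verification should be phrased in terms of the algebraic description \eqref{eq-Conecomponet} rather than any picture of individual face components.
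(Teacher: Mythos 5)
Your argument for $k=n$ follows the paper's strategy closely and is essentially correct. Both you and the paper construct the linear map from edge vectors: the paper sets $A=(\mu_{2,1},\dots,\mu_{2,n})(\mu_{1,1},\dots,\mu_{1,n})^{-1}$ and uses the relation $\mu_{m,j}=(N_m^T)^{-1}\langle\nu_{m,j},\mu_{m,j}\rangle e_j$ to bound $|\mu_{1,j}-\mu_{2,j}|\le 3\|N_1^{-1}\|\|N_2-N_1\|$ directly, whence $\|A-I\|\le 3n\|N_1^{-1}\|^2\|N_2-N_1\|$. Your reformulation $A=N_2^{-T}DN_1^T$ is algebraically the same object; note, however, that the displayed split
$\|A-I\|\le\|N_2^{-T}\|\|DN_1^T-N_2^T\|+\|(N_2^{-T}-N_1^{-T})N_1^T\|$
is not a correct decomposition, since $A-I=N_2^{-T}DN_1^T-I$ and $I=N_2^{-T}N_2^T=N_1^{-T}N_1^T$, so the second term on the right is $\|I-I\|=0$; a correct split is, e.g., $A-I=N_2^{-T}(D-I)N_1^T+(N_2^{-T}-N_1^{-T})N_1^T$. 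This is easily repaired and the resulting bound has the same shape, so I treat it as a slip rather than a gap.

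For $2\le k<n$ your route genuinely differs from the paper's, and here there is a real gap. You rotate each cone separately by $R_1,R_2$ into the standard position $V_{m,k}\times\mathbb R^{n-k}$ and then apply the $k$-dimensional estimate to the block matrices $N_m'$. The Gram matrices $(N_m')^TN_m'=N_m^TN_m$ are indeed rotation-invariant, but the difference is not: the $k$-dimensional bound gives control in terms of $\|N_1'-N_2'\|=\|R_1N_1-R_2N_2\|$, which can differ from $\|N_1-N_2\|$ by as much as $\|R_1-R_2\|$. Your parenthetical assertion that rotations ``change neither $\dots$ the relevant norms'' is false for the quantity you actually need, because $R_1\ne R_2$ in general. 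To close this you would need an additional step showing that the two rotations can be chosen with $\|R_1-R_2\|\le C\|N_1-N_2\|$ (e.g., by first fixing $R_1$ and then constructing a small rotation $S$ with $SE_1=E_2$, $\|S-I\|\le C\|N_1-N_2\|$, taking $R_2=R_1S^{-1}$). The paper avoids this issue entirely: it never rotates, but instead extends both $N_1$ and $N_2$ by the \emph{same} orthonormal basis $e_{k+1},\dots,e_n$ of $E^\perp$ with $E=\operatorname{span}\{\nu_{1,1},\dots,\nu_{1,k}\}$, so that $\|\overline N_1-\overline N_2\|=\|N_1-N_2\|$ exactly and $\|(\overline N_1^T\overline N_1)^{-1}\|=\|(N_1^TN_1)^{-1}\|$, and then applies the $k=n$ estimate to the auxiliary convex cones $\overline V_1,\overline V_2$ bounded by the enlarged family of $n$ hyperplanes. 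Also note that the paper's $A$ is consequently only approximately block-diagonal (since $\mu_{2,j}$ for $j>k$ is close to but generally not equal to $e_j$), whereas your $A$ is exactly block-diagonal after the rotations; both designs are fine once the rotation comparison is supplied, but the paper's is tighter because it bypasses the comparison altogether.
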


\begin{proof}
We first consider $k=n$. Set
\begin{equation}\label{eq-Definition-A}A= (
\mu_{2,1},... ,   \mu_{2,n} )( \mu_{1,1},... ,   \mu_{1,n})^{-1}.\end{equation}
Then, the linear transform given by $T =A$ satisfies $T\mu_{1,j}=\mu_{2,j}$, for
$j=1, \cdots, n$. Since $V_1, V_2$ satisfy the same relation, then $TV_ 1= V_2 $
and $TF_ {1,j}= F_{2,j}$, for $j=1,\cdots, n$.
We now verify  $\|A-I\| \leq
\epsilon_{0}$.
First, we have \begin{equation}\label{e_l_i--e_i} \mu_{i,j}=(N_{i}^{T})^{-1}\langle
\nu_{i,j}, \mu_{i,j}\rangle e_j.\end{equation}
Then,
\begin{align*}&\,|(N_{1}^{T})^{-1}\langle \nu_{1,j}, \mu_{1,j}\rangle e_j
-(N_{2}^{T})^{-1}\langle \nu_{1,j}, \mu_{1,j}\rangle e_j| \\
\leq &\, \|(N_{2}^{T})^{-1}\|
\|N_2^{T} -N_1^{T} \| |(N_{1}^{T})^{-1}\langle \nu_{1,j}, \mu_{1,j}\rangle e_j| \\
\leq  &\, \|N_2^{-1}\|  \|N_2^{} -N_1 \| \\
\leq &\, \frac{\|N_1^{-1}\|}{1-\|N_1^{-1}\| \|N_2^{} -N_1 \|} \|N_2 -N_1 \|.
\end{align*}
Therefore, if $\|N_1^{-1}\| \|N_2^{} -N_1 \| \leq {1}/{10} $, we have
\begin{align*}
| \mu_{1,j} -  \mu_{2,j}| &\leq |\mu_{1,j}-(N_{2}^{T})^{-1}\langle \nu_{1,j},
\mu_{1,j}\rangle e_j|  + |(N_{2}^{T})^{-1}\langle \nu_{1,j}, \mu_{1,j}\rangle e_j
-\mu_{2,j}|\\
&\leq2 |\mu_{1,j}-(N_{2}^{T})^{-1}\langle \nu_{1,j},
\mu_{1,j}\rangle e_j|\\
& \leq 3 \|N_1^{-1}\| \|N_2^{} -N_1 \|,
\end{align*}
where, for the estimate of $(N_{2}^{T})^{-1}\langle \nu_{1,j}, \mu_{1,j}\rangle e_j
-\mu_{2,j}$,
we use the facts that $(N_2^T)^{-1}e_j$ and $\mu_{2,j}$
are on the same ray and $|\mu_{2,j}|=|\mu_{1,j}|$. Hence,
\begin{align*}
\|A-I\| &\leq  \|\left( \mu_{2,1}-\mu_{1,1},\cdots ,  \mu_{2,n} -\mu_{1,n}\right)\|   \|(
\mu_{1,1}, \cdots,   \mu_{1,n})^{-1} \|\\
& \leq     3\sqrt{n} \|N_1^{-1}\| \|N_2^{} -N_1 \| \|( \mu_{1,1},... ,   \mu_{1,n})^{-1}\|.
\end{align*}
By \eqref{eq-Estimate-E}, we obtain
\begin{align*}
\|( \mu_{1,1},... ,   \mu_{1,n})^{-1} \|
\leq \sqrt{n}\| N_1^{-1}\|,
\end{align*}
and then
$$\|A-I\|  \leq 3n  \|N_1^{-1}\|^{2} \|N_2^{} -N_1 \|.$$
Hence,
$\|A-I\| \leq \epsilon_{0}$. Then, we can apply Lemma \ref{slu-di-dn dunder-affine-tr-genral}.

Next, we consider $2\le k<n$. Set $E=$ span$\{ \nu_{1,1}, \nu_{1,2},\cdots,\nu_{1,k}\}$
and let $e_{k+1}, \cdots, e_n$ be the orthonormal basis of the orthogonal complement of $E$.
Consider the matrices
$$\overline{N}_{1}=(\nu_{1,1},\cdots, \nu_{1,k},e_{k+1},...,e_{n}), \quad
\overline{N}_{2} =(\nu_{2,1},\cdots, \nu_{2,k},e_{n+1},...,e_{n}).$$
Then, we have
$$\|\overline{N}_{1}^{-1}\|^2=\|(\overline{N}_{1}^{T}\overline{N}_{1})^{-1}\|=\|(N_1^{T}N_1)^{-1}\|,$$
and
$$\|N_1-N_2  \| =\| \overline{N}_1-\overline{N}_2   \|.$$
Let $P_j$ be the hyperplane passing the origin with its unit normal vector $e_j$, $j= k+1,..., n$.
For a unification, we write $P_{i,j}=P_j$, for $i=1, 2$ and $j=k+1, \cdots, n$.
Let $\overline{V}_i$ be the {\it convex} infinite cone enclosed by the $n$ hyperplanes
$P_{i,1},...,P_{i,k}$, $P_{i,k+1},...,P_{i,n}$ and 
$\widehat{V}_i$ be the {\it convex} infinite cone enclosed by the $k$ hyperplanes
$P_{i,1},...,P_{i,k}$. 
Denote by $F_{i,1}, \cdots, F_{i,n}$ the faces of $\overline{V}_{i}$ and by $\mu_{i,j}$ the unit vector along
$\bigcap_{l\neq j} F_{i,l}$. Then, for $i=1,2$,
$$\overline{V}_i =\{t_{i,1}  \mu_{i,1} +\cdots+t_{i,n} \mu_{i,n}:\, t_{i,j}>0
\text{ for }j=1,...,n\}.$$ 
Define $A$ by \eqref{eq-Definition-A}. Hence,
the affine transform given by $T =A$ satisfies $T\mu_{1,j}=\mu_{2,j}$, for
$j=1, \cdots, n$, and therefore
$TV_ 1= V_2 $. 
By Lemma \ref{slu-di-dn dunder-affine-tr-genral}, we have the desired estimate.
\end{proof}

\section{Solutions near Singular Points}
\label{sec-Singular-1}

In this section, we study the
asymptotic
expansions near singular points intersected by $C^{1,1}$-hypersurfaces in
$\mathbb{R}^{n}$ and derive an optimal estimate. The discussion relies essentially
on the conformal invariance of the equation \eqref{eq-MainEq}.

We first prove some results concerning the intersection of spheres.

\begin{lemma}\label{two ball inters}
Let $\partial B_{R_i}(o_i)$ be two circles in $\mathbb R^2$, $i=1,2,$
intersecting  at two points $p$ and  $q$. Suppose the angle between $po_1$ and $po_2$
is $\alpha$ for some $\alpha \in (0,\pi)$. Then,
$$|pq|=\frac{2R_1R_2\sin\alpha}{\sqrt{R_{1}^{2}+R_{2}^{2}-2R_1R_2\cos\alpha}}.$$
 \end{lemma}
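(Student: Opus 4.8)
The plan is to compute $|pq|$ by elementary plane geometry, exploiting the fact that the common chord $pq$ of the two circles is perpendicular to the line $o_1o_2$ joining the centers, and that $p$, $q$ are symmetric with respect to $o_1o_2$. So if $m$ denotes the foot of the perpendicular from $p$ to the line $o_1o_2$, then $|pq| = 2|pm|$, and it suffices to find $|pm|$, the distance from the intersection point to the center line.

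First I would set up the triangle $po_1o_2$, whose side lengths are $|po_1| = R_1$, $|po_2| = R_2$, and $|o_1o_2| = L$, where the angle at the vertex $p$ equals $\alpha$ (this is exactly the stated angle between $po_1$ and $po_2$). By the law of cosines applied to this triangle,
\begin{equation*}
L^2 = R_1^2 + R_2^2 - 2R_1R_2\cos\alpha.
\end{equation*}
Next, the distance $|pm|$ from $p$ to the opposite side $o_1o_2$ is the height of triangle $po_1o_2$ on that side, so it equals $2\,\mathrm{Area}(po_1o_2)/L$. Using the formula $\mathrm{Area} = \tfrac12 R_1 R_2 \sin\alpha$, we get
\begin{equation*}
|pm| = \frac{R_1R_2\sin\alpha}{L} = \frac{R_1R_2\sin\alpha}{\sqrt{R_1^2+R_2^2-2R_1R_2\cos\alpha}}.
\end{equation*}
Then $|pq| = 2|pm|$ gives the claimed identity.

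The one point that needs a word of justification — and the only place any subtlety hides — is why $|pq| = 2|pm|$, i.e.\ why the chord through the two intersection points is bisected perpendicularly by the center line. This is standard: both $p$ and $q$ lie on $\partial B_{R_1}(o_1)$ and on $\partial B_{R_2}(o_2)$, so they are equidistant (distance $R_1$) from $o_1$ and equidistant (distance $R_2$) from $o_2$; hence both lie on the perpendicular bisector characterization forcing the segment $pq$ to be perpendicular to $o_1o_2$ with its midpoint on the line $o_1o_2$. Consequently the foot of the perpendicular from $p$ to $o_1o_2$ is the midpoint $m$ of $pq$, which is what we used. I do not anticipate any real obstacle here; the result is a routine exercise in triangle geometry, and the main care is simply in identifying that the angle $\alpha$ at $p$ in triangle $po_1o_2$ is precisely the angle named in the hypothesis, and in noting the degenerate edge cases $\alpha \to 0$ or $\alpha \to \pi$ are excluded by the assumption $\alpha \in (0,\pi)$ together with the hypothesis that the circles actually meet at two distinct points.
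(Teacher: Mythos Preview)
Your proof is correct and follows essentially the same approach as the paper: both rely on $o_1o_2\perp pq$ and the law of cosines to compute $|o_1o_2|=\sqrt{R_1^2+R_2^2-2R_1R_2\cos\alpha}$. Your version simply supplies more detail (the area--height argument and the justification of the perpendicular bisector property) than the paper's one-line sketch.
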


\begin{proof}
This follows by a  straightforward calculation. We simply note $o_1o_2\perp pq$ and 
$|o_1o_2|=\sqrt{R_{1}^{2}+R_{2}^{2}-2R_1R_2\cos\alpha}$. 
\end{proof}

\begin{lemma}\label{n ball inters}
Let $\partial B_{1}(o_i)$ be $n$ unit spheres in $\mathbb{R}^{n}$, $i= 1,...,n,$
intersecting at a point $p$,
and the inner unit normal vector $\nu_i$ of $\partial B_{1}(o_i)$ at $p$
be linearly independent. Then, $\partial B_{1}(o_i)$ intersect
at another point $q$, and
$$|pq| > \frac{|\det N|}{2^{n-2}},$$
where $N$ is the matrix $(\nu_1, \nu_2,\cdots,\nu_n)$.
\end{lemma}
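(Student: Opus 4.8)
The plan is to pin down the second intersection point $q$ by a short computation and thereby turn the statement into an inequality about the matrix $N$. Translate so that $p=0$. Since $\partial B_1(o_i)$ passes through $p$, we have $|o_i|=1$, and the inner unit normal of $\partial B_1(o_i)$ at $p$ is $\nu_i=o_i$. A point $x$ lies on all $n$ spheres exactly when $|x-\nu_i|^2=1$ for every $i$, i.e. $|x|^2=2\langle x,\nu_i\rangle$ for every $i$; in particular all the numbers $\langle x,\nu_i\rangle$ equal the common value $t:=|x|^2/2$. Regarding $N=(\nu_1,\cdots,\nu_n)$ as a matrix with columns $\nu_i$, this reads $N^{T}x=t\mathbf 1$ with $\mathbf 1=(1,\cdots,1)^{T}$, so by invertibility of $N$ we get $x=t\,w$, where $w:=(N^{T})^{-1}\mathbf 1\neq 0$. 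Substituting into $t=|x|^2/2=t^{2}|w|^2/2$ forces $t=0$ (giving $x=p$) or $t=2/|w|^2$; hence the spheres meet at the single further point $q=\frac{2}{|w|^2}\,w$, and $|pq|=|q|=2/|w|$. Thus the claimed bound $|pq|>|\det N|/2^{n-2}$ is equivalent to $|w|\,|\det N|<2^{n-1}$.

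To estimate $|w|\,|\det N|$, I would use Cramer's rule: $w_j\det N=\det M_j$, where $M_j$ is $N^{T}$ with its $j$-th column replaced by $\mathbf 1$. Let $c_1,\cdots,c_n$ be the columns of $N^{T}$ and put $a_l=\|c_l\|^2$. Hadamard's inequality gives $|\det M_j|\le \|\mathbf 1\|\prod_{l\neq j}\|c_l\|=\sqrt n\,\prod_{l\neq j}a_l^{1/2}$. Squaring, summing over $j$, and using $(\det N)^2|w|^2=\sum_j(w_j\det N)^2$, we obtain $(\det N)^2|w|^2\le n\sum_j\prod_{l\neq j}a_l = n\,e_{n-1}(a_1,\cdots,a_n)$. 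The key elementary point is that the columns of $N^{T}$ consist of the coordinates of the unit vectors $\nu_i$, so $\sum_l a_l=\sum_l\sum_i\nu_{i,l}^2=\sum_i\|\nu_i\|^2=n$; hence, by Maclaurin's inequality for $n$ nonnegative reals of sum $n$, $e_{n-1}(a_1,\cdots,a_n)\le e_{n-1}(1,\cdots,1)=n$. Combining, $(\det N)^2|w|^2\le n^2$, i.e. $|w|\,|\det N|\le n$.

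Since $n\le 2^{n-1}$ for every $n\ge 2$, with strict inequality for $n\ge 3$, this gives $|w|\,|\det N|<2^{n-1}$ when $n\ge 3$. For $n=2$ one needs the strict inequality $|w|\,|\det N|<n$; this follows from the equality analysis — equality in Maclaurin forces all $a_l=1$, and equality in every Hadamard step forces $\mathbf 1\perp c_l$ for all $l$, impossible since $c_1,\cdots,c_n$ span $\mathbb R^n$ — or, more concretely, from Lemma \ref{two ball inters}, which for unit circles gives $|pq|=2\cos(\alpha/2)>\sin\alpha=|\det N|$ exactly because $\alpha\in(0,\pi)$. The main obstacle I expect is the second paragraph: one must obtain a bound on $|w|\,|\det N|$ that is \emph{linear} in $n$, since cruder estimates (for instance an operator-norm bound yielding only $|w|\,|\det N|\le n^{n/2}$) fail to beat $2^{n-1}$ once $n\ge3$; the extra leverage comes precisely from the identity $\sum_l\|c_l\|^2=\sum_i\|\nu_i\|^2=n$ fed into Maclaurin's inequality.
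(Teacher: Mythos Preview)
Your proof is correct and takes a genuinely different route from the paper's. The paper proceeds by induction on $n$: it arranges coordinates so that the first $k$ normals lie in $\{x_{k+1}=0\}$, applies the inductive hypothesis to find the second intersection $q'$ of the first $k$ spheres within that hyperplane, identifies $\bigcap_{i\le k}\partial B_1(o_i)$ as a circle through $p$ and $q'$, and then uses the two-circle formula of Lemma~\ref{two ball inters} to locate $q$ on that circle and on the $(k{+}1)$-th sphere, bounding $|pq|$ step by step. You instead solve for $q$ in closed form: with $p=0$ the system $|x-\nu_i|^2=1$ collapses to $N^{T}x=(|x|^2/2)\mathbf 1$, giving $q=(2/|w|^2)w$ with $w=(N^{T})^{-1}\mathbf 1$, so that the lemma becomes the pure matrix inequality $|w|\,|\det N|<2^{n-1}$, which you settle via Cramer's rule, Hadamard's inequality, and Maclaurin's inequality together with the constraint $\sum_l\|c_l\|^2=\sum_i\|\nu_i\|^2=n$. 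Your approach is shorter, gives the exact value $|pq|=2/|w|$, and in fact proves the sharper bound $|w|\,|\det N|\le n$ (hence $|pq|\ge 2|\det N|/n$, strict by your equality analysis), which dominates the stated bound for every $n\ge 2$; the paper's inductive argument is more geometric and makes the two-dimensional lemma visibly drive each step, but yours is both cleaner and stronger.
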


\begin{proof}
We  prove by induction.

For $n=2$, by Lemma \ref{two ball inters}, $\partial B_{1}(o_i)$ intersect
at two points $p$ and $q$, and
$$|pq|=\frac{2\sin\alpha}{\sqrt{2-2\cos\alpha}}>  \sin \alpha =|\det
N|,$$
where $\alpha$ is the angle between $\nu_1$ and $\nu_2$.
Hence, the desired result holds for $n=2$. In fact, these two circles intersect at exactly two points. 

Now suppose the result holds for $n=k$, for some $k \geq 2$,
and we consider $k+1$. Without loss of generality, we
assume $p=0$ and
$$\aligned\nu_i &=(\nu^{i}_1,\cdots,\nu^{i}_k,0)^{T}\quad\text{for }i=1,...,k,\\
\nu_{k+1}&=(\nu^{k+1}_{1},\cdots,\nu^{k+1}_{k},\nu^{k+1}_{k+1} )^{T}\quad\text{with }\nu^{k+1}_{k+1}>0.
\endaligned$$
Under this setting, the $(k+1)$-th coordinate of $o_i$ is zero,
for $i=1, \cdots, k$.
Write  $\nu_i'=(\nu^{i}_1,\cdots,\nu^{i}_k)^{T}$ and $N_k=(\nu_1', \cdots,\nu_k')$, a
$k\times k$ matrix.  Then,
$$|\det N|=|\det N_{k}|\nu^{k+1}_{k+1}.$$
Write  $B_i'=B_{1}(o_i)\cap\{x_{k+1}=0\}$ and regard it as a ball in $\mathbb R^k=\mathbb R^k\times \{0\}$.
By induction, we have
$$\bigcap_{i=1}^k\partial B_i'=\{p,q'\},$$ and
$$|pq'| > \frac{|\det N_k|}{2^{k-2}}.$$
Without loss of generality, we assume $q'=(2a, 0,...,0)$, with $a>|\det
N_k|/{2^{k-1}}$. It is straightforward to vefify 
$$\bigcap_{i=1}^k\partial B_{1}(o_i)
=\{(x_1,0_{\mathbb R^{k-1}},x_{k+1}):\, (x_1-a)^{2}+x_{k+1}^{2}=a^{2}\}:=C_1.$$
Now we have
\begin{align*}
&\, \partial B_{1}(o_{k+1}) \bigcap   \{(x_1,0_{\mathbb R^{k-1}},x_{k+1}):\,x_1 \in \mathbb{R}, x_{k+1}
\in \mathbb{R}\}\\ =   &\,
\{(x_1,0_{\mathbb R^{k-1}},x_{k+1}):\,(x_1-\nu^{k+1}_{1})^{2}+(x_{k+1}-\nu^{k+1}_{k+1})^{2}=
(\nu_{1}^{k+1 })^2+ (\nu_{k+1}^{k+1 })^2\} := C_2.
\end{align*}
Note $0 \in C_1\bigcap C_2 $.
By Lemma \ref{two ball inters}, there exists a point $q \in C_1\bigcap C_2 $ such that
$$|pq|=\frac{2aR\sin\alpha}{\sqrt{a^{2}+R^{2}-2aR\cos\alpha}},$$
where $R=\sqrt{(\nu_{1}^{k+1 })^2+ (\nu_{k+1}^{k+1 })^2}$, $\cos\alpha={\nu_{1}^{k+1 }}/{R}$
and $\sin\alpha={\nu_{k+1}^{k+1 }}/{R} $.
Therefore, with $a\le 1$ and $R\le 1$, 
$$|pq|>a\nu^{k+1}_{k+1} > \frac{|\det N_{k}|\nu^{k+1}_{k+1} }{2^{k-1}} =
\frac{|\det N|}{2^{k-1}}.$$
This implies the desired result for $n=k+1$.
\end{proof}

In fact, the proof above demonstrates that these spheres intersect at exactly two points.

In general for some $k\le n$,
let $\partial B_{1}(o_i)$ be $k$ unit spheres in $\mathbb{R}^{n}$, $i= 1,\cdots,k,$
intersecting at a point $p$,
such that the inner unit normal vectors  $\nu_i$ of $\partial B_{1}(o_i)$ at $p$
are linearly independent.
Then, $\partial B_{1}(o_i)$ intersects
at an $(n-k)$-dimensional sphere, with its radius
$$r > \frac{\sqrt{\det ( N^{T}N)}}{2^{k-2}},$$
where $N$ is the matrix $(\nu_1, \cdots, \nu_k)$. Here, the 0-dimensional sphere consists of
two points.
In particular,
$\partial B_{1}(o_i)$ intersect
each other at at least two points
$p$ and $q$, and
$$|pq| > \frac{\sqrt{\det ( N^{T}N)}}{2^{k-2}}.$$
Without loss of generality, we assume $p=(a,0,\cdots,0)$, and $q=(-a,0,\cdots,0)$,
where
$$a=\frac{|pq|}{2} > \frac{\sqrt{\det ( N^{T}N)}}{2^{k-1}}.$$

Next, we set, for each $i=1, \cdots, k$, 
\begin{align*}\widetilde{B}_{i}^{1}&=\{x\in\mathbb R^n:\, x\in B_{1}^{n}(o_i)\},\\
\widetilde{B}_{i}^{-1}&=\{x\in\mathbb R^n:\, x\notin B_{1}^{n}(o_i)\},\end{align*}
and 
\begin{equation}\label{eq-definition-tildeB}
\widetilde{B}_{(l_1,...,l_k)}=\bigcap_{i=1}^{k}\widetilde{B}_{i}^{l_{i}},\end{equation}
where $l_i=1$ or $-1$, for each $i=1,\cdots,k$. 
Then, we take
\begin{equation}\label{eq-interiorballcomponet}\widetilde{B}= \bigcup \widetilde{B}_{(l_1,\cdots,l_k)} \end{equation}
where the union is over a fixed finite set of vectors 
$(l_1,\cdots,l_k)$,  with $l_i=1$ or $-1$ for each $i=1,\cdots,k$. 
We require that 
$\widetilde{B}$ is a Lipschitz domain. 








Similarly, we set, for each $i=1, \cdots, k$, 
\begin{align*}\widehat{B}_{i}^{1}&=\{x\in\mathbb R^n:\, x\notin \overline{B_{1}^{n}(o_i)}\},\\
\widehat{B}_{i}^{-1}&=\{x\in\mathbb R^n:\,x\in \overline{B_{1}^{n}(o_i)}\},\end{align*}
and 
\begin{equation}\label{eq-definition-hatB}\widehat{B}_{(l_1,\cdots,l_k)}=\bigcap_{i=1}^{k}\widehat{B}_{i}^{l_{i}},
\end{equation}
where $l_i=1$ or $-1$, for each $i=1,\cdots,k$. Then, we take
\begin{equation}\label{eq-exteriorballcomponet}\widehat{B}= \bigcup \widehat{B}_{(l_1,\cdots,l_k)}, \end{equation}
where the union is over the same set of vectors  $(l_1,\cdots,l_k)$ as in \eqref{eq-interiorballcomponet}.

\smallskip

In the following, we transform  $\widetilde{B}$ and $\widehat{B}$ conformally to infinite cones. 
To this end, we embed $\mathbb{R}^{n}$ into $\mathbb{R}^{n+1}$ as $\mathbb{R}^{n}\times \{0\}$. Set
$$S^n(a)=\{(x_1,\cdots, x_n,x_{n+1}):\,x_{1}^{2}+\cdots+x_{n+1}^{2}=a^{2}\}.$$
Consider the following three conformal transforms in $\mathbb R^{n+1}$. We always write
$x=(x_1, \cdots, x_n)\in\mathbb R^n$.
Set
$$T_1(x,0)=\left(\frac{2a^2 x_1}{a^2+|x|^2},\cdots,\frac{2a^2 x_n}{a^2+|x|^2},\frac{a(a^2-|x|^2)}{a^2+|x|^2}\right).$$
Then, $T_1$ is the inverse transform of the stereographic projection
which lifts $\mathbb{R}^{n}\times \{0\}$ to $S^{n}(a)$. Next, set
$$T_2(x_1,...,x_n,x_{n+1})=(-x_{n+1},x_2,\cdots,x_n,x_1).$$
Then, $T_2$ is an orthogonal transform which
rotates the $x_1x_{n+1}$-plane by ${\pi}/{2}$ is counterclockwisely.
Last, set
$$T_3(x,x_{n+1})=\left(\frac{ax_1}{a+x_{n+1}},\cdots,\frac{ax_n}{a+x_{n+1}},0\right).$$
Then, $T_3$ is the stereographic projection which
transforms $S^{n}(a)\backslash(0_{\mathbb R^n},-a)$ onto $\mathbb{R}^{n}\times \{0\}$.
Next, we view $T_a=(T_3T_2T_1)|_{\mathbb R^n\times \{0\}}$ as a map in $\mathbb R^n$,
which is given by
\begin{equation}\label{eq-ConformalTransform}T_a x=\left(\frac{-a(a^2-|x|^2)}{a^2+2ax_1+|x|^2},
\frac{2a^2 x_2}{a^2+2ax_1+|x|^2},\cdots,
\frac{2a^2 x_n}{a^2+2ax_1+|x|^2}\right).\end{equation}
Let $V_{p}$ be the tangent cone of
$\widetilde{B}$
at $p$. Therefore,  $T_a$ transforms $\widetilde{B}$ and $\widehat{B}$ conformally to infinite cones
$\widetilde{V}$ and $\widehat{V}$ with vertices $0$, which are conjugate to
$V_{p}$. By a straightforward calculation, the Jacobi matrix $(\frac{\partial T_a}{\partial x})$ has the form 
\begin{equation}\label{eq-Jacobi}\left(\frac{\partial T_a}{\partial x}(x)\right)=\frac{2a^2}{a^2+2ax_1+|x|^2}O(x),
\end{equation}
where $O(x)$ is an orthogonal matrix. 

Let $\widetilde{v}$ be the solution of \eqref{eq-MainEq}-\eqref{eq-MainBoundary} for $\Omega = \widetilde{V}$.
Then, $\widetilde{v}^{\frac{4}{n-2}}(y_1,\cdots,y_n)\sum_{i=1}^{n}dy_i\otimes dy_i$ is
a complete metric with the constant scalar curvature $-n(n-1)$ on $\widetilde{V}$.  Hence,
$$\widetilde{u}^{\frac{4}{n-2}}(x)\sum_{i=1}^{n}dx_i\otimes dx_i =
T_a^{*}(\widetilde{v}^{\frac{4}{n-2}}\sum_{i=1}^{n}dy_i\otimes dy_i)$$
is a complete metric with the constant scalar curvature $-n(n-1)$ on $\widetilde{B}$. A straightforward 
calculation, with the help of \eqref{eq-Jacobi}, yields 
\begin{align}\label{eq-tilde-u}\widetilde{u}(x)
=\widetilde{v}\left(T_ax\right)
\left[\frac{2a^2}{a^2+2ax_1+|x|^2}\right]^{\frac{n-2}{2}}.\end{align}
Then, $\widetilde u$ is a solution of \eqref{eq-MainEq}-\eqref{eq-MainBoundary} for $\Omega = \widetilde{B}$.
In fact, by the scaling property of $\widetilde v$ and the explicit expression of $T_a$ in \eqref{eq-ConformalTransform}, 
we have 
\begin{align*}
\widetilde u(x)=\widetilde{v}(-a(a^2-|x|^2),2a^2 x_2,\cdots,2a^2 x_n)(2a^2)^{\frac{n-2}{2}}. 
\end{align*}
This expression will not be needed.

Similarly, let $\widehat{v}$ be the solution of \eqref{eq-MainEq}-\eqref{eq-MainBoundary}
for $\Omega = \widehat{V}$. Then,
\begin{align*}\widehat{u}(x)
=\widehat{v}\left(T_ax\right)
\left[\frac{2a^2}{a^2+2ax_1+|x|^2}\right]^{\frac{n-2}{2}},
\end{align*}
and $\widetilde{v}$ is a solution of \eqref{eq-MainEq}-\eqref{eq-MainBoundary} for $\Omega = \widehat{B}$.

Now we are ready to prove the main theorem of this section.

\begin{theorem}\label{opytimal estimate nd-nface}
Let $\Omega\subset \mathbb{R}^{n}$ be a bounded Lipschitz domain with
$x_0\in\partial\Omega$ and, for some integer $k\le n$,  $\partial\Omega$ in a neighborhood of
$x_0$ consists of
$k$ $C^{1,1}$-hypersurfaces $S_1, \cdots, S_k$ intersecting at $x_0$ with the property that the normal vectors
of $S_1, \cdots, S_k$ at $x_0$ are linearly independent. 
Suppose $ u \in
C^{\infty}(\Omega)$ is a solution of \eqref{eq-MainEq}-\eqref{eq-MainBoundary}
and $u_{V_{x_0}}$ is the corresponding solution in the tangent cone $V_{x_0}$.
Then,
for any $x$ close to $x_0$,
\begin{align}
\label{main-result} |u(x)-f_{V_{x_0}}(d_1,\cdots,d_k)|\leq Cu(x)|x-x_0|,
\end{align}
where  $d_i$ is the signed distance to $S_i$ with respect unit inner normal vectors,
$f_{V_{x_0}}$ is the function $u_{V_{x_0}}$ in
terms of $d_1, d_2,\cdots,d_k$ as in \eqref{eq-Solution-Cone-d-coordinates-nd2}, $C$ is a
positive constant depending only on $n$, $\sigma(P_i, \cdots, P_k)$ as defined in
\eqref{eq-LinearIndependence}
and the
$C^{1,1}$-norms of $S_1, \cdots, S_k$ near $x_0$.
\end{theorem}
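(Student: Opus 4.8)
The plan is to transplant to this singular setting the interior/exterior tangent ball comparison behind Theorem \ref{thrm-C-1,alpha-expansion}, carried out in the two steps indicated in the introduction. Fix $x\in\Omega$ close to $x_0$ and, for each $i$, let $p_i$ be the foot point of $x$ on $S_i$, so $x=p_i+d_i\nu(p_i)$ with $d_i$ the signed distance to $S_i$. Using $C^{1,1}$-regularity I place the interior and exterior tangent balls of a fixed radius $\rho$ (controlled by the $C^{1,1}$-norms of the $S_i$) at the $p_i$ and, after rescaling them to unit radius, assemble $\widetilde B$ and $\widehat B$ from $\widetilde B_i^{\pm1}$ and $\widehat B_i^{\pm1}$ exactly as in \eqref{eq-interiorballcomponet}--\eqref{eq-exteriorballcomponet}, taking the union over the same index set of vectors $(l_1,\dots,l_k)$ that appears for $\Omega$ in \eqref{eq-Domaincomponet}. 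First I check that, with these centers, $\widetilde B\subseteq\Omega$ and $\Omega\cap B_r(x_0)\subseteq\widehat B$ near $x_0$; this follows from the monotonicity in Lemma \ref{lemma-domain-relation}(ii) and the position of the tangent balls, once $|x-x_0|$ is small relative to $\rho$. Along the way I record that the common intersection point $p$ of the $k$ bounding spheres satisfies $|p-x_0|\le C|x-x_0|^2$, that Lemma \ref{n ball inters} bounds $a=|pq|/2$ below in terms of $\sigma(P_1,\dots,P_k)$ (so the conformal construction preceding the theorem applies), and that, because $x$, $p_i$ and the ball centers are colinear, the signed distance from $x$ to the $i$-th bounding sphere equals $d_i$ whenever $|d_i|<\rho$.

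Let $\widetilde u$, $\widehat u$ be the Loewner--Nirenberg solutions on $\widetilde B$, $\widehat B$. By the comparison principle --- with the cutoff on $\partial B_r(x_0)$ absorbed by the auxiliary solutions $u_{r,\cdot}$ of \eqref{eq-SolutionInside} and a shift, exactly as in the uniqueness argument of Section \ref{sec-Existence} --- I obtain $\widehat u\le u\le\widetilde u$ near $x$. Since Lemma \ref{lemma-ExteriorCone} makes $u(x)$, $\widetilde u(x)$, $\widehat u(x)$ and $f_{V_{x_0}}(d_1,\dots,d_k)$ pairwise comparable, it suffices to show that $\widetilde u(x)$ and $\widehat u(x)$ each agree with $f_{V_{x_0}}(d_1,\dots,d_k)$ up to a factor $1+C|x-x_0|$.

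Here I pass to cones. Writing $\lambda(x)=\tfrac{2a^2}{a^2+2ax_1+|x|^2}$ for the conformal factor in \eqref{eq-tilde-u}, the map $T_a$ of \eqref{eq-ConformalTransform} carries $\widetilde B$ onto the infinite cone $\widetilde V$ and, by \eqref{eq-Jacobi}, $\widetilde V$ is an orthogonal image $O(p)V_p$ (hence a conjugate) of the tangent cone $V_p$ of $\widetilde B$ at $p$; moreover $\widetilde u(x)=\lambda(x)^{(n-2)/2}\widetilde v(T_ax)$ with $\widetilde v$ the solution on $\widetilde V$. Now three comparisons finish the estimate for $\widetilde u$. (i) Rotation invariance of \eqref{eq-MainEq} gives $\widetilde v(T_ax)=u_{V_p}(O(p)^{-1}T_ax)$, with $u_{V_p}$ the solution on $V_p$. (ii) The signed distances of $O(p)^{-1}T_ax$ to the faces of $V_p$ equal those of $T_ax$ to the faces of $\widetilde V$; since $T_a$ is conformal with $\lambda$ nearly constant on the small region around $p$ ($\lambda(p)=\tfrac12$ and $\lambda(x)/\lambda(p)=1+O(|x-x_0|)$) and maps the $k$ bounding spheres onto those faces, each such distance is $\tfrac12 d_i$ up to a factor $1+O(|x-x_0|)$; letting $y^*\in V_p$ be the point with signed distances exactly $\tfrac12 d_1,\dots,\tfrac12 d_k$, the anisotropic difference estimate of Lemma \ref{lemma-AnisotropicEstimate} (condition \eqref{eq-AnisotripicCondition} holds with $\tau=C|x-x_0|$) together with the homogeneity \eqref{eq-scaling} of $u_{V_p}$ yields $u_{V_p}(O(p)^{-1}T_ax)=\lambda(p)^{-(n-2)/2}f_{V_p}(d_1,\dots,d_k)(1+O(|x-x_0|))$; the factors $\lambda(x)^{(n-2)/2}$ and $\lambda(p)^{-(n-2)/2}$ then cancel up to $1+O(|x-x_0|)$, giving $\widetilde u(x)=f_{V_p}(d_1,\dots,d_k)(1+O(|x-x_0|))$. (iii) $V_p$ and $V_{x_0}$ are bounded by $k$ hyperplanes satisfying the same relation, with inner normals differing by $O(|x-x_0|)$ (by $C^{1,1}$-regularity of the $S_i$ and $|p-x_0|\le C|x-x_0|^2$), and $\|(N_{x_0}^TN_{x_0})^{-1}\|=\sigma(P_1,\dots,P_k)$; hence Theorem \ref{slu-k n1-n2 under tr-general} gives $f_{V_p}(d_1,\dots,d_k)=f_{V_{x_0}}(d_1,\dots,d_k)(1+O(|x-x_0|))$. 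The same chain for $\widehat B$, $\widehat V$, $\widehat v$ gives the matching estimate for $\widehat u(x)$, and squeezing $\widehat u(x)\le u(x)\le\widetilde u(x)$ between the two yields \eqref{main-result}. For $k<n$ one argues identically after writing $V_{x_0}=V_k\times\mathbb R^{n-k}$ and using that the cone solutions depend only on $(x_1,\dots,x_k)$.

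The crux is step (ii) and its bookkeeping: one must show the conformal map $T_a$ distorts the $k$ distances to the bounding spheres only by a factor $1+O(|x-x_0|)$, uniformly in $i$, so that this distortion is absorbable by the anisotropic estimate \eqref{eq-AnisotropicDifference} and by the cone-comparison Theorem \ref{slu-k n1-n2 under tr-general}. This is exactly where the conformal invariance of \eqref{eq-MainEq} is indispensable: without it there is no reduction to solutions in cones, and only a weaker exponent survives. By contrast, verifying $\widetilde B\subseteq\Omega\subseteq\widehat B$ and the maximum-principle step are routine, requiring only that $|x-x_0|$ be small compared with the fixed radius $\rho$.
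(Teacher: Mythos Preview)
Your proposal is correct and follows essentially the same route as the paper: the same construction of $\widetilde B$ and $\widehat B$ from interior/exterior tangent balls at the foot points $p_i$, the same conformal map $T_a$ sending these to infinite cones, the same use of the anisotropic estimate (Lemma \ref{lemma-AnisotropicEstimate}) to control the distortion of the signed distances under $T_a$, and the same final comparison of cones via Theorem \ref{slu-k n1-n2 under tr-general}. The only minor deviations are cosmetic: the paper applies the maximum principle directly (asserting $\widetilde B\subseteq\Omega\subseteq\widehat B$ globally once $r$ and $|x-x_0|$ are small) rather than invoking the auxiliary $u_{r,\cdot}$ cutoff, and it records only the cruder bound $|\widetilde p-x|\le C|x-x_0|$ on the sphere intersection point, which already suffices since the normal comparison $\|\widetilde N-N\|\le C|x-x_0|$ follows from $|p_i-x_0|\le C|x-x_0|$ and the $C^{1,1}$ bound without needing your sharper $|p-x_0|\le C|x-x_0|^2$.
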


\begin{proof} Let $\nu_1, \cdots, \nu_k$ be the inner unit normal vectors of $S_1, \cdots, S_k$ at $x_0$, respectively, 
and denote by $N$ the matrix $(\nu_1, \cdots, \nu_k)$ as in \eqref{eq-Definition-N}. 
We fix an $x\in \Omega$ near $x_0\in \partial\Omega$ and 
let $p_i$ be the point on $S_i$ with the least distance to $x$.
Denote by $\nu_{p_i}$ the inner unit normal vector of $S_i$ at $p_i$.
Let $ B_r(o_i) $ be the interior tangent ball of $S_i$ at $p_i$ with a radius $r$
and $B_r(o_i')$ be the exterior tangent ball of $S_i$ at $p_i$, with $r$ to be determined.
We point out that $ B_r(o_i) $ is not necessarily in $\Omega$ and that $B_r(o_i')$
is not necessarily outside $\Omega$. We now divide the proof in several steps. 

\smallskip 

{\it Step 1.} We construct two sets, one inside $\Omega$ and one containing $\Omega$. 
The set inside $\Omega$ is out of $ B_r(o_1), \cdots, B_r(o_k)$, while the set containing 
$\Omega$ is out of $ B_r(o_1')^c, \cdots, B_r(o_k')^c$.

We first prove that $ \partial B_r(o_1), \cdots, \partial B_r(o_k) $ intersect at at least two points 
with their distance bounded from below and that a similar result holds for $B_r(o_i')$. 

Denote by $P_{p_i}$ the tangent plane of $S_i$ at $p_i$. We assume $x= 0$, 
and
$$\nu_{p_{i}}= (\nu^{p_i}_{1},\cdots,\nu^{p_i}_{k},0_{\mathbb R^{n-k}}).$$
Consider the matrices
$$N_k =((\nu^{p_1}_{1},\cdots,\nu^{p_1}_{k})^{T},  \cdots, (\nu^{p_k}_{1},\cdots,\nu^{p_k}_{k})^{T}),$$
and 
$$N'=(\nu_{p_1},\cdots, \nu_{p_k}).$$ Note that
$N_k$ is a $k\times k$ matrix and $N'$ an $n\times k$ matrix.
We can parameterize
$P_{p_i}$ as $$\langle \nu_{p_i},y-p_i \rangle=0.$$ For $|x-x_0|$ small, we have
$$\|N_{k}^{-1}\|^{2} =\| (N'^{T} N')^{-1}\| \leq \frac{\|(N^T N)^{-1}\|}{1-C\|(N^T N)^{-1}|x-x_0|\|}<2\|(N^T N)^{-1}\|,$$
where $C$ is some positive constant depending only on $n$, $\|(N^T N)^{-1}\|$, and the
geometry of $\partial\Omega$.
We have $$|\langle \nu_{p_i},p_i\rangle|=\langle \nu_{p_i},p_i- x\rangle| \leq |x-x_0|.$$
Consider the vector $b=(\langle \nu_{p_1},p_1 \rangle,\cdots,\langle \nu_{p_k},p_k \rangle)^{T}\in \mathbb R^k$.
Then the system of linear equations
$$ \langle \nu_{p_i},y-p_i\rangle=0 \quad\text{for } i=1,...,k,$$
has a solution $p=((N_{k}^{T})^{-1}b,0_{\mathbb R^{n-k}})$ and $|p|\le C|x-x_0|.$

We view the graph of $\partial B_r(o_i)$ and  $\partial B_r(o_i')$ near $p$ as
small perturbations of $$\langle \nu_{p_i},y-p_i\rangle =0.$$
In other words, near 0,
$\partial B_r(o_i)$ is parametrized by
$$ \langle \nu_{p_i},y-p_i \rangle=\widetilde{g}_i(y),$$
and $ \partial B_r(o_i')$ is parametrized by
$$ \langle \nu_{p_i},y-p_i \rangle=\widehat{g}_i(y),$$
where we have, for $y \in B_{C|x-x_0|}$,
$$|\widetilde{g}_i(y)|,|\widehat{g}_i(y)|\le C_0(|x-x_0|+|y|)^2,$$
and
$$|D_y\widetilde{g}_i(y)|,|D_y\widehat{g}_i'(y)|\le C_0(|x-x_0|+|y|).$$
It is easy to verify that $G(y)=(N_{k}^{T})^{-1}(\widetilde{g}_{i}(y)+b)$
is a contraction mapping on $B_{C|x-x_0|}\bigcap$ $(\mathbb R^k\times\{0_{\mathbb R^{n-k}}\})$,
if $|x-x_0|$ is small.
Therefore, the  system of equations
$$y=G(y)$$ has a solution in $B_{C|x-x_0|}\bigcap (\mathbb R^k\times\{0_{\mathbb R^{n-k}}\})$.
Hence, there exists a point $\widetilde{p}$ such that
$$\widetilde{p} \in B_{C|x-x_0|}\bigcap (\mathbb R^k\times\{0_{\mathbb R^{n-k}}\})
\bigcap (\bigcap_{i=1}^k\partial B_r(o_i)).$$
Denote by $\widetilde{\nu}_{i}$ the inner normal vector of $B_r(o_i)$ at $\widetilde{p}$,
and by $\widetilde{N}$ the matrix $(\widetilde{\nu}_{1},...,\widetilde{\nu}_{k})$.
For $|x-x_0|$ small, we have
$$|\det(\widetilde{N}^{T}\widetilde{N})|=|\det N^{T}N ||
\det[I+(N^T N)^{-1}(\widetilde{N}^{T} \widetilde{N}-N^{T} N) ]  |\geq\frac14|\det N^T N |,$$
and
$$ \|(\widetilde{N}^{T}\widetilde{N})^{-1}\| \leq \frac{\|(N^T N)^{-1}\|}{1-C\|(N^T N)^{-1}|x-x_0|\|}
<2\|(N^T N)^{-1}\|.$$
By Lemma \ref{n ball inters}, we have
$$\bigcap_{i=1}^k \partial B_r(o_i) \bigcap (\mathbb R^k\times\{0_{\mathbb R^{n-k}}\})
=\{\widetilde{p},\widetilde{q}\},$$
and
$$|\widetilde{p}\widetilde{q}| >
r\frac{| \sqrt{\det \widetilde{N}^{T}\widetilde{N}}|}{2^{k-2}} > r\frac{\sqrt{\det N^T N}}{2^{k-1}}.$$
Similarly, there exists a point $\widehat{p}$,
$$\widehat{p} \in B_{C|x-x_0|}\bigcap(\mathbb R^k\times\{0_{\mathbb R^{n-k}}\})
\bigcap  (\bigcap_{i=1}^k\partial B_r(o_i')).$$
Denote by $\widehat{\nu}_{i}$ the unit outer normal vector of $B_r(o_i')$ at $\widehat{p}$,
and by $\widehat{N}$ the matrix $(\widehat{\nu}_{1},...,\widehat{\nu}_{k})$.  For $|x-x_0|$ small, we have
$$|\det\widehat{N}^{T}\widehat{N}|=|\det N^{T}N ||
\det(I+ (N^{T}N)^{-1}(\widehat{N}^{T}\widehat{N}-N^{T}N)   |\geq\frac14|\det N^{T}N |,$$
and
$$ \|(\widehat{N}^{T}\widehat{N})^{-1}\| \leq \frac{\|(N^{T}N)^{-1}\|}{1-C\|(N^{T}N)^{-1}|x-x_0|\|}
<2\|(N^{T}N)^{-1}\|.$$
Moreover,
$$\bigcap_{i=1}^k \partial B_r(o'_i)\bigcap (\mathbb R^k\times\{0_{\mathbb R^{n-k}}\}) =\{\widehat{p},\widehat{q}\},$$
and
$$|\widehat{p}\widehat{q}| > 2r\frac{\sqrt{\det\widehat{N}^{T} \widehat{N}}}{2^{k-1}}
> r\frac{\sqrt{\det N^{T}N}}{2^{k-1}}.$$

We now construct two sets, one inside $\Omega$ and another containing $\Omega$. 

Suppose, for some constant $R>0$,  $\Omega  \bigcap B_{R}(x_0)$ 
can be expressed by the union of some $\Omega_{(l_1,\cdots,l_k)}$, i.e., 
\begin{equation}\label{eq-Domaincomponet2}\Omega= \bigcup \Omega_{(l_1,\cdots, l_k)}, \end{equation}
where the union is over a finite set of vectors $(l_1,\cdots,l_k)$,  with $l_i=1$ or $-1$
for each $i=1,\cdots,k$. Refer to discussions in Section \ref{sec-Domains}. 

With $B_r(o_i)$ replacing $B_{1}^{n}(o_i)$,  
we can define $\widetilde{B}_{(l_1,\cdots,l_k)}$ as in \eqref{eq-definition-tildeB}, for any
$(l_1,\cdots,l_k)$,  with $l_i=1$ or $-1$
for each $i=1,\cdots,k$.
Then, we set 
\begin{equation}\label{eq-interiorballcomponet2}\widetilde{B}= \bigcup \widetilde{B}_{(l_1,\cdots,l_k)}, \end{equation}
where the union is over the same set of vectors 
$(l_1,\cdots,l_k)$ as in \eqref{eq-Domaincomponet2}. We note that $\widetilde B$ is a Lipschitz domain. 

Similarly, with $B_r(o_i')$ replacing  $B_{1}^{n}(o'_i)$, 
we can define $\widehat{B}_{(l_1,\cdots,l_k)}$ as in \eqref{eq-definition-hatB}, for any
$(l_1,\cdots,l_k)$,  with $l_i=1$ or $-1$
for each $i=1,\cdots,k$.
Then, we set 
\begin{equation}\label{eq-exteriorballcomponet2}\widehat{B}= \bigcup \widehat{B}_{(l_1,\cdots,l_k)}, \end{equation}
where the union is over the same set of vectors 
$(l_1,\cdots,l_k)$ as in \eqref{eq-Domaincomponet2}. Similarly, $\widehat B$ is a Lipschitz domain.

For some small constants $r$ and $r^*$ depending only on the
geometry of $\partial\Omega$, we have $\widetilde{B} \subseteq \Omega$
and $\Omega\subseteq \widehat{B}$ if $|x-x_0|\leq r^*$. 
We can check this by Lemma \ref{lemma-domain-relation}(ii). 
We point out that $r^*$ is relatively small compared with $r$. 
We also note that each ball
$B_r(o_i)$ is above the corresponding hypersurface $S_i$, although it is not necessarily in $\Omega$, 
and that each ball $B_r(o_i')$ is below the corresponding hypersurface $S_i$.

\smallskip

{\it Step 2.} We now compare the solution $u$ with the corresponding solution in the tangent cone. 

Let $\widetilde{u}$ be the solution of \eqref{eq-MainEq}-\eqref{eq-MainBoundary}
for $\Omega = \widetilde{B}$. By the maximum principle,
we have $u \leq\widetilde{u}$ in $\widetilde{B}$. In particular, we have $u(x) \leq\widetilde{u}(x)$.
By a translation and a rotation, we assume
$\widetilde{p}=(\widetilde{a},0,\cdots,0)$, and $\widetilde{q}=(-\widetilde{a},0,\cdots,0)$,
where
\begin{equation}\label{eq-LowerBound-tilde-a}
\widetilde a=\frac{|\widetilde{p}\widetilde{q}|}{2} >r \frac{|\sqrt{\det N^{T}N}|}{2^{k}}.\end{equation} 
Let $T_{\widetilde a}$ be the conformal transform given by \eqref{eq-ConformalTransform}, 
with $\widetilde a$ replacing $a$, and 
$\widetilde{v}$ be the solution of \eqref{eq-MainEq}-\eqref{eq-MainBoundary}
for $\Omega =\widetilde{V}=T_{\widetilde{a}}(\widetilde{B})$. 
Then, by \eqref{eq-Jacobi} and \eqref{eq-tilde-u}, 
the Jacobi matrix $(\frac{\partial T_{\widetilde a}}{\partial x})$ has the form 
$$\left(\frac{\partial T_{\widetilde a}}{\partial x}(x)\right)
=\frac{2\widetilde a^2}{\widetilde a^2+2\widetilde ax_1+|x|^2}O(x),
$$
where $O(x)$ is an orthogonal matrix, and 
\begin{align*}\widetilde{u}(x)
=\widetilde{v}(T_{\widetilde a}x)\left(\frac{2\widetilde{a}^2}{\widetilde a^2+2\widetilde ax_1+|x|^2}\right)^{\frac{n-2}{2}}.
\end{align*}
Let $\widetilde{P}_i$ be the hyperplane transformed from $ \partial B_r(o_i)$ by the map
$T_{\widetilde{a}}$.
Denote by $\widetilde{d}_i $ the signed distance from $T_{\widetilde a}x$ to $\widetilde{P}_i$.
We have
$$|x-\widetilde{p}|\leq C|x-x_0|.$$
By the explicit form of the Jacobi matrix $(\frac{\partial T_{\widetilde a}}{\partial x})$, we have, for $|x-x_0|$ small,
$$\widetilde{d}_i=
\left[\frac{2\widetilde{a}^2}{\widetilde a^2+2\widetilde ax_1+|x|^2}+O(|x-x_0|)\right]d_i.$$
Then, by using $\widetilde x=(\widetilde a, 0, \cdots, 0)$ at $\widetilde p$ and the lower bound of
$\widetilde a$ in \eqref{eq-LowerBound-tilde-a}, we obtain
$$\widetilde{d}_i=
\left(\frac{1}{2}+O(|x-x_0|)\right)d_i.$$
We now write
$\widetilde{v}=f_{\widetilde{V}}(\widetilde{d}_1,\cdots,\widetilde{d}_k)$
as in \eqref{eq-Solution-Cone-d-coordinates-nd2}.
By the scaling property \eqref{eq-scaling} and Lemma \ref{lemma-AnisotropicEstimate},
we have
$$f_{\widetilde{V}}(\widetilde{d}_1,\cdots,\widetilde{d}_k)
\leq f_{\widetilde{V}}(d_1,\cdots,d_k)\left(\frac{1}{2}-C|x-x_0|\right)^{-\frac{n-2}{2}}.$$
Therefore,
\begin{equation*}
u(x)\leq \widetilde{u}(x)=f_{\widetilde{V}}(\widetilde{d}_1,\cdots,\widetilde{d}_k)
\left[\frac{2\widetilde{a}^2}{\widetilde a^2+2\widetilde ax_1+|x|^2}\right]^{\frac{n-2}{2}}
\leq f_{\widetilde{V}}(d_1,\cdots,d_k)(1+C|x-x_0|).
\end{equation*}
Note $\|\widetilde{N}-N\|\leq C|x-x_0|$. By Lemma \ref{slu-k n1-n2 under tr-general},
we have $$ f_{\widetilde{V}}(d_1,\cdots,d_k) \leq  f_{V_{x_0}}(d_1,\cdots,d_k) (1+C|x-x_0|).$$
Therefore,
\begin{equation}\label{u leq-main}
u(x)\leq f_{V_{x_0}}(d_1,\cdots,d_k)(1+C|x-x_0|).
\end{equation}

Let $\widehat{u}$ be the solution of \eqref{eq-MainEq}-\eqref{eq-MainBoundary}
for $\Omega =\widehat{B} $. By the maximum principle, $\widehat{u} \leq u$ in $\Omega$.
In particular, we have $\widehat{u}(x) \leq u(x)$.
By a translation and a rotation, we assume
$\widehat{p}=(\widehat{a},0,\cdots,0)$ and $\widehat{q}=(-\widehat{a},0,\cdots,0)$,
where
$$\widehat{a}=\frac{|\widehat{p}\widehat{q}|}{2} >r \frac{|\sqrt{\det N^{T}N}|}{2^{k}}.$$ Similarly, we get
\begin{equation*}
u(x)\geq \widehat{u}(x)= \widehat{v}(T_{\widehat{a}}x)
\left[\frac{2\widehat{a}^2}{\widehat a^2+2\widehat ax_1+|x|^2}\right]^{\frac{n-2}{2} }
\geq f_{\widehat{V}}(d_1,\cdots,d_k)(1-C|x-x_0|),
\end{equation*}
where $T_{\widehat a}$ is the conformal transform 
given by \eqref{eq-ConformalTransform}, 
with $\widehat a$ replacing $a$, 
$\widehat{v}$ is the solution of \eqref{eq-MainEq}-\eqref{eq-MainBoundary} for
$\Omega =\widehat{V}=T_{\widehat{a}}(\widehat{B})$ and
$\widehat{v}=f_{\widehat{V}}(\widehat{d}_1,\cdots,\widehat{d}_k)$ as  in \eqref{eq-Solution-Cone-d-coordinates-nd2}.
By Lemma \ref{slu-k n1-n2 under tr-general}, we have
$$f_{\widehat{V}}(d_1,\cdots,d_k) \geq f_{V_{x_0}}(d_1,\cdots,d_k) (1-C|x-x_0|).$$
Therefore,
\begin{equation}\label{u geq-main}
u(x) \geq f_{V_{x_0}}(d_1,\cdots,d_k) (1-C|x-x_0|).
\end{equation}

Combining \eqref{u leq-main} and \eqref{u geq-main}, we complete the proof.
\end{proof}

Now we compare the proof of Theorem \ref{opytimal estimate nd-nface} and that of 
Theorem \ref{thrm-C-1,alpha-expansion}. In the proof of Theorem \ref{thrm-C-1,alpha-expansion}, we 
construct two balls, one inside $\Omega$ and one outside $\Omega$, and then compare the solution $u$ with 
the corresponding solution in the interior ball and the solution outside the 
exterior ball.  In the proof of Theorem \ref{opytimal estimate nd-nface}, we replace
the interior ball and the complement of the  
exterior ball by $\widetilde B$ and $\widehat B$, constructed from the interior tangent balls and 
the complements of the exterior tangent balls, respectively, and then compare the solution $u$ with 
the corresponding solutions in these two sets. The conformal structure of the equation plays an essential role
in the present proof. 

\smallskip 
We are ready to prove the main result in this paper. 

\begin{proof}[Proof of Theorem \ref{main reslut}]
We adopt the notations from Theorem \ref{opytimal estimate nd-nface} and its proof. Let $\Omega$ be 
bounded by $C^{1, 1}$ hypersurfaces $S_1, \cdots, S_k$ near $x_0\in\partial\Omega$ and the 
tangent cone $V_{x_0}$ of $\Omega$ at $x_0$ be bounded by $P_1, \cdots, P_k$, the tangent planes 
of $S_1, \cdots, S_k$ at $x_0$, respectively, with $\nu_1, \cdots, \nu_k$ the inner unit normal vectors. 

First, we consider the case $k=n$. Without loss of generality, we assume $x_0$ is 
the origin. For any $x$ sufficiently small, we define 
$$T_{\{S_i\}}x=(d_1(x), \cdots, d_n(x)),$$
where $d_i(x)$ is the signed distance from $x$ to $S_i$ with respect to $\nu_i$, $i=1, \cdots, n$. 
We emphasize that $T_{\{S_i\}}$ is defined in a full neighborhood of the origin instead of only in $\Omega$ 
and that the signed distance is used instead of its absolute value. Then, $T_{\{S_i\}}$ is $C^{1,1}$ near the origin and 
its Jacobi matrix at the origin is nonsingular by the linear independence of 
$\nu_1, \cdots, \nu_n$. Therefore, $T_{\{S_i\}}$ is a $C^{1,1}$-diffeormorphism 
in a neighborhood of the origin. We have a similar result for $T_{\{P_i\}}$, with $P_1, \cdots, P_n$ replacing 
$S_1, \cdots, S_n$. In fact, $T_{\{P_i\}}$ is a linear transform, since $P_1, \cdots, P_n$ are hyperplanes 
passing the origin. Then, the map $T=T^{-1}_{\{P_i\}}\circ T_{\{S_i\}}$ is a $C^{1,1}$-diffeomorphism 
near the origin and has the property that the signed distance from $x$ to $S_i$ is the same as that from $Tx$
to $P_i$, for $i=1, \cdots, n$.

Next, we consider the case $k< n$. 
We add hyperplanes $P_{k+1}, \cdots, P_n$ passing the point $x_0$ 
with their unit normal vectors $\nu_{k+1}, \cdots, \nu_n$ forming an orthonormal basis 
of the orthogonal complement of Span$\{\nu_1,..,\nu_k\}$, as in the proof of 
Theorem \ref{slu-k n1-n2 under tr-general}. 
We denote by $d_{j}$ the signed distance from $x$ to $P_{j}$ with respect to $\nu_j$, 
$j=k+1,\cdots,n.$ 
Then we can construct the map $T$ as in the case $k=n$.
\end{proof} 

We point out that the assumption $k\le n$ plays an essential role in the proof of 
the optimal estimate \eqref{main-estimate} as stated in 
Theorem \ref{main reslut}. For example, 
$k=n$ is needed crucially in the identification of the points by their 
signed distances to $n$ $C^{1,1}$-hyperplanes as in Section \ref{sec-Domains}
and hence in the construction of 
the transform $T$. 
Moreover, the assumption $k\le n$ is used to find an intersect of $k$ spheres near $x_0$ 
as in Step 1 of the proof of Theorem \ref{opytimal estimate nd-nface}
and to obtain a lower bound of the distance between this intersect and another intersect away from $x_0$ as 
in Lemma \ref{n ball inters}. 

\section{Solutions in General Singular Domains}\label{Slu-GerSingularDomains}

In this section, we study asymptotic
expansions near singular points for more general domains. Specifically, 
we allow $k>n$ in Theorem \ref{opytimal estimate nd-nface} and derive optimal estimates  
for points strictly located inside the tangent cone. The discussion again relies essentially
on the conformal invariance of the equation \eqref{eq-MainEq}.

Let $\Omega $ be a bounded Lipschitz domain. We fix a point $x_0 \in \partial \Omega$ and
assume that it is the origin  such that, for some $R>0$, 
$$\Omega\cap B_{2R}(x_{0})
=\{x\in B_{2R}(x_0): x_{n}>f(x')\},$$ 
for some Lipschitz function $f$ on $B_{2R}'$ with $f(0)=0$. 
Then, there exists a finite circular cone
$V_{\theta_{0}}$, with $x_{0}$ as its vertex, $x_n$-axis as the axis of the
cone, an apex angle $2\theta_{0}$ and a height $h$, 
such that 
\begin{equation}\label{eq-FiniteCone}
V_{\theta_{0}}\subseteq \overline{\Omega},\quad -V_{\theta_{0}}\subseteq \Omega^{c},\end{equation}
where $-V_{\theta_{0}}$ is the reflection of $V_{\theta_{0}}$ about $x_{n}=0$ and 
$\theta_{0}$ and $h$ are constants depending only on the geometry of $\partial \Omega$.

Fix an integer $k\ge 2$. Recall the domain $\Omega$ introduced in 
Definition \ref{def-Domain-lipschitz} and the subsequent discussion of its decomposition 
and its tangent cones.

We now prove an important property concerning tangent cones.

\begin{lemma}\label{lemma-TangentCones}
For some point $x_0\in\partial\Omega$, let 
$\Omega$ be a bounded Lipschitz domain bounded 
by $k$ $C^{1,1}$-hypersurfaces $S_1, \cdots, S_k$ near $x_0$ 
as in Definition \ref{def-Domain-lipschitz}
and let
$V_{x_{0}}$ be the tangent cone of $\Omega$ at $x_0$ in the setting following 
Definition \ref{def-Domain-lipschitz}. 
Then,  for any small $r$,
$$\left(V_{x_{0}}+\frac{Mr^{2}}{\sin\theta_{0}}e_n\right)
\bigcap B_{r} (x_{0})\subset \Omega,$$
and
$$\Omega \bigcap B_{r} (x_{0})
\subset
V_{x_{0}}-\frac{Mr^{2}}{\sin\theta_{0}}e_n,$$
where $M$ is the maximum of the $C^{1,1}$-norms of $S_1,\cdots, S_k$ near $x_0$
and $\theta_0$ is introduced for \eqref{eq-FiniteCone}. 
\end{lemma}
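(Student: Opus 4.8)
The plan is to work in the standard coordinate system set up after Definition \ref{def-Domain-lipschitz}, where each $S_i\cap B_R(x_0)$ is a graph $x_n=f_i(x')$ over $B_R'(x_0')$ with $f_i(0)=0$, each tangent plane $P_i$ is a graph $x_n=L_i(x')$ with $L_i$ linear and $L_i(0)=0$, and $\Omega\cap B_R(x_0)=\{x_n>g(x')\}$ with $g\in\{f_1,\dots,f_k\}$ locally, the tangent cone being $V_{x_0}=\{x_n>\ell(x')\}$ with $\ell\in\{L_1,\dots,L_k\}$ locally, and the two decompositions \eqref{eq-Domaincomponet} and \eqref{tangent cone} taken over the same index set. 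The key quantitative input is that $S_i$ being $C^{1,1}$ with $Df_i(0)=DL_i(0)$ (the tangent plane condition) gives the second-order estimate $|f_i(x')-L_i(x')|\le M|x'|^2$ for $|x'|$ small, where $M$ bounds the $C^{1,1}$-norms; hence also $|g(x')-\ell(x')|\le M|x'|^2$ on each piece.

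First I would translate the graph estimate into a statement about vertical shifts: for $x\in B_r(x_0)$ one has $|x'|<r$ (taking $x_0$ as origin), so $|f_i(x')-L_i(x')|\le Mr^2$ for each $i$. This means that shifting the cone $V_{x_0}$ upward by $Mr^2 e_n$ puts it strictly above every hypersurface $S_i$ locally, i.e.\ $V_{x_0}+Mr^2 e_n$ is contained in $\{x_n>f_i(x')\}$ wherever the sign is "$+1$" and similarly the complementary inclusions hold face by face. I would then invoke Lemma \ref{lemma-domain-relation}(ii): because the decompositions of $\Omega$ and $V_{x_0}$ are indexed identically, a component $V_{(l_1,\dots,l_k)}$ appears in $V_{x_0}$ iff $\Omega_{(l_1,\dots,l_k)}$ appears in $\Omega$, and the vertical shift converts each cone component into the corresponding domain component with room to spare, so the union is preserved. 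This handles $(V_{x_0}+Mr^2e_n)\cap B_r(x_0)\subset\Omega$ with the coefficient $Mr^2$; the extra factor $1/\sin\theta_0$ is a safety margin absorbing the mismatch that the shift is vertical while the region $B_r(x_0)$ is a ball, plus the need to stay inside the coordinate chart where the graph representation is valid — the finite cone condition \eqref{eq-FiniteCone} with angle $\theta_0$ guarantees the chart is uniformly nondegenerate, and $\sin\theta_0$ quantifies how far a vertical shift by $Mr^2$ moves a point relative to the boundary. The second inclusion $\Omega\cap B_r(x_0)\subset V_{x_0}-\tfrac{Mr^2}{\sin\theta_0}e_n$ is symmetric: each point of $\Omega$ near $x_0$ lies above $g$, hence above $\ell-Mr^2$, hence in the downward-shifted cone, again assembled over components via Lemma \ref{lemma-domain-relation}(ii) applied in reverse.

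The main obstacle I anticipate is bookkeeping rather than depth: one must carefully check that the face-by-face vertical-shift inclusions are compatible with taking the union over the common index set, and that the small parameter $r$ is chosen below a threshold (depending on $M$ and $\theta_0$) so that $B_r(x_0)$ sits inside $B_R(x_0)$ and the shifted cones stay within the region where the graph estimates $|f_i-L_i|\le M|x'|^2$ actually hold; the role of $\sin\theta_0$ is precisely to make these thresholds explicit and uniform across boundary points. Once the geometry is pinned down, the inclusions follow directly from the second-order Taylor estimate for $C^{1,1}$ graphs and from part (ii) of Lemma \ref{lemma-domain-relation}.
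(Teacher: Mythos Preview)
Your proposal is correct and follows essentially the same route as the paper's proof: both work in the standard graph coordinates of Definition \ref{def-Domain-lipschitz}, use the $C^{1,1}$ Taylor estimate $|f_i(x')-L_i(x')|\le M|x'|^2$ to show that after a vertical shift the ``$+1$'' half-space inclusions are preserved (while the ``$-1$'' ones may only improve), and then assemble the result over the common index set via the monotonicity of Lemma \ref{lemma-domain-relation}(ii). The only cosmetic difference is that the paper first proves the inclusion on a ball of radius $(M^{-1}r)^{1/2}$ with shift $r/\sin\theta_0$ and then relabels, whereas you work directly with $B_r$ and shift $Mr^2/\sin\theta_0$.
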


\begin{proof} We assume $x_0$ is the origin. 
Let $P_{i}'$ and $P_{i}''$ be the hyperplanes transformed from $P_{i}$ in the direction of
$\nu_i$ and $-\nu_i$ by a distance of $r^{2 }$, respectively. 
Assume that the hyperplane $P_i$ is expressed by 
$x_{n}=L_{i}(x')$ for some linear function $L_i$ and that 
the hypersurface $S_i$ near $x_0$ is expressed by $x_{n}=f_{i}(x')$
for some $C^{1,1}$-function $f_i$, 
for each $i=1,2,\cdots,k$. Note, by adjusting $M$, 
$$L_{i}(x')-M|x'|^{2}\leq f_{i}(x')\leq L_{i}(x')+M|x'|^{2}.$$
By 
$$M\left((M^{-1}r)^{\frac{1}{2}}\right)^{2}=r,$$
we have
\begin{equation}\label{domain contain cone}\left(V_{x_{0}}+\frac{r}{\sin\theta_{0}}e_n\right)
\bigcap B_{(M^{-1}r)^{\frac{1}{2}}} (x_{0})\subset \Omega,\end{equation}
and
\begin{equation}\label{cone contain domain}\left(\Omega \bigcap B_{(M^{-1}r)^{\frac{1}{2}}} (x_{0})\right)
\subset V_{x_{0}}-\frac{r}{\sin\theta_{0}}e_n.\end{equation}
In fact, take any $x \in \left(V_{x_{0}}+\frac{r}{\sin\theta_{0}}e_n\right)\bigcap B_{(M^{-1}r)^{\frac{1}{2}}} (x_{0})$.
Then, for some $(l_1,\cdots,l_k)$ in \eqref{eq-Domaincomponet2},
$$x\in\left(V_{(l_1,\cdots,l_k)}+\frac{r}{\sin\theta_{0}}e_n\right)\bigcap B_{(M^{-1}r)^{\frac{1}{2}}} (x_{0}),$$
and, as a consequence, for some $(l_1',\cdots,l_k') \geq (l_1,\cdots,l_k)$,
$$x\in \left(\Omega_{(l_1',\cdots,l_k')}+\frac{r}{\sin\theta_{0}}e_n\right)\bigcap B_{(M^{-1}r)^{\frac{1}{2}}} (x_{0}),$$  
since the graph of $ P_i+\frac{r}{\sin\theta_{0}}e_n$ is above the graph of $S_i$ 
in $B'_{(M^{-1}r)^{\frac{1}{2}}} (x_{0})$. 
Hence, $x \in \Omega$. This proves \eqref{domain contain cone}. 
A similar argument yields \eqref{cone contain domain}. 
We have the desired result by renaming radii in \eqref{domain contain cone}
and \eqref{cone contain domain}.
\end{proof}

Lemma \ref{lemma-TangentCones} asserts the following statement:
In a neighborhood of $x_{0}$, if $V_{x_{0}}$ is translated in the direction of
$e_n$ by an appropriate distance,
its graph is above the graph of $\partial\Omega$,  and if $V_{x_{0}}$
is translated in the direction of $-e_n$ by
the same distance,  its graph is below the graph of $\partial\Omega$.

We now prove the main result in this section. 

\begin{theorem}\label{opytimal estimate general domain a b}
For some point $x_0\in\partial\Omega$, let 
$\Omega$ be a bounded Lipschitz domain bounded 
by $k$ $C^{1,1}$-hypersurfaces $S_1, \cdots, S_k$ near $x_0$ 
as in Definition \ref{def-Domain-lipschitz}
and let
$V_{x_{0}}$ be the tangent cone of $\Omega$ at $x_0$. 
Suppose that $ u \in
C^{\infty}(\Omega)$ is a solution of \eqref{eq-MainEq}-\eqref{eq-MainBoundary} and that 
$v$ is the corresponding solution in $V_{x_0}$.
Then, for any $\delta >0$ and 
any $x\in\Omega$ close to $x_0$ with $\textrm{dist}(x,\partial \Omega) >\delta |x-x_0|$,
\begin{align}
\label{main-result2} |u(x)-v(x)|\leq C \delta^{-1}u(x)|x-x_0|,
\end{align}
where $C$ is a positive constant depending only on $n$, $R$, $\theta_0$
and the
$C^{1,1}$-norms of hypersurfaces $S_1, \cdots, S_k$ near $x_0$.
\end{theorem}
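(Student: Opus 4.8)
The plan is to adapt the conformal--sandwich mechanism behind Theorem \ref{opytimal estimate nd-nface}: trap $u$ between the Loewner--Nirenberg solutions of two \emph{full} domains $\widetilde B\subset\Omega\subset\widehat B$ which are conformally equivalent to infinite cones close to $V_{x_0}$, and then read off \eqref{main-result2} at the interior point $x$. The new point, which is what dispenses with the restriction $k\le n$, is that the comparison domains will be produced from Lemma \ref{lemma-TangentCones} rather than from the intersecting--spheres construction of Section \ref{sec-Singular-1} (which requires at most $n$ spheres); the price is that the conclusion is asserted only at points with $\mathrm{dist}(x,\partial\Omega)>\delta|x-x_0|$.

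\emph{Step 1 (geometric sandwiching).} Fix $x\in\Omega$ near $x_0=0$, put $\rho=|x-x_0|$ and $d_0=\mathrm{dist}(x,\partial\Omega)>\delta\rho$. Taking $r=2\rho$ and $\sigma:=Mr^{2}/\sin\theta_0=O(\rho^{2})$ (so that $\sigma\ll\delta\rho$, the regime $\rho\gtrsim\delta$ being vacuous since then $x$ need not lie in $V_{x_0}$), Lemma \ref{lemma-TangentCones} gives
$$\bigl(V_{x_0}+\sigma e_n\bigr)\cap B_r(x_0)\subset\Omega\subset V_{x_0}-\sigma e_n\quad(\text{the last inclusion inside }B_r(x_0)).$$
Because $V_{x_0}=\{x_n>g(x')\}$ is a supergraph in the standard position of Section \ref{sec-Domains} and $e_n\in V_{x_0}$, one also has $V_{x_0}+\sigma e_n\subset V_{x_0}\subset V_{x_0}-\sigma e_n$, so that $x\in V_{x_0}$ and $\mathrm{dist}(x,\partial V_{x_0})=d_0+O(\rho^{2})$, lying between $(1-C\rho/\delta)d_0$ and $(1+C\rho/\delta)d_0$.

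\emph{Step 2 (conformal comparison domains).} I would then build, from the translated cones $V_{x_0}\pm\sigma e_n$ and the truncation supplied by Step 1, two bounded Lipschitz domains $\widetilde B\subset\Omega$, $\widehat B\supset\Omega$ near $x_0$, each conformally equivalent via an inversion $\Psi$ to an infinite cone conjugate to $V_{x_0}$, exactly as $\widetilde B,\widehat B,\widetilde V,\widehat V$ were produced in the proof of Theorem \ref{opytimal estimate nd-nface}. The crucial structural fact is that \emph{any} inversion applied to a cone bounded by $k$ hyperplanes through its vertex produces a region bounded by $k$ spheres sharing the two common points $\Psi(\text{vertex})$ and $\Psi(\infty)$ --- this requires no relation between $k$ and $n$ and so bypasses Lemma \ref{n ball inters}. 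By monotonicity of the Loewner--Nirenberg solution under inclusion one gets $u_{\widehat B}\le u\le u_{\widetilde B}$ with no correction term, and by conformal invariance of \eqref{eq-MainEq} the functions $u_{\widetilde B},u_{\widehat B}$ are $v(\cdot\mp\sigma e_n)$ pulled back by $\Psi$ times the conformal factor; choosing $\Psi$ with its center far from $x_0$ makes $\Psi=\mathrm{id}+O(\rho)$ and the factor $1+O(\rho)$ on $B_r(x_0)$, whence $u_{\widetilde B}(x)=v(x-\sigma e_n)(1+O(\rho))$ and $u_{\widehat B}(x)=v(x+\sigma e_n)(1+O(\rho))$. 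Since the segment from $x$ to $x\mp\sigma e_n$ lies in $V_{x_0}$ at distance $\gtrsim d_0\gtrsim\delta\rho$ from $\partial V_{x_0}$, the gradient estimate of Lemma \ref{lemma-GradientEstimates} for $v$ in the infinite cone $V_{x_0}$ (valid at every point, by Remark \ref{rmrk-delta}) gives $|v(x\mp\sigma e_n)-v(x)|\le C\sigma v(x)/d_0\le C\delta^{-1}\rho\,v(x)$, and combining yields $v(x)(1-C\delta^{-1}\rho)\le u(x)\le v(x)(1+C\delta^{-1}\rho)$, which is \eqref{main-result2}.

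\emph{Main obstacle.} The heart of the argument is Step 2: producing \emph{full} comparison domains --- so that bare monotonicity applies without any correction --- that are simultaneously squeezed between $\Omega$ and honest cones and carry a conformal factor which is $1+O(\rho)$ at $x$. The difficulty is that a naive truncation of an infinite cone by a ball creates an artificial interior boundary whose correction term is $O(1)$, not $O(\rho)$, relative to $v(x)$, and more crudely that the comparison constants for $u$ and $v$ near the boundary do not match; it is precisely the conformal structure of \eqref{eq-MainEq}, exploited as in Theorem \ref{opytimal estimate nd-nface}, that allows $\Omega$ to be bounded above and below by genuine cones via sphere--bounded regions, so that the solutions are related \emph{exactly} and no correction is needed. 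Verifying that the $k$ bounding spheres can be chosen through two common points while still fitting between $\Omega$ and $V_{x_0}\pm\sigma e_n$ up to $O(\rho^{2})$ --- which is where Lemma \ref{lemma-TangentCones}, the uniform interior cone in \eqref{eq-FiniteCone}, and the interior-point hypothesis all enter --- is the technical core of the proof.
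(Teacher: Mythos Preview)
Your overall strategy---sandwich $u$ between Loewner--Nirenberg solutions on sphere-bounded regions $\widetilde B\subset\Omega\subset\widehat B$ that are conformal images of infinite cones, then use the gradient estimate of Lemma~\ref{lemma-GradientEstimates} at the interior point $x$---is exactly the paper's approach. You also correctly locate the crux: producing such $\widetilde B,\widehat B$ with no artificial interior boundary and with a conformal factor that is $1+O(\rho)$ at $x$. But your Step~2 leaves this crux unresolved, and your proposed route (invert the translated cones $V_{x_0}\pm\sigma e_n$ supplied by Lemma~\ref{lemma-TangentCones}, with an inversion ``far from $x_0$'' so that $\Psi=\mathrm{id}+O(\rho)$) is both vaguer and more complicated than what the paper actually does. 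In particular, an inversion is nowhere close to the identity, so ``$\Psi=\mathrm{id}+O(\rho)$'' cannot be taken literally; what one really needs is that $\Psi$ differ from an affine map by $O(\rho^{2})$ near $x_0$, and then invoke the scaling of $v$.

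The paper's resolution is more direct and does not pass through the translated cones at all. With $\nu_i$ the inner unit normal of $S_i$ at $x_0$ and $r>0$ a \emph{fixed} small constant (independent of $x$), set $r_i=r/\langle\nu_i,e_n\rangle$ and take the balls $\widetilde B_i=B_{r_i}(x_0+r_i\nu_i)$, $\widehat B_i=B_{r_i}(x_0-r_i\nu_i)$. Each $\partial\widetilde B_i$ is tangent to $P_i$ at $x_0$, and the choice of $r_i$ forces \emph{all} $k$ spheres $\partial\widetilde B_i$ to pass through the two common points $x_0$ and $x_0+2re_n$ (similarly $\partial\widehat B_i$ through $x_0$ and $x_0-2re_n$), so no analogue of Lemma~\ref{n ball inters} is needed and there is no restriction on $k$. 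Forming $\widetilde B,\widehat B$ from these balls via the same combinatorial relation as \eqref{eq-Domaincomponet} gives $\widetilde B\subset\Omega\subset\widehat B$ because each sphere lies on the correct side of the corresponding $C^{1,1}$ surface $S_i$ (tangent at $x_0$ with $r$ small). The key simplification over Theorem~\ref{opytimal estimate nd-nface} is that, since the spheres are tangent to the very faces $P_i$ of $V_{x_0}$ at $x_0$, the conformal map $T_r$ of \eqref{eq-ConformalTransform} sends $\widetilde B$ onto a \emph{translate of $V_{x_0}$ itself}, not a nearby cone; hence $\widetilde v$ is literally $v$ (shifted), and no appeal to Section~\ref{sec-DifferentCones} is needed. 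One then uses $T_r x=T_r x_0+\tfrac12(x-x_0)+O(|x-x_0|^{2})$ together with the scaling of $v$ and Lemma~\ref{lemma-GradientEstimates} (valid everywhere in the cone, cf.\ Remark~\ref{rmrk-delta}) to get $\widetilde u(x)=v(x)(1+O(\delta^{-1}\rho))$, which is your final estimate. Lemma~\ref{lemma-TangentCones} enters only to check $x\in\widetilde B$ and $\mathrm{dist}(x,\partial V_{x_0})>\tfrac{\delta}{2}\rho$.
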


\begin{proof}
We fix an $x\in \Omega$ near $x_0\in \partial\Omega$ with 
$\textrm{dist}(x,\partial \Omega) >\delta |x-x_0|$. We denote by $\nu_i$ the
interior unit normal vector of $S_i$ at $x_0$. Then, by \eqref{eq-FiniteCone}, 
\begin{equation}\label{cone-confition}\langle\nu_i, e_n\rangle > \sin \theta_0.\end{equation}
For some constant $r>0$, set 
$$r_i =\frac{r}{\langle\nu_i, e_n\rangle},$$
and 
$$\aligned 
\widetilde{B}_i&=B_{r_i}(x_0+r_i \nu_i),\\
\widehat{B}_i&=B_{r_i}(x_0-r_i \nu_i).\endaligned$$ 
By \eqref{cone-confition}, all $r_i$ are comparable.
Then, we have 
$$x_0,x_0+2re_n\in\bigcap_{i=1}^{k} \partial \widetilde{B}_i,$$ 
and 
$$x_0,x_0-2re_n\in\bigcap_{i=1}^{k}\partial  \widehat{B}_i.$$
For some constant $r$ depending only on $R$ and the $C^{1,1}$-norm of $S_i$,
we note that each ball
$\widetilde{B}_i$ is above the corresponding hypersurface $S_i$, although it is not necessarily in $\Omega$,
and that each ball $\widehat{B}_i$ is below the corresponding hypersurface $S_i$.

For some constant $R>0$, $\Omega  \bigcap B_{R}(x_0)$
can be expressed by the union of some $\Omega_{(l_1,\cdots,l_k)}$, i.e.,
\begin{equation}\label{eq-Domaincomponet3}\Omega= \bigcup \Omega_{(l_1,\cdots, l_k)}, \end{equation}
where the union is over a finite set of vectors $(l_1,\cdots,l_k)$,  with $l_i=1$ or $-1$
for each $i=1,\cdots,k$.

With $\widetilde{B}_i$ replacing $B_{1}^{n}(o_i)$,
we can define $\widetilde{B}_{(l_1,\cdots,l_k)}$ as in \eqref{eq-definition-tildeB}, for any
$(l_1,\cdots,l_k)$  with $l_i=1$ or $-1$
for each $i=1,\cdots,k$.
Then, we set
\begin{equation}\label{eq-interiorballcomponet3}\widetilde{B}= \bigcup \widetilde{B}_{(l_1,\cdots,l_k)}, 
\end{equation}
where the union is over the same set of vectors
$(l_1,\cdots,l_k)$ as in \eqref{eq-Domaincomponet3}. We note that $\widetilde B$ is a Lipschitz domain.

Similarly, with $\widehat{B}_i$ replacing  $B_{1}^{n}(o'_i)$,
we can define $\widehat{B}_{(l_1,\cdots,l_k)}$ as in \eqref{eq-definition-hatB}, for any
$(l_1,\cdots,l_k)$  with $l_i=1$ or $-1$
for each $i=1,\cdots,k$.
Then, we set
\begin{equation}\label{eq-exteriorballcomponet3}\widehat{B}= \bigcup \widehat{B}_{(l_1,\cdots,l_k)}, \end{equation}
where the union is over the same set of vectors
$(l_1,\cdots,l_k)$ as in \eqref{eq-Domaincomponet3}. Similarly, $\widehat B$ is a Lipschitz domain.

For some small constants $r$ and $r^*$ depending only on the
geometry of $\partial\Omega$, we have 
$\widetilde{B} \subseteq \Omega$ and $ \Omega\subseteq \widehat{B}$ if $|x-x_0|\leq r^*$. 
By Lemma \ref{lemma-TangentCones}, we have, for $x \in \widetilde{B}$ with 
$|x-x_0|$ small,
$$\textrm{dist}( x, \partial\widetilde{B})\geq \textrm{dist}( x,\partial \Omega) -C|x-x_0|^2,$$ and 
$$\textrm{dist}( x, \partial\widehat{B})\leq \textrm{dist}( x,\partial \Omega) +C|x-x_0|^2.$$ 

Let $\widetilde{u}$ be the solution of \eqref{eq-MainEq}-\eqref{eq-MainBoundary}
for $\Omega = \widetilde{B}$. By the maximum principle,
we have $u \leq\widetilde{u}$ in $\widetilde{B}$. In particular, we have $u(x) \leq\widetilde{u}(x)$.

By a translation and a rotation, we assume
$x_0=(r,0,\cdots,0)$, and $x_0+2re_n=(-r,0,\cdots,0)$.
Let $T_{r}$ be the conformal transform given by \eqref{eq-ConformalTransform}, 
with $r$ replacing $a$, and let $\widetilde{v}$ be the solution of 
\eqref{eq-MainEq}-\eqref{eq-MainBoundary}
for $\Omega =\widetilde{V}=T_{r}(\widetilde{B})$. 
Then, $T_{r}(\widetilde{B})=V_{x_0}-re_1$  and 
\begin{align*}\widetilde{u}(x)
=\widetilde{v}(T_{r}x)\left(\frac{2r^2}{r^2+2rx_1+|x|^2}\right)^{\frac{n-2}{2}}.
\end{align*}
Note
$$\left(\frac{\partial T_{r}}{\partial x}(x_0)\right)
=\frac{1}{2}I_{n\times n}.
$$ 
Hence, 
\begin{align*}
|(T_{r}x-T_rx_0)-\frac{1}{2}(x-x_0)|=O(|x-x_0|^2 ).
\end{align*}
Note, for $|x-x_0|$ small,  
$$\textrm{dist}(x,\partial V_{x_0})>\frac{\delta}{2}|x-x_0|.$$ 
Then, by Lemma \ref{lemma-GradientEstimates}, we have
\begin{equation*}
|\widetilde{v}(T_{r}x)-2^{\frac{n-2}{2}}v(x)|\leq C\delta^{-1}v(x)|x-x_0|.\end{equation*}
Therefore,
\begin{equation}\label{u leq-main2}
u(x)\leq \widetilde{u}(x) = \widetilde{v}(T_{r}x)
\left[\frac{2r^2}{r^2+2rx_1+|x|^2}\right]^{\frac{n-2}{2}}
\leq v(x)(1+C\delta^{-1}|x-x_0|).
\end{equation}

Similarly, we can prove
\begin{equation}\label{u geq-main2}
u(x) \geq  v(x) (1-C\delta^{-1}|x-x_0|).
\end{equation}
Combining \eqref{u leq-main2} and \eqref{u geq-main2}, we have the desired result.
\end{proof}

We note that the constant $C$ in Theorem \ref{opytimal estimate general domain a b} 
depends on the $C^{1,1}$-norms of $S_1,\cdots, S_k$ but independent of the number of hypersurfaces. 
Appropriately modified, Theorem \ref{opytimal estimate general domain a b} allows us to discuss asymptotic
expansions near singular points of other types. For example, if $V_{x_0}$ 
has an isolated singularity (at its vertex), we can approximate $V_{x_0}$ by a sequence of cones $V_k$ such that 
the number of faces of $V_k$ approaches the infinity as $k\to\infty$
and the $C^{1,1}$-norms of $\partial V_k$ remain uniformly bounded. 
As a consequence, a result similar as Theorem \ref{opytimal estimate general domain a b} holds 
for this class of domains. 

\end{document}